

\documentclass[a4paper,oneside,11pt]{article} 


\addtolength\topmargin{-.5in} 

\usepackage[bottom]{footmisc}

\usepackage{a4wide}

\usepackage{amssymb}
\usepackage{tikz}
\usepackage{amsmath}

\usepackage{comment}

\usepackage{nameref}

\usepackage{bbm}

\usepackage{enumitem, hyperref}
\makeatletter
\def\namedlabel#1#2{\begingroup
    #2%
    \def\@currentlabel{#2}%
    \phantomsection\label{#1}\endgroup
}
\makeatother

\usepackage[all,cmtip]{xy}
\usepackage{tikz-cd}
\usepackage{graphicx}

\usepackage[all]{xy}

\usepackage{mathtools}
\DeclarePairedDelimiter\ev{\langle}{\rangle}


\tolerance=1414 
\setlength\emergencystretch{1.5em} 
\hbadness=1414 
\setlength\hfuzz{.3pt} 
\widowpenalty=10000 
\raggedbottom 
\setlength\vfuzz{.3pt} 


\usepackage[utf8]{inputenc} 
\usepackage[T1]{fontenc} 
\usepackage{textcomp} 
\usepackage[english]{babel}
\usepackage[autolanguage]{numprint} 
\usepackage{hyperref} 
\usepackage{graphicx} 
\usepackage{verbatim} 


\usepackage{amsmath,amssymb,amsthm} 
\usepackage{comment}



\usepackage{fancyhdr} 
\fancyhf{} 
\fancyhead[R]{MW-homotopy sheaves and Morel generalized transfers}
\fancyhead[L]{\sc Niels Feld}   
\fancyfoot[C]{\thepage} 
\pagestyle{fancy} 


\renewcommand\[{\begin{equation}}\renewcommand\]{\end{equation}} 
\renewcommand\epsilon\varepsilon 
\renewcommand\phi\varphi 


\newenvironment{customthm}[1]
  {\innercustomthm}
  {\endinnercustomthm}

\newcommand\NNN{\mathbb{N}} 
\newcommand\NN{\mathcal{N}}
\newcommand\TT{\mathcal{T}}
\newcommand\ZZ{\mathbb{Z}} 
\newcommand\QQ{\mathbb{Q}} 
\newcommand\ab\allowbreak 

\newcommand\GW{\operatorname{GW}}

\newcommand\Spec{\operatorname{Spec}}
\newcommand\M{M_{-1}}
\newcommand\HIfr{\mathbf{HI}^{\operatorname{fr}}}
\newcommand\HIgtr{\mathbf{HI}^{\operatorname{gtr}}}
\newcommand\MQ{M_{-1,\QQ}}
\newcommand\HIMW{\mathbf{HI}^{\operatorname{MW}}}
\newcommand\Hp{\mathbf{H}_{\bullet}}
\newcommand\HI{\mathbf{HI}}
\newcommand\SHS{\mathbf{SH}^{S^1}}
\newcommand\DA{\mathbf{D}_{\AAA^1}}
\newcommand\DAeff{\mathbf{D}^{\operatorname{eff}}_{\AAA^1}}

\newcommand\colim{\operatorname{colim}}

\newcommand\CHt{\widetilde{\operatorname{CH}}}

\newcommand\Hominter{\underline{\mathcal{H}om}}
\newcommand\Hom{\operatorname{Hom}}

\newcommand\Tr{\operatorname{Tr}}
\newcommand\tr{\operatorname{tr}}

\newcommand\res{\operatorname{res}}

\newcommand\Id{\operatorname{Id}}

\newcommand\FFF{\mathcal{F}}

\newcommand\HM{\bold{HM}}

\newcommand\Sm{\operatorname{Sm}_k}

\newcommand\Cortilde{\widetilde{\operatorname{Cor}}}

\newcommand\eeta{\boldsymbol{\eta}}
\newcommand\Om{\operatorname{\Omega}}
\newcommand\Ab{\mathcal{A}b}

\newcommand\KMW{\underline{\operatorname{K}}^{MW}}

\newcommand\AAA{\mathbb{A}}

\providecommand{\keywords}[1]
{
  \small	
  \textbf{\textit{Keywords---}} #1
}

\providecommand{\Codes}[1]
{
  \small	
  \textbf{\textit{MSC---}} #1
}

\newcommand\kMW{\mathbf{K}^{\text{MW}}}

\newcommand\OO{\mathcal{O}}
\newcommand\LL{\mathcal{L}}

\newcommand\LLL{\mathfrak{L}}

\newcommand\Gm{\mathbb{G}_m}
\newcommand\SH{\mathbf{SH}}
\newcommand\PP{\mathbb{P}}

\newcommand\Grp{\mathbf{Grp}}

\theoremstyle{definition} 
\newtheorem{Def}{Definition}[subsection] 
\theoremstyle{plain} 
\newtheorem{Pro}[Def]{Proposition} 
\newtheorem{Lem}[Def]{Lemma} 
\newtheorem{The}[Def]{Theorem} 
\newtheorem{Conj}[Def]{Conjecture} 
\newtheorem{Claim}[Def]{Claim} 
\newtheorem{Cor}[Def]{Corollary} 
\theoremstyle{remark} 
\newtheorem{Exe}[Def]{Example} 
\newtheorem{Rem}[Def]{Remark} 
\newtheorem{Par}[Def]{} 


\title{MW-homotopy sheaves and Morel generalized transfers} 
\author{\sc Niels Feld\footnote{Adress: Institut Fourier, 100 Rue des Mathématiques, Grenoble, France.}
\footnote{E-mail adress: <niels.feld@univ-grenoble-alpes.fr>.}  } 

\date{2020} 


\begin{document} 

\maketitle 


\begin{abstract} We explore a conjecture of Morel about the Bass-Tate transfers defined on the contraction of a homotopy sheaf \cite{Mor12} and prove that the conjecture is true with rational coefficients. Moreover, we study the relations between (contracted) homotopy sheaves, sheaves with Morel generalized transfers and MW-homotopy sheaves, and prove an equivalence of categories. As applications, we describe the essential image of the canonical functor that forgets MW-transfers and use theses results to discuss the {\it conservativity conjecture} in $\AAA^1$-homotopy due to Bachmann and Yakerson \cite[Conjecture 1.1]{BachmannYakerson18}.
\end{abstract}

\keywords{Homotopy Sheaves, Milnor-Witt K-theory, Chow-Witt groups, A1-homotopy}

\Codes{14C17, 14C35, 11E81}
\tableofcontents

\section{Introduction}
\subsection{Current work}

\par In \cite{Mor12}, Morel studied homotopy invariant Nisnevich sheaves in order to provide computational tools in $\AAA^1$-homotopy analogous to Voevodsky's theory of sheaves with transfers. The most basic result is that (unramified) sheaves are characterized by their sections on fields and some extra data (see Subsection \ref{SubsectionUnramifiedSheaves}). One of the main theorem of \cite{Mor12} is the equivalence between the notions of {\em strongly $\AAA^1$-invariant} and {\em strictly $\AAA^1$-invariant} for sheaves of abelian groups (see {\em loc. cit.} Theorem 1.16). In order to prove this, Morel defined geometric transfers on the contraction $\M$ of a homotopy sheaf (i.e. a strongly $\AAA^1$-invariant Nisnevich sheaf of abelian groups). The definition is an adaptation of the original one of Bass and Tate for Milnor K-theory \cite{BassTate73}. Morel proved that the transfers were functorial (i.e. they do not depend on the choice of generators) for any two-fold contraction $M_{-2}$ of a homotopy sheaf and conjectured that the result should hold for $\M$ (see Conjecture \ref{ConjectureMoinsUn} or \cite[Remark 4.31]{Mor12}).

\par The notion of {\em sheaves with generalized transfers} was first defined in \cite[Definition 5.7]{Mor11} as a way to formalize the different structures naturally arising on some homotopy sheaves. In Section \ref{SectionSheavesWithGeneralizedTransfers}, we give a slightly modified definition of {\em sheaves with generalized transfers} which takes into account twists by the usual line bundles. Following \cite[Chapter 5]{Mor12}, we define the Rost-Schmid complex associated to such homotopy sheaves and study the usual pushforward maps $f_*$, pullback maps $g^*$, $\GW$-action $\ev{a}$ and residue maps $\partial$. Moreover, we prove the following theorem.
\begin{customthm}{1} [see Theorem \ref{MWModuleFM}]
 
 Let $M\in \HIgtr(k)$ be a homotopy sheaf with generalized transfers. The presheaf $\tilde{\Gamma}_*(M)$ of abelian groups, defined by
 
 \begin{center}
 $\tilde{\Gamma}_*(M)(X)=A^0(X,M\otimes(\LLL_{X/k})^{\vee})$
 
 \end{center}
 for any smooth scheme $X/k$, is a MW-homotopy sheaf canonically isomorphic to $M$ as presheaves. 
 \end{customthm}
In Section \ref{SectionMorelConjecture}, we recall the construction of the Bass-Tate transfer maps on a contracted homotopy sheaf $\M$ and prove that this defines a structure of generalized transfers: 
\begin{customthm}{2}[see Theorem \ref{HIMoinsUntogtr}]
Let $M\in \HI(k)$ be a homotopy sheaf. Then:
\begin{enumerate}
\item Assume that $2$ is invertible. The rational contracted homotopy sheaf $M_{-1,\QQ}$ is a homotopy sheaf with generalized transfers.
\item Assuming Conjecture \ref{ConjectureMoinsUn}, the contracted homotopy sheaf $\M$ is a homotopy sheaf with generalized transfers.
\end{enumerate}
\end{customthm}
In particular, we obtain the following intersection multiplicity formula which was left open in \cite{Fel18}:
\begin{customthm}{3}[see Theorem \ref{R1c_fort}]
Let $M\in \HI(k)$ be a homotopy sheaf. Consider $\phi:E\to F$ and $\psi:E\to L$ with $\phi$ finite. Let $R$ be the ring $F\otimes_E L$. For each $\mathfrak{p}\in \Spec R$, let $\phi_\mathfrak{p}:L\to R/\mathfrak{p}$ and $\psi_\mathfrak{p}:F\to R/\mathfrak{p}$ be the morphisms induced by $\phi$ and $\psi$. One has
\begin{center}
$\M(\psi)\circ \Tr_{\phi}=\displaystyle \sum_{\mathfrak{p}\in \Spec R}e_{\mathfrak{p},\epsilon} \Tr_{\phi_\mathfrak{p}}\circ \M(\psi_\mathfrak{p})$
\end{center}
where $e_{\mathfrak{p},\epsilon}=\sum_{i=1}^{e_\mathfrak{p}}\ev{-1}^{i-1}$ is the quadratic form associated to the length $e_{\mathfrak{p}}$ of the localized ring $R_{(\mathfrak{p})}$.

\end{customthm}

Generalizing ideas of Voevodsky, Calmès and Fasel introduced the additive symmetric monoidal category $\Cortilde_k$ of smooth $k$-schemes with morphisms given by the so-called {\em finite Milnor-Witt correspondences} (see \cite[Chapter 2]{BCDFO}). In Section \ref{SectionMWHomotopySheaves}, we recall the basic definitions regarding this theory and prove that any homotopy sheaf with MW-transfers has a structure of sheaf with generalized transfers. More precisely, we show that the two notions coincides:

\begin{customthm}{4}[Theorem \ref{EquivalenceHIMWandHIgtr}] There is a pair of functors
\begin{center}
$\xymatrix{
\HIMW(k)
\ar@<1ex>[r]^{\tilde{\Gamma}^*}
&
\HIgtr(k)
\ar@<1ex>[l]^{\tilde{\Gamma}^*}
}$
\end{center}
that forms an equivalence between the category of homotopy sheaves with MW-transfers and the category of homotopy sheaves with generalized transfers.
\end{customthm}
In Section \ref{SectionApplications}, we prove the following theorem that characterize the essential image of the functor ${\tilde{\gamma}_*:\HIMW(k)\to \HI(k)}$ that forgets MW-transfers. 

\begin{customthm}{5}[Theorem \ref{EssImageTransfers}]
Let $M\in \HI(k)$ be a homotopy sheaf. The following assertions are equivalent:
\begin{enumerate}
\item[(i)]  There exists $M'\in \HI(k)$ satisfying Conjecture
 \ref{ConjectureMoinsUn} and such that $M\simeq M'_{-1}$.
\item[(ii)] There exists a structure of generalized transfers on $M$.
\item[(iii)] There exists a structure of MW-transfers on $M$.
\item[(iv)] There exists $M''\in \HI(k)$ such that $M\simeq M''_{-2}$. 
\end{enumerate}
\end{customthm}

This result is linked with the conservativity conjecture from \cite{BachmannYakerson18} and allows us to prove the following theorems.

\begin{customthm}{6}[Corollary \ref{RationalBYconjecture}]Let $d>0$ be a natural number. The Bachmann-Yakerson conjecture holds (integrally) for $d=2$ and rationally for $d=1$: namely, the canonical functor
 \begin{center}
$\SHS(k)(2) \to \SH(k)$
\end{center}
is conservative on bounded below objects, the canonical functor
 \begin{center}
$\SHS(k)(1) \to \SH(k)$
\end{center}
is conservative on rational bounded below objects, and the canonical functor
\begin{center}
$\HI(k,\QQ)(1)\to \HIfr(k,\QQ)$
\end{center}
is an equivalence of abelian categories.
\par Moreover, let $\mathcal{X}$ be a pointed motivic space. Then the canonical map
\begin{center}
$\underline{\pi}_0\Omega^{d}_{\PP^1}\Sigma^{d}_{\PP^1}\mathcal{X}\to
\underline{\pi}_0\Omega^{d+1}_{\PP^1}\Sigma^{d+1}_{\PP^1}\mathcal{X}$
\end{center}
is an isomorphism for $d=2$.
\end{customthm}

\begin{customthm}{7}[Corollary \ref{EquivalenceHIMWandHIfr}]
The category of homotopy sheaves with generalized transfers, the category of MW-homotopy sheaves and the category of homotopy sheaves with framed transfers are equivalent:
\begin{center}
$\HIgtr(k)\simeq \HIMW(k) \simeq \HIfr(k)$.
\end{center}
\end{customthm}

\subsection*{Outline of the paper}

In Section \ref{SectionHomotopySheaves}, we follow \cite[Chapter 2]{Mor12} and recall the theory of unramified sheaves and how they are related to homotopy sheaves of abelian groups.
\par In Section \ref{SectionSheavesWithGeneralizedTransfers}, we define the notion of sheaves with generalized transfers and study the associated Rost-Schmid complex.
\par In Section \ref{SectionMorelConjecture}, we define the Bass-Tate transfer maps on a contracted homotopy sheaf $\M$ and prove the conjecture of Morel in the case of rational coefficients.
\par In Section \ref{SectionMWHomotopySheaves}, we recall the theory of sheaves with MW-transfers \cite{BCDFO} and prove that it is equivalent to the notion of sheaves with generalized transfers.
\par In Section \ref{SectionApplications}, we give some corollaries of Theorem \ref{EquivalenceHIMWandHIgtr}. In particular, we characterize the essential image of the functor ${\tilde{\gamma}_*:\HIMW(k)\to \HI(k)}$ that forgets MW-transfers and use the previous results to discuss the {\it conservativity conjecture} in $\AAA^1$-homotopy due to Bachmann and Yakerson (see \cite[Conjecture 1.1]{BachmannYakerson18} and \cite{Bach20}).

\subsection{Notation}\label{Notation}
Throughout the paper, we fix a (commutative) field $k$ and we assume moreover that $k$ is infinite perfect of characteristic not $2$. We need these assumptions in order to apply the cancellation theorem \cite[Chapter 4]{BCDFO} but we believe these restrictions could be lifted.
\par We denote by $\Grp$ and $\Ab$ the categories of (abelian) groups.
\par We consider only schemes that are essentially of finite type over $k$. All schemes and morphisms of schemes are defined over $k$. The category of smooth $k$-schemes of finite type is denoted by $\Sm$ and is endowed with the Nisnevich topology (thus, {\em sheaf} always means {\em sheaf for the Nisnevich topology}).
\par If $X$ is a scheme and $n$ a natural number, we denote by $X_{(n)}$ (resp. $X^{(n)}$) the set of point of dimension $n$ (resp. codimension $n$).
\par By a field $E$ over $k$, we mean {\em a $k$-finitely generated field $E$}. Since $k$ is perfect, notice that $\Spec E$ is essentially smooth over $S$. We denote by $\FFF_k$ the category of such fields.
\par Let $f:X\to Y$ be a morphism of schemes. Denote by $\LL_f$ (or $\LL_{X/Y}$) the virtual vector bundle over $Y$ representing the cotangent complex of $f$, and by $\LLL_f$ (or $\LLL_{X/Y}$) its determinant. Recall that if $p:X\to Y$ is a smooth morphism, then $\LL_p$ is (isomorphic to) $\TT_p=\Om_{X/Y}$ the space of (Kähler) differentials. If $i:Z\to X$ is a regular closed immersion, then $\LL_i$ is the normal cone $-\NN_ZX$. If $f$ is the composite $\xymatrix{ Y \ar[r]^i & \PP^n_X \ar[r]^p & X}$  with $p$ and $i$ as previously (in other words, if $f$ is lci projective), then $\LL_f$ is isomorphic to the virtual tangent bundle $i^*\TT_{\PP^n_X/X} - \NN_Y(\PP^n_X) $ (see also \cite[Section 9]{Fel18}).
\par Let $X$ be a scheme and $x\in X$ a point, we denote by $\LL_{x}=(\mathfrak{m}_x/\mathfrak{m}_x^2)^{\vee}$ and $\LLL_x$ its determinant. Similarly, let $v$ a discrete valuation on a field, we denote by $\LLL_{v}$ the line bundle $(\mathfrak{m}_v/\mathfrak{m}_v^2)^{\vee}$.
\par Let $E$ be a field (over $k$) and $v$ a valuation on $E$. We will always assume that $v$ is discrete. We denote by $\mathcal{O}_v$ its valuation ring, by $\mathfrak{m}_v$ its maximal ideal and by $\kappa(v)$ its residue class field. We consider only valuations of geometric type, that is we assume: $k\subset \mathcal{O}_v$, the residue field $\kappa(v)$ is finitely generated over $k$ and satisfies $\operatorname{tr.deg}_k(\kappa(v))+1=\operatorname{tr.deg}_k(E)$.

\par Let $E$ be a field. We denote by $\GW(E)$ the Grothendieck-Witt ring of symmetric bilinear forms on $E$. For any $a\in E^*$, we denote by $\ev{a}$ the class of the symmetric bilinear form on $E$ defined by $(X,Y)\mapsto aXY$ and, for any natural number $n$, we put $n_{\epsilon}=\sum_{i=1}^n \ev{-1}^{i-1}$.

\subsection*{Acknowledgment} I deeply thank my two PhD advisors Frédéric Déglise and Jean Fasel. Moreover, I would like to thank Tom Bachmann for precious comments on a draft.

\section{Homotopy sheaves} \label{SectionHomotopySheaves}

\subsection{Unramified sheaves} \label{SubsectionUnramifiedSheaves}
In this subsection, we summarize \cite[Chapter 2]{Mor12} and recall the basic results concerning unramified sheaves.

\begin{Def}
\begin{enumerate}
\item A sheaf of sets $\mathcal{S}$ on $\Sm$ is said to be $\AAA^1$-invariant if for any $X\in\Sm$, the map
\begin{center}
$\mathcal{S}(X)\to \mathcal{S}(\AAA^1_X)$
\end{center}
induced by the projection $\AAA^1\times X\to X$, is a bijection.
\item A sheaf of groups $\mathcal{G}$ on $\Sm$ is said to be {\em strongly $\AAA^1$-invariant} if, for any $X\in \Sm$, the map
\begin{center}
$H^i_{Nis}(X,\mathcal{G})\to H^i_{Nis}(\AAA^1\times X, \mathcal{G})$
\end{center} 
induced by the projection $\AAA^1\times X\to X$, is a bijection for $i\in \{0,1\}$.
\item A sheaf of abelian groups $M$ on $\Sm$ is said to be {\em strictly $\AAA^1$-invariant} if, for any $X\in \Sm$, the map
\begin{center}
$H^i_{Nis}(X,M)\to H^i_{Nis}(\AAA^1\times X,M)$
\end{center} 
induced by the projection $\AAA^1\times X\to X$, is a bijection for $i\in \NNN$.\end{enumerate}
\end{Def}

\begin{Rem} In the sequel, we work with $M$ a sheaf of groups. We could give more general definitions for sheaves of sets but, in practice, we need only the case of sheaves of abelian groups. In that case, we recall that a strongly $\AAA^1$-invariant sheaf of abelian groups is necessarily strictly $\AAA^1$-invariant (see \cite[Corollary 5.45]{Mor12}).
\end{Rem}

\begin{Def}
An unramified presheaf of groups $M$ on $\Sm$ is a presheaf of groups $M$ such that the following holds:
\begin{description}
\item[\namedlabel{itm:(0)}{(0)}] For any smooth scheme $X\in \Sm$ with irreducible components $X_\alpha$ ($\alpha\in X^{(0)}$), the canonical map $M(X)\to \prod_{\alpha \in X^{(0)}} M(X_{\alpha})$ is an isomorphism.
\item[\namedlabel{itm:(1)}{(1)}] For any smooth scheme $X\in \Sm$ and any open subscheme $U\subset X$ everywhere dense in $X$, the restriction map $M(X)\to M(U)$ is injective.
\item[\namedlabel{itm:(2)}{(2)}] For any smooth scheme $X\in \Sm$, irreducible with function field $F$, the injective map $M(X)\to \bigcap_{x\in X^{(1)}}M(\mathcal{O}_{X,x})$ is an isomorphism (the intersection being computed in $M(F)$).
\end{description}
\end{Def}

\begin{Exe}
Homotopy modules with transfers \cite{Deg10} and Rost cycle modules \cite{Rost96} define unramified sheaves. In characteristic not 2, the sheaf associated to the presheaf of Witt groups $X\to W(X)$ is unramified. 
\end{Exe}

We may give an explicit description of unramified sheaves on $\Sm$ in terms of their sections on fields $F\in \mathcal{F}_k$ and some extra structure. We will say that a function $M:\mathcal{F}_k\to \Grp$ is {\em continuous} if $M(F)$ is the filtering colimit of the groups $M(F_\alpha)$ where $F_\alpha$ run over the subfields of $F$ of finite type over $k$.
\begin{Def}[\cite{Mor12},Definition 2.6]
An unramified $\mathcal{F}_k$-datum consists of:
\begin{description}
\item[\namedlabel{itm:uD1}{uD1}]A continuous functor $M:\FFF_k \to \Grp$.
\item[\namedlabel{itm:uD2}{uD2}] For any field $F\in \FFF_k$ and any discrete valuation $v$ on $F$, a subgroup
\begin{center}
$M(\mathcal{O}_v)\subset M(F)$.
\end{center}
\item[\namedlabel{itm:uD3}{uD3}] For any field $F\in \FFF_k$ and any valuation $v$ on $F$, a map $s_v:M(\OO_v)\to M(\kappa(v))$, called the specialization map associated to $v$.

\end{description}
The previous data should satisfy the following axioms:
\begin{description}
\item[\namedlabel{itm:uA1}{uA1}] If $\iota: E\subset F$ is a separable extension in $\FFF_k$ and $w$ is a valuation on $F$ which restrict to a discrete valuation $v$ on $E$ with ramification index $1$, then the arrow $M(\iota)$ maps $M(\OO_v)$ into $M(\OO_w)$. Moreover, if the induced extension $\bar{\iota}:\kappa(v)\to \kappa(w)$ is an isomorphism, then the following square
\begin{center}
$\xymatrix{
M(\OO_v)
\ar[r]
\ar[d]
& 
M(\OO_w)
\ar[d]
\\
M(E)
\ar[r]
&
M(F)
}$
\end{center}
is cartesian.
\item[\namedlabel{itm:uA2}{uA2}] Let $X\in \Sm$ be an irreducible smooth scheme with function field $F$. If $x\in M(F)$, then $x$ lies in all but a finite number of $M(\OO_x)$ where $x$ runs over the set $X^{(1)}$ of points of codimension $1$.
\item[\namedlabel{itm:uA3(i)}{uA3(i)}] If $\iota:E\subset F$ is an extension in $\FFF_k$ and $w$ is a discrete valuation on $F$ which restricts to a discrete valuation $v$ on $F$, then $M(\iota)$ maps $M(\OO_v)$ into $M(\OO_w)$ and the following diagram
\begin{center}
$\xymatrix{
M(\OO_v)
\ar[r]
\ar[d]
&
M(\OO_w)
\ar[d]
\\
M(\kappa(v))
\ar[r]
&
M(\kappa(w))
}$
\end{center}
is commutative.

\item[\namedlabel{itm:uA3(ii)}{uA3(ii)}] If $\iota:E\subset F$ is an extension in $\FFF_k$ and $w$ a discrete valuation on $F$ which restricts to zero on $E$, then the map $M(\iota):M(E)\to M(F)$ has its image contained in $M(\OO_v)$. Moreover, if $\bar{\iota}:E\subset \kappa(w)$ denotes the induced extension, the composition 
$\xymatrix{
M(E)
\ar[r]
&
 M(\OO_v)
\ar[r]^{s_v}
&
M(\kappa(w))
}$ is equal to $M(\bar{\iota})$.
\item[\namedlabel{itm:uA4(i)}{uA4(i)}] For any smooth scheme $X\in \Sm$ local of dimension $2$ with closed point $z\in X^{(2)}$, and for any point $y_0\in X^{(1)}$ with $\bar{y}_0\in \Sm$, then $s_{y_0}:M(\OO_{y_0})\to M(\kappa(y_0)$ maps $\cap_{y\in X^{(1)}} M(\OO_{\bar{y}_0,z})$ into $M(\kappa(y_0))$.

\item[\namedlabel{itm:uA4(ii)}{uA4(ii)}] The composition
\begin{center}
$\bigcap_{y\in X^{(1)}} M(\OO_y)\to M(\OO_{\bar{y}_0,z})\to M(\kappa(z))$
\end{center} 
does not depend on the choice of $y_0$ such that $\bar{y}_0\in \Sm$.
\end{description}

\begin{Par}
An unramified sheaf $M$ defines in an obvious way an unramified $\mathcal{F}_k$-datum. Indeed, taking the evaluation on the field extensions of $k$ yields a restriction functor:
\begin{center}
$M:\mathcal{F}_k \to \Grp, F\mapsto M(F)$
\end{center}
such that, for any field $F$ with valuation $v$, we have an $M(\OO_v)\subset M(F)$ and a specialization map $s_v:M(\OO_v)\to M(\kappa(v))$ (obtained by choosing smooth models over $k$ for the closed immersion $\Spec \kappa(v) \to \Spec \OO_v$). We claim that this satisfies axioms \ref{itm:uA1},...,\ref{itm:uA4(ii)}.
\par Reciprocally, given an unramified $\FFF_k$-datum $M$ and $X\in \Sm$ an irreducible smooth scheme with function field $F$, we define the subset $M(X)\subset M(F)$ as the intersection $\bigcap_{x\in X^{(1)}} M(\OO_x)\subset M(F)$. We extend the definition for any $X$ so that property \ref{itm:(0)} is satisfied. Using the fact that any map $f:Y\to X$ between smooth schemes is the composition
$\xymatrix{
Y
\ar@{^{(}->}[r]
&
Y\times_k X
\ar@{->>}[r]
&
X}$
of closed immersion followed by a smooth projection, one can define an unramified sheaf $M:\Sm \to \Grp$. In short, we have the following theorem.
\end{Par}

\begin{The}\cite[Theorem 2.11]{Mor12}
The two functors described above define an equivalence between the category of unramified sheaves on $\Sm$ and that of unramified $\FFF_k$-data.
\end{The}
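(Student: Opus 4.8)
The plan is to exhibit the two functors as mutually quasi-inverse, which breaks into three tasks: checking that the assignment (sheaf $\mapsto$ datum) lands in unramified $\FFF_k$-data, checking that the assignment (datum $\mapsto$ presheaf) produces a genuine unramified Nisnevich sheaf, and finally identifying the two composites with the respective identities. The first task is the lightest. Given an unramified sheaf $M$, the continuity of $F \mapsto M(F)$ follows from the fact that every field in $\FFF_k$ is a filtered colimit of its finite-type subfields together with the commutation of $M$ with such colimits; the subgroups $M(\OO_v) \subset M(F)$ and the specialization maps $s_v$ are produced by choosing a smooth model for $\Spec\kappa(v) \to \Spec\OO_v$, and one verifies independence of the model using properties \ref{itm:(1)} and \ref{itm:(2)}. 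Axioms \ref{itm:uA1} through \ref{itm:uA4(ii)} then translate directly into the functoriality and descent properties of $M$; in particular the cartesian square in \ref{itm:uA1} is exactly the elementary Nisnevich square condition in the relevant local situation.

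The substance of the proof is the reverse construction. Given a datum, I set $M(X) = \bigcap_{x \in X^{(1)}} M(\OO_x) \subset M(F)$ for $X$ irreducible with function field $F$ (axiom \ref{itm:uA2} guaranteeing that this is a sensible intersection), and extend to arbitrary $X$ so that \ref{itm:(0)} holds; properties \ref{itm:(1)} and \ref{itm:(2)} are then immediate from the definition as an intersection of subgroups of $M(F)$. The real work is to define a functorial pullback $f^*$ for an arbitrary morphism $f : Y \to X$ in $\Sm$. I first factor $f$ as the closed immersion $Y \hookrightarrow Y \times_k X$ followed by the smooth projection $Y \times_k X \twoheadrightarrow X$, so it suffices to treat these two cases. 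For a smooth projection $p : W \to X$ I use the induced extension of function fields $k(X) \hookrightarrow k(W)$ and check, point by point in $W^{(1)}$, that the image of $\bigcap_x M(\OO_x)$ lands in $\bigcap_w M(\OO_w)$: a codimension-one point of $W$ either dominates a codimension-one point of $X$, handled by \ref{itm:uA3(i)}, or lies over the generic point of $X$, handled by \ref{itm:uA3(ii)}. The delicate case is the closed immersion, where $f^*$ is built from the specialization maps $s_v$ iterated along a flag of regular subschemes cutting out $Y$, and the content of \ref{itm:uA4(i)} and \ref{itm:uA4(ii)} is precisely that this iterated specialization is independent of the chosen flag.

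I then verify that these pullbacks compose correctly, so that $M$ is an honest presheaf; here the compatibilities packaged in \ref{itm:uA3(i)} and \ref{itm:uA3(ii)} again do the work, together with the reduction of a general composite to the two basic cases. That $M$ is a Nisnevich sheaf is reduced, via the fact that an unramified presheaf satisfies descent as soon as it satisfies descent for elementary distinguished squares, to the cartesian square of \ref{itm:uA1}. Finally, the two composites are identified with the identities: starting from a sheaf, passing to its datum and rebuilding the sheaf returns $\bigcap_{x} M(\OO_x) = M(X)$, which is exactly property \ref{itm:(2)}; starting from a datum, building the sheaf and re-extracting the datum recovers the original values on fields by continuity, and the subgroups and specialization maps by construction. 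Naturality of both identifications is routine.

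The principal obstacle is the well-definedness of the pullback along a closed immersion in the reverse construction, namely the independence of the iterated specialization map on the chosen flag of regular parameters. This is a genuinely two-dimensional phenomenon and is the reason the datum must include the dimension-two axioms \ref{itm:uA4(i)} and \ref{itm:uA4(ii)}; reducing the general case to this local two-dimensional statement, and checking the sheaf condition through \ref{itm:uA1}, are where essentially all the effort goes.
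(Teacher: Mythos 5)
Your proposal is correct and follows essentially the same route as the paper, which itself only sketches the two functors and defers to Morel's proof of \cite[Theorem 2.11]{Mor12}: evaluation on fields with smooth models for the specialization maps in one direction, the intersection $\bigcap_{x\in X^{(1)}}M(\OO_x)$ with functoriality via the factorization into a closed immersion followed by a smooth projection in the other, \ref{itm:uA3(i)}--\ref{itm:uA3(ii)} handling the projection, \ref{itm:uA4(i)}--\ref{itm:uA4(ii)} giving flag-independence of the iterated specialization, \ref{itm:uA1} giving Nisnevich descent, and \ref{itm:(2)} closing the round trip. You have correctly identified the well-definedness of the closed-immersion pullback as the crux.
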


\end{Def}

\begin{Par}
From now on, we will not distinguish between the notion of unramified sheaves on $\Sm$ and that of unramified $\FFF_k$-datum. In the remaining subsection, we fix $M$ an unramified sheaf of groups on $\Sm$ and explain how it is related to strongly $\AAA^1$-invariant sheaves.
\end{Par}
\begin{Par}{\sc Notation} If $\phi:E\to F$ is an extension of fields, the map
\begin{center}
$M(\phi):M(E)\to M(F)$
\end{center}
is also denoted by $\res_{\phi}$, $\res_{F/E}$ or $\phi_*$.
\end{Par}
\begin{Par}
Let $F\in \FFF_k$ be a field and $v$ a (discrete) valuation on $F$. We define the pointed set
\begin{center}
$H^1_v(\OO_v;M)=M(F)/M(\OO_v)$.
\end{center}
This is a left $M(F)$-set. Moreover, for any point $y$ of codimension $1$ in $X\in \Sm$, we set $H^1_v(X,;M)=H^1_y(\OO_{X,y};M)$. By axiom \ref{itm:uA2}, if $X$ is irreducible with function field $F$, the induced left action of $M(F)$ on $\prod_{y\in X^{(1)}}H^1_y(X,M)$ preserves the weak-product
\begin{center}
$\prod_{y\in X^{(1)}}'H^1_y(X,M) \subset \prod_{y\in X^{(1)}}H^1_y(X,M)$
\end{center}
where the weak-product $\prod_{y\in X^{(1)}}'H^1_y(X,M)$ means the set of families for which all but a finite number of terms are the base point of $H^1_y(X,M)$. By definition and axiom \ref{itm:uA2}, the isotropy subgroup of this action of $M(F)$ on the base point of $\prod_{y\in X^{(1)}}'H^1_y(X,M)$ is exactly $M(X)=\cap_{y\in X^{(1)}}M(\OO_{X,y})$. We summarize this property by saying that the diagram
\begin{center}
$
1
\to
M(X)
\to
M(F)
\Rightarrow
\prod_{y\in X^{(1)}}'H^1_y(X,M)$
\end{center}
is exact.
\end{Par}
\begin{Def} For any point $z$ of codimension $2$ in a smooth scheme $X$, we denote by $H_z^2(X,M)$ the orbit set of $\prod_{y\in X_z^{(1)}}'H^1_y(X,M)$ under the left action of $M(F)$ where $F\in \FFF_k$ is the function field of $X_z$.
\end{Def}

\begin{Par}
For an irreducible essentially smooth scheme $X$ with function field $F$, we define the boundary $M(F)$-equivariant map
\begin{center}
$\prod_{y\in X^{(1)}}'H^1_y(X,M) \to \prod_{z\in X^{(2)}}H^2_z(X,M)$
\end{center}
by collecting together the compositions
\begin{center}
$\prod_{y\in X^{(1)}}'H^1_y(X,M)
\to 
\prod_{y\in X_z^{(1)}}'H^1_y(X,M)
\to
H^2_z(X,M)$
\end{center}
for each $z\in X^{(2)}$.
\par It is not clear in general whether or not the image of the boundary map is always contained in the weak product $\prod_{z\in X^{(2)}}'H^2_z(X,M)$. For this reason we introduce the following axiom:
\begin{description}
\item[\namedlabel{itm:uA2'}{uA2'}] For any irreducible essentially smooth scheme $X$, the image of the boundary map
\begin{center}
$\prod_{y\in X^{(1)}}'H^1_y(X,M) \to \prod_{z\in X^{(1)}}H^2_z(X,M)$
\end{center}
is contained in the weak product $\prod_{z\in X^{(2)}}'H^2_z(X,M)$.
\end{description}
\end{Par}
\begin{Par}
From now on we assume that $M$ satisfies \ref{itm:uA2'}. For any smooth scheme $X$ irreducible with function field $F$, we have a complex $C^*(X,M)$
\begin{center}
$1\to M(X) \to M^{(0)}(X) \Rightarrow M^{(1)}(X) \to M^{(2)}(X)$
\end{center}
where $M^{(0)}(X)=\prod_{x\in X^{(0)}}'M(\kappa(x))=\prod_{x\in X^{(0)}}M(\kappa(x))$, $M^{(1)}(X)=\prod_{y\in X^{(1)}}'H^1_y(X,M)$ and $M^{(2)}(X)=\prod_{z\in X^{(2)}}'H^2_z(X,M)$. By construction, this complex is exact (in an obvious sense, see \cite[Definition 2.20]{Mor12}) for any (essentially) smooth local scheme of dimension $\leq 2$.
\end{Par}
\begin{Def}
A strongly unramified $\FFF_k$-data is an unramified $\FFF_k$-data satisfying \ref{itm:uA2'} and the following axioms on $M$:
\begin{description}
\item[\namedlabel{itm:uA5(i)}{uA5(i)}] For any separable finite extension $\iota:E\subset F$ in $\FFF_k$, any discrete valuation $w$ on $F$ which restricts to a discrete valuation $v$ on $E$ with ramification index $1$, and such that the induced extension $\bar{\iota}:\kappa(v)\to \kappa(w)$ is an isomorphism, the commutative square of groups
\begin{center}
$\xymatrix{
M(\OO_v)
\ar[r]
\ar[d]
&
M(E)
\ar[d]
\\
M(\OO_w)
\ar[r]
&
M(F)
}$
\end{center}
induces a bijection $H^1_w(\OO_w,M)\simeq H^1_v(\OO_v,M)$.
\item[\namedlabel{itm:uA5(ii)}{uA5(ii)}]
For any étale morphism $X'\to X$ between smooth local $k$-schemes of dimension $2$, with closed point respectively $z'$ and $z$, inducing an isomorphism on the residue fields $\kappa(z)\simeq \kappa(z')$, the pointed map
\begin{center}
$H^2_z(X,M)\to H^2_{z'}(X',M)$
\end{center}
has trivial kernel.
\item[\namedlabel{itm:uA6}{uA6}] For any localization $U$ of a smooth $k$-scheme at some point $u$ of codimension $\leq 1$, the complex:
\begin{center}
$1\to M(\AAA^1_U) \to M^{(0)}(\AAA^1_U) \Rightarrow M^{(1)}(\AAA^1_U) \to M^{(2)}(\AAA^1_U)$
\end{center}
is exact. Moreover, the morphism $M(U)\to M(\AAA^1_U)$ is an isomorphism.
\end{description}
\end{Def}

\begin{The}\cite[Theorem 2.27]{Mor12} \label{EquivalenceHomotopySheaves}
There is en equivalence between the category of strongly $\AAA^1$-invariant sheaves of groups on $\Sm$ and that of strongly unramified $\FFF_k$-data of groups on $\Sm$.
\end{The}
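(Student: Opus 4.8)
The plan is to upgrade the equivalence of \cite[Theorem 2.11]{Mor12} between unramified sheaves and unramified $\FFF_k$-data to the two designated full subcategories, keeping the very same pair of functors: to a strongly $\AAA^1$-invariant sheaf $M$ one associates its restriction to fields together with the subgroups $M(\OO_v)\subset M(F)$ and the specialization maps $s_v$, and to a datum one associates the unramified sheaf $X\mapsto \bigcap_{x\in X^{(1)}}M(\OO_x)$. Since strongly $\AAA^1$-invariant sheaves (resp.\ strongly unramified data) form \emph{full} subcategories of unramified sheaves (resp.\ unramified data), full faithfulness of the restricted pair is inherited for free from \cite[Theorem 2.11]{Mor12}. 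The entire content therefore reduces to matching objects, i.e.\ to the two essential-surjectivity statements: (A) the datum attached to a strongly $\AAA^1$-invariant sheaf is strongly unramified, and (B) the sheaf attached to a strongly unramified datum is strongly $\AAA^1$-invariant.

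For (A), I would first check that a strongly $\AAA^1$-invariant sheaf of groups is unramified, i.e.\ that it satisfies \ref{itm:(0)}, \ref{itm:(1)} and \ref{itm:(2)}. Property \ref{itm:(0)} holds for any sheaf; \ref{itm:(1)} and \ref{itm:(2)} follow from strong invariance by a local analysis of smooth schemes, comparing sections over $X$ with sections over a dense open and with the intersection of the local rings at codimension-one points. Granting this, the extra axioms of a strongly unramified datum are read off cohomologically: \ref{itm:uA2'}, \ref{itm:uA5(i)} and \ref{itm:uA5(ii)} express the good behaviour of the pointed sets $H^1_y(X,M)$ and $H^2_z(X,M)$, identified with local Nisnevich cohomology, while \ref{itm:uA6} is precisely the translation of $\AAA^1$-invariance: the isomorphism $M(U)\xrightarrow{\sim} M(\AAA^1_U)$ is the degree-$0$ invariance, and exactness of the complex $C^*(\AAA^1_U,M)$ encodes the invariance of $H^1_{Nis}$ over $\AAA^1_U$ for $U$ local of dimension $\leq 1$.

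For (B), the crux is to prove that the complex $C^*(X,M)$ computes the Nisnevich cohomology of the associated sheaf in low degrees: granting \ref{itm:uA2'} the complex is defined, and one shows $H^1_{Nis}(X,M)$ is its first cohomology ``pointed set'' (with $H^2$ controlled by the last term). I would do this by comparing $C^*(X,M)$ with the descent spectral sequence for the Nisnevich topology, reducing to smooth local schemes of dimension $\leq 2$ where the complex is known to be exact, and using \ref{itm:uA5(i)} and \ref{itm:uA5(ii)} to handle the generating distinguished squares, i.e.\ the étale maps inducing isomorphisms on residue fields. Once $H^1_{Nis}$ is expressed through $C^*$, strong $\AAA^1$-invariance in degrees $0$ and $1$ follows: degree $0$ is \ref{itm:uA6} globalized by Nisnevich descent and induction on $\dim X$, and degree $1$ follows by localizing the comparison of $C^*(X,M)$ with $C^*(\AAA^1_X,M)$ and invoking the $\AAA^1$-invariance built into \ref{itm:uA6}.

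The main obstacle is exactly the identification in (B) of $H^1_{Nis}$ with the cohomology of the explicit Rost--Schmid-type complex $C^*(X,M)$. Because $M$ is only a sheaf of (a priori non-abelian) groups, $H^1$ is merely a pointed set, so the comparison must be run through torsors and exact sequences of pointed sets rather than a naive homological argument; moreover the complex is known to be exact only on local schemes of dimension $\leq 2$, so the statement must be propagated to arbitrary smooth $X$ by induction on dimension together with the Nisnevich-local structure, which is where \ref{itm:uA5(i)}, \ref{itm:uA5(ii)} and \ref{itm:uA6} are consumed. Deriving the $\AAA^1$-invariance of $H^1_{Nis}$ from the purely local input of \ref{itm:uA6} is the genuine technical heart of the statement.
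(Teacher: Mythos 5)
The paper offers no proof of this statement beyond the citation to \cite[Theorem 2.27]{Mor12}, and your outline reproduces the architecture of Morel's argument: restrict the equivalence of \cite[Theorem 2.11]{Mor12} to the two full subcategories, observe that full faithfulness is inherited, and reduce everything to the two essential-surjectivity claims, with the comparison of the explicit complex $C^*(X,M)$ with Nisnevich cohomology in degrees $\leq 1$ (run through torsors and pointed sets, propagated from local schemes of dimension $\leq 2$ via \ref{itm:uA5(i)}, \ref{itm:uA5(ii)} and \ref{itm:uA6}) as the technical heart. This matches the source's strategy; the only step you pass over lightly is that a strongly $\AAA^1$-invariant sheaf of groups is unramified in the first place (properties \ref{itm:(1)} and \ref{itm:(2)}), which in Morel's text is itself a nontrivial consequence of $\AAA^1$-invariance rather than a routine local check.
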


\begin{Def} A strongly $\AAA^1$-invariant Nisnevich sheaf of abelian groups is called a {\em homotopy sheaf}. We denote by $\HI(k)$ the category of homotopy sheaves and natural transformations of sheaves.
\end{Def}

\begin{Par}{\sc Monoidal structure} \label{HImonoidal}
Recall that there is a canonical adjunction of categories
\begin{center}

$\xymatrix{
\DAeff(k)
\ar@<1ex>[r]^-{\OO}
&
D(\operatorname{Sh}(\Sm))
\ar@<1ex>[l]^-{\pi_{\AAA^1}}
}$

\end{center}
where $\DAeff(k)$ the effective $\AAA^1$-derived category and $D(\operatorname{Sh}(\Sm))$ is the derived category of complexes of sheaves over $\Sm$ (see \cite[§5]{CD12}). Thanks to Morel's $\AAA^1$-localization theorem, we can prove that there is a unique t-structure on $\DAeff(k)$ such that the forgetful functor $\OO$ is t-exact and that the category of homotopy sheaves $\HI(k)$ is equivalent to the heart $(\DAeff(k))^{\heartsuit}$ for this t-structure (in particular, $\HI(k)$ is a Grothendieck category). Since the canonical tensor product $\otimes_{\DAeff}$ is right t-exact, it induces a monoidal structure on $\HI(k)$. Precisely, if $F,G\in \HI(k)$ are two homotopy sheaves, then their tensor product is
\begin{center}
$F\otimes_{HI} G=H_0^{\AAA^1}\circ \OO \circ \pi_{\AAA^1}(F\otimes_{\DAeff} G)$
\end{center}
where $H_0^{\AAA^1}$ is the homology object in degree 0 for the homotopy t-structure.
\end{Par}

\subsection{Contracted homotopy sheaves} \label{SubsectionContractedHomotopySheaves}

\begin{Par}
In this section, we fix $M\in \HI(k)$ a homotopy sheaf. Recall that the contraction $M_{-1}$ is by definition the sheaf of abelian groups
\begin{center}
$X\mapsto \ker(M(\Gm \times X) \to M(X))$.
\end{center}
Equivalently, we have
\begin{center}
$\M=\Hominter(\Gm,M)$
\end{center}
where $\Hominter$ is the internal hom-object of $\HI(k)$. According to \cite[Lemma 2.32]{Mor12}, the sheaf $M_{-1}$ is also a homotopy sheaf and is called a {\em contracted homotopy sheaf}.
\end{Par}

\begin{Par} \label{GWstructureContracted}
Any unit map $a:\Gm\to \Gm$ induces a morphism 
\begin{center}
$\ev{a}:\Hominter(\Gm,M)\to \Hominter(\Gm,M)$
\end{center}
and thus a $\GW$-module structure on $\M$ (see \cite[Lemma 3.49]{Mor12}). Moreover, we have a bilinear pairing
\begin{center}
$\kMW_1 \times M_{-1} \to M$ \\
$([u],\mu)\mapsto [u]\cdot \mu$. 
\end{center}
\end{Par}

\begin{Par} \label{DefMtwisted}
Let $N\in \HI(k)$ be another homotopy sheaf and assume $N$ is equipped with a $\GW$-module structure. Let $X\in \Sm$ be a smooth scheme and $\LL$ a vector line bundle on $X$. As usual, we define the twist of $N$ by $\LL$:
\begin{center}
$N\{\LL\}=N\otimes_{\ZZ[\Gm]}\ZZ[\LL^\times]$
\end{center}
as the sheaf associated to the presheaf on the Zariski site $X_{\txt{Zar}}$:
\begin{center}
$U\mapsto N(U)\otimes_{\ZZ[\OO_X(U)^\times]}\ZZ[\LL_U^\times]$
\end{center}
where $\LL_U^\times$ is the set of isomorphisms between $\OO_U$ and $\LL_U$ (which may be empty). We put $N(X,\LL)=\Gamma(X,N\{\LL\})$ and remark that, if $X$ is affine, then $N(X,\LL)=N\otimes_{\ZZ[\Gm]}\ZZ[\LL^\times]$.
In particular, this definition applies to the sheaf $N=\M$.
\end{Par}

\begin{Par}{\sc Cohomology with support exact sequence}  \label{HILongExactSequence} Let $M\in \HI(k)$ be a homotopy sheaf. For any closed immersion $i:Z\to X$ of smooth schemes over $k$, with complementary open immersion $j:U\to X$, there exists a canonical cohomology with support exact sequence of the form:
\begin{center}
$\xymatrix{
\Gamma_Z(X,M)
\ar[r]^-{i_*}
&
M(X)
\ar[r]^-{j^*}
&
M(U)
\ar[r]^-{\partial}
&
H^1_Z(X,M)
\ar[r]
&
\dots
}$
\end{center}
\end{Par}

\begin{Par} \label{ResidueMapsMoinsUn} The purity isomorphism (more precisely: the axiom \ref{itm:uA5(i)} and \cite[Lemma 3.50]{Mor12}) implies that for any discrete valuation $v$ on a field $F\in \FFF_k$, one has a canonical bijection
\begin{center}
$H^1_v(\OO_v,M)\simeq \M(\kappa(v),\LLL_{v})$
\end{center}
and thus obtain a residue map
\begin{center}
$\partial_v:M(F)\to \M(\kappa(v),\LLL_{v})$.
\end{center}

\end{Par}

\begin{Pro} \label{LocalizationContractedHI}

Let $M\in \HI(k)$ be a homotopy sheaf and consider the following commutative square
\begin{center}

$\xymatrix{
T \ar@{^{(}->}[r]^k  \ar@{^{(}->}[d]_q  & Y \ar@{^{(}->}[d]^p \\
Z \ar@{^{(}->}[r]_i & X 
}$

\end{center}
of closed immersions of separated schemes over $k$. We have the following diagram
\begin{center}

$\xymatrixcolsep{1pc} \xymatrix{
\Gamma_T(X,M) \ar[r]^{k_*} \ar[d]_{q_*} &
\Gamma_Y(X,M) \ar[r]^-{k'^*} \ar[r] \ar[d]^{p_*} &
\Gamma_{Y-T}(X-Z,M) \ar[r]^{\partial_k} \ar[d]^{\tilde{p}_*} &
H^1_T(X,M) \ar[d]^{q_*} \\
\Gamma_Z(X,M) \ar[r]^{i_*} \ar[d]^{q'^*} & 
M(X) \ar[r]^{i'^*} \ar[d]^{p'_*} &
M(X- Z) \ar[d]^{\tilde{p}'^*} \ar[r]^{\partial_i} &
H^1_Z(X,M) \ar[d]^{q'^*} \\
\Gamma_{Z-T}(X-Y,M) \ar[r]^-{\tilde{i}_*} \ar[d]^{\partial_k} &
M(X- Y) \ar[r]^-{\tilde{i}'^*} \ar[d]^{\partial_{{p}}} &
M(X- (Y\cup Z)) \ar@{}[rd]|-{(*)} \ar[d]^{\partial_{\tilde{p}}} \ar[r]^-{\partial_{\tilde{i}}} &
H^1_{Z-T}(X- T,M) \ar[d]^{\partial_{\tilde{k}}} \\
H^1_T(X,M) \ar[r]_{k_*} & 
H^1_Y(X,M) \ar[r]_-{k'^*} &
H^1_{Y-T}(X- Z,M) \ar[r]_-{\partial_k} &
H^{2}_T(X,M)
}$
\end{center}
with obvious maps. Each squares of this diagram is commutative except for $(*)$ which is anti-commutative.
\end{Pro}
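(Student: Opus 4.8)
The plan is to recognize the whole picture as the cohomological shadow of a single $3\times 3$ array of distinguished triangles, and then to isolate the sign in $(*)$ as the standard sign attached to an iterated connecting homomorphism. First I would pass to the derived category $\DAeff(k)$ of \ref{HImonoidal} (of which $\HI(k)$ is the heart), where the cohomology-with-support exact sequence of \ref{HILongExactSequence} is precisely the long exact sequence of a localization triangle
$$R\Gamma_Z(X,M)\xrightarrow{i_*} R\Gamma(X,M)\xrightarrow{j^*} R\Gamma(X-Z,M)\xrightarrow{+1}$$
attached to a closed immersion $Z\hookrightarrow X$ and its open complement $j$. Writing $U=X-Z$, $V=X-Y$ and $W=X-(Y\cup Z)=U\cap V$, and using that the square is cartesian (as the notation presupposes, $T=Y\cap Z$, so $Y\cap U=Y-T$ and $Z\cap V=Z-T$), I would assemble the nine objects
$$A_{11}=R\Gamma_T(X,M),\quad A_{12}=R\Gamma_Y(X,M),\quad A_{13}=R\Gamma_{Y-T}(U,M),$$
$$A_{21}=R\Gamma_Z(X,M),\quad A_{22}=R\Gamma(X,M),\quad A_{23}=R\Gamma(U,M),$$
$$A_{31}=R\Gamma_{Z-T}(V,M),\quad A_{32}=R\Gamma(V,M),\quad A_{33}=R\Gamma(W,M).$$

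Each row is the localization triangle ``restrict from $X$ to $U$'' applied to the supports $Y$, $X$, $V$ respectively, and each column is ``restrict from $X$ to $V$'' applied to the supports $Z$, $X$, $U$. Since these two restrictions commute and all the relevant open and closed immersions fit into one commutative cube, every inner square $A_{ij}\to A_{i,j+1}$, $A_{ij}\to A_{i+1,j}$ commutes strictly, by functoriality of $R\Gamma$ together with naturality of the localization triangle under the open immersions $U\hookrightarrow X$, $V\hookrightarrow X$ and $W\hookrightarrow U,V$. Thus the $A_{ij}$ form a genuine $3\times 3$ diagram of distinguished triangles.

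Next I would apply $H^n(-)$ and read off the displayed array: the three $H^0$-rows, interleaved with the $H^1$- and $H^2$-terms, reproduce the $4\times 4$ diagram, where the pushforwards $k_*,i_*,\dots$ and the pullbacks $k'^*,i'^*,\dots$ are the maps induced on cohomology by the honest morphisms $A_{ij}\to A_{i'j'}$, while the residues $\partial$ are the connecting homomorphisms of the row- and column-triangles (here one identifies the abstract connecting maps with the concrete residue maps $\partial_v$ of \ref{ResidueMapsMoinsUn} through the purity isomorphism). The squares then fall into three types. Those whose four edges are honest maps commute because $H^n$ is a functor and the array commutes; those with two honest edges and two connecting edges commute by the naturality of the connecting homomorphism along a morphism of distinguished triangles. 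Exactly one square, namely $(*)$, has all four edges equal to connecting homomorphisms:
$$\begin{array}{ccc}
H^0(A_{33}) & \xrightarrow{\ \partial_{\tilde i}\ } & H^1(A_{31})\\
\downarrow \partial_{\tilde p} & & \downarrow \partial_{\tilde k}\\
H^1(A_{13}) & \xrightarrow{\ \partial_k\ } & H^2(A_{11})
\end{array}$$
which computes the iterated connecting map $H^0(A_{33})\to H^2(A_{11})$ in its two possible orders.

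For $(*)$ I would invoke the basic sign rule of triangulated categories: in a $3\times 3$ diagram of distinguished triangles with commuting squares, the two iterated connecting maps $A_{33}\to A_{11}[2]$ differ by $-1$, whence $\partial_{\tilde k}\circ\partial_{\tilde i}=-\,\partial_k\circ\partial_{\tilde p}$, so that $(*)$ anticommutes while the other eight squares commute. I expect the main obstacle to be exactly this sign: confirming that precisely one square picks up $-1$, and that it is $(*)$, requires tracking the shift through the octahedral construction underlying the $3\times 3$-lemma, whereas everything else reduces to functoriality and the naturality of long exact sequences. A secondary point demanding care is the geometric bookkeeping guaranteeing that the support objects match up (that $T=Y\cap Z$, $Y\cap U=Y-T$, $Z\cap V=Z-T$ and $W=U\cap V$) and that the abstract boundary maps agree with the residues $\partial_v$; once these identifications are in place the statement follows.
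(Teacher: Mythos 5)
Your proposal is correct and is essentially the paper's own argument: the paper's proof consists of the single remark that the diagram is ``a classical consequence of the octahedron axiom,'' and your $3\times 3$ array of localization triangles in $\DAeff(k)$, with the sign on the square of two iterated connecting maps, is precisely the standard argument being invoked. You have merely written out the details (including the useful observation that the notation presupposes $T=Y\cap Z$), so no further comparison is needed.
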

\begin{proof}
This is a classical consequence of the octahedron axiom.
\end{proof}

The next proposition is a direct application of the functoriality in $M$ of the cohomology with support exact sequence.
\begin{Pro} \label{eR3eHI}
Let $i:Z\to X$ a closed immersion with complementary open immersion ${j:U\to X}$. Let $\alpha \in \kMW_1(X)$. Then the following diagram is commutative:
\par 

\begin{center}

$\xymatrix{
\Gamma_Z(X,\M) \ar[r]^-{i_*} \ar[d]^{  \gamma_{\alpha}} &
\M(X) \ar[r]^{j^*} \ar[d]^{\gamma_{\alpha}} &
M_{-1}(U) \ar[r]^{\partial_{Z,X}} \ar[d]^{\gamma_{\alpha}} &
H^1_Z(X,\M) \ar[d]^{\gamma_{\alpha}} \\
\Gamma_Z(X,M)  \ar[r]^{i_*}&
M(X)  \ar[r]^{j^*}&
M(U)  \ar[r]^-{\partial_{Z,X}} &
H^1_Z(X,M)
}$

\end{center}

where $\gamma_{\alpha}$ is the multiplication map (see \ref{GWstructureContracted}).
\end{Pro}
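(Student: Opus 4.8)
The plan is to realize the vertical maps $\gamma_\alpha$ as the maps induced on cohomology with support by a single morphism of Nisnevich sheaves, and then to invoke the naturality of the long exact sequence recalled in \ref{HILongExactSequence}.

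First I would fix the global section $\alpha \in \kMW_1(X)$ and feed it into the bilinear pairing $\kMW_1 \times \M \to M$ of \ref{GWstructureContracted}. For every smooth $X$-scheme $Y$, pulling $\alpha$ back to $\kMW_1(Y)$ and multiplying yields a homomorphism $\M(Y) \to M(Y)$; since this multiplication is compatible with restriction maps, these assemble into a morphism of Nisnevich sheaves of abelian groups $\gamma_\alpha \colon \M \to M$ on the small site $X_{\mathrm{Nis}}$. Observe that this is \emph{not} a morphism in $\HI(k)$, since $\alpha$ is tied to $X$, but it is a genuine sheaf morphism on $X_{\mathrm{Nis}}$, which is all that the cohomology-with-support formalism requires.

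Next, the cohomology-with-support sequence of \ref{HILongExactSequence} is the long exact sequence associated to the derived functors of $\Gamma_Z(X,-)$, and is therefore natural in the sheaf argument. Applying this naturality to $\gamma_\alpha$ produces a morphism of long exact sequences whose squares are precisely those in the statement: the top row is the sequence for $\M$, the bottom row is the sequence for $M$, the restriction map over $U$ is multiplication by $j^*\alpha$, and the connecting maps $\partial_{Z,X}$ are matched automatically.

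The only thing requiring verification is that $\gamma_\alpha$ genuinely defines a sheaf morphism, i.e.\ that multiplication by $\alpha$ commutes with the restriction maps of $\M$ and $M$; this is immediate from the bilinearity of the pairing in \ref{GWstructureContracted}. I expect no real obstacle here: once $\gamma_\alpha$ is identified as a morphism of sheaves, the commutativity of every square is formal, and the content of the proposition is exactly the functoriality of the sequence in \ref{HILongExactSequence}.
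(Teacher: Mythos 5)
Your proposal is correct and follows exactly the paper's own route: the paper justifies this proposition in one line as ``a direct application of the functoriality in $M$ of the cohomology with support exact sequence,'' which is precisely your argument of realizing $\gamma_\alpha$ as a sheaf morphism on $X_{\mathrm{Nis}}$ and invoking naturality of the sequence from \ref{HILongExactSequence}. Your added remark that $\gamma_\alpha$ need only be a morphism of sheaves on the small site, not a morphism in $\HI(k)$, is a worthwhile clarification the paper leaves implicit.
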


\section{Sheaves with generalized transfers} \label{SectionSheavesWithGeneralizedTransfers}

\subsection{Morel's axioms} \label{SubsectionSheavesWithGeneralizedTransfers}

The following definition is a slightly improved version of Morel's definition in \cite[Definition 5.7]{Mor11}. It is directly inspired by Rost's theory of cycles modules. Lastly, it can be seen as an effective counter-part of our own axiomatic of MW-cycle modules in \cite{Fel18}.

\begin{Def} \label{DefGeneralizedTransfers}
Let $M$ be a homotopy sheaf (i.e. a strongly $\AAA^1$-invariant Nisnevich sheaf of abelian groups on $\Sm$). We say that $M$ has a structure of generalized transfers if $M$ has a structure of $\GW$-modules and satisfies the following datum
\begin{description}
\item[\namedlabel{itm:eD2}{eD2}] For each finite extension $\phi:E\to F$ in $\mathcal{F}_k$ a map
\begin{center}
$\Tr_{\phi}=\Tr_{F/E}:M(F,\LLL_{F/k})\to M(E,\LLL_{E/k})$
\end{center} 
called the transfer morphism from $F$ to $E$.
\end{description}
In addition, this datum satisfies the following axioms:
\begin{description}

\item[\namedlabel{itm:eR1b}{eR1b}] $\Tr_{\Id_E}=\Id_{M(E)}$ and for any composable finite morphisms $\phi$ and $\psi$ in $\mathcal{F}_k$, we have 
\begin{center}
$\Tr_{\psi\circ \phi}=\Tr_{\phi}\circ \Tr_{\psi}$.

\end{center}

\item [\namedlabel{itm:eR1c}{eR1c}] Consider $\phi:E\to F$ and $\psi:E\to L$ with $\phi$ finite. Let $R$ be the ring $F\otimes_E L$. For each $\mathfrak{p}\in \Spec R$, let $\phi_\mathfrak{p}:L\to R/\mathfrak{p}$ and $\psi_\mathfrak{p}:F\to R/\mathfrak{p}$ be the morphisms induced by $\phi$ and $\psi$. One has
\begin{center}
$M(\psi)\circ \Tr_{\phi}=\displaystyle \sum_{\mathfrak{p}\in \Spec R}e_{\mathfrak{p},\epsilon} \Tr_{\phi_\mathfrak{p}}\circ M(\psi_\mathfrak{p})$
\end{center}
where $e_{p,\epsilon}=\sum_{i=1}^{e_p}\ev{-1}^{i-1}$ is the quadratic form associated to the length $e_p$ of the localized ring $R_{(\mathfrak{p})}$.

\item [\namedlabel{itm:eR2}{eR2}] Let $\psi:E\to F$ be a finite extension of fields.

\item [\namedlabel{itm:eR2b}{eR2b}] For $\ev{a}\in \GW(E)$ and $\mu \in M(F,\LLL_{F/k})$, one has 	$\Tr_{F/E}\big(\ev{\psi(a)}\cdot \mu\big)=\ev{a}\cdot \Tr_{F/E}(\mu)$.

\item [\namedlabel{itm:eR2c}{eR2c}]  For $\ev{a}\in \GW(F,\LLL_{F/k})$ and $\mu \in M(E)$, one has 	$\Tr_{F/E}\big(\ev{a}\cdot \res_{F/E}(\mu)\big)=\Tr_{F/E}(\ev{a})\cdot \mu$.

\item [\namedlabel{itm:eR3b}{eR3b}] Let $\phi:E\to F$ be a finite extension of fields and let $v$ be a valuation on $E$. For each extension $w$ of $v$, we denote by $\phi_w:\kappa(v)\to \kappa(w)$ the induced morphism. We have
\begin{center}
$\partial_v\circ \Tr_{\phi}=\sum_w \Tr_{\phi_w}\circ \partial_w$
\end{center}
where $\partial_v:M(E,\LLL_{E/k})\to M(\kappa(v),\LLL_{\kappa(v)/k})$ and $\partial_w:M(F,\LLL_{F/k})\to M(\kappa(w),\LLL_{\kappa(w)/k})$ are the residue maps defined in \ref{ResidueMapsMoinsUn}.

\end{description}

\end{Def}

\begin{Rem}
Our definition differs from Morel's in two ways. First, we have taken into account the twists naturally arising (this is not really important if one works in zero characteristic). Second, the axiom \textbf{A3} of \textit{loc. cit.} is replaced by \ref{itm:eR3b} (these two axioms are equivalent in some cases).
\end{Rem}

\begin{Rem}
We know from \ref{EquivalenceHomotopySheaves} that homotopy sheaves can be understood as certain functors on function fields. Taking into account this fact, our axioms are indeed {\it effective} variants of that of Milnor-Witt cycle modules. In fact, we will see that they correspond to homotopy sheaves with Milnor-Witt transfers, as MW-cycle modules correspond to homotopy modules. (This explains our choice of numbering of the axioms.)
\end{Rem}

\begin{Rem}
A homotopy sheaf with generalized transfers is a particular case of a sheaf with $\AAA^1$-transfers as defined in \cite[§5]{BachmannYakerson18}.
\end{Rem}

\begin{Exe} \label{GeneralizedTransfersOnContraction}
\begin{enumerate}
\item The Milnor-Witt sheaf $\KMW$ has a structure of generalized transfers (\cite[§4.2]{Mor12}, or \cite[Theorem 4.13]{Fel18}, see also Theorem \ref{R1c_fort}). 
\item  If $M$ is a homotopy sheaf with generalized transfers, then so is $\M$.
\item In the next section, we will give conditions for a contracted homotopy sheaf $\M$ to be equipped with a structure of generalized transfers.
\end{enumerate}
\end{Exe}

\begin{Def} A map between homotopy sheaves with Morel generalized transfers is a natural transformation commuting with the transfers.
We denote by $\HIgtr(k)$ the category of homotopy sheaves with (Morel) generalized transfers over $k$.
\end{Def}

\begin{Par}{\sc Rost-Schmid complex}     \label{e2.0.1}
Let $M\in \HIgtr(k)$ and let $X$ be a scheme essentially of finite type over $k$. We define the Rost-Schmid complex as the graded abelian group defined for any $n\in \NNN$ by:
\begin{center}
$C^n(X,M)=\bigoplus_{x\in X^{(n)}}M_{-n}(\kappa(x),\LLL_{\kappa(x)/k})$.

\end{center}
The transition maps $d$ are defined as follows. If $X$ is normal with generic point $\xi$, then for any $x\in X^{(1)}$ the local ring of $X$ at $x$ is a valuation ring so that we have a map $\partial_x: M_{-n}(\kappa(\xi), \LLL_{\kappa(\xi)/k}) \to M_{-n-1}(\kappa(x), \LLL_{\kappa(x)/k})$ for any $n$.

Now suppose $X$ is a scheme essentially of finite type over $k$ and let $x,y$ be two points in $X$. We define a map
\begin{center}
$\partial^x_y:M_{-n}(\kappa(x),\LLL_{\kappa(x)/k}) \to M_{-n-1}(\kappa(y),\LLL_{\kappa(y)/k})$
\end{center}
as follows. Let $Z=\overline{ \{x\}}$. If $y\not \in Z$, then put $\partial^x_y=0$. If $y\in Z$, let $\tilde{Z}\to Z$ be the normalization and put
\begin{center}
$\partial^x_y=\displaystyle \sum_{z|y} \Tr_{\kappa(z)/\kappa(y)}\circ \, \partial_z$
\end{center}
with $z$ running through the finitely many points of $\tilde{Z}$ lying over $y$.
\par Thus, we may define a differential map
\begin{center}
$d=\sum_{x,y}\partial_y^x:C^n(X,M)\to C^{n+1}(X,M)$.
\end{center}

\end{Par}
\begin{Pro}
For any homotopy sheaf $M$ and any scheme essentially of finite type $X$, the Rost-Schmid complex  $C^*(X,M)$ is a complex.
\end{Pro}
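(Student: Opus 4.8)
The plan is to show that the differential $d=\sum_{x,y}\partial^x_y$ satisfies $d\circ d=0$. Since $d$ is a direct sum over pairs of points, it suffices to fix $x\in X^{(n)}$ and $w\in X^{(n+2)}$ and to prove that the $(x,w)$-component of $d^2$, namely $\sum_{y\in X^{(n+1)}}\partial^y_w\circ\partial^x_y$, vanishes. I would begin with the standard support reductions. If $w\notin Z:=\overline{\{x\}}$ then every $\partial^x_y$ that could feed into $\partial^y_w$ already lands outside the relevant closures, so the sum is zero by the very definition of the maps $\partial^{(-)}_{(-)}$; hence one may assume $w\in Z$. All nonzero contributions then come from points $y$ with $w\in\overline{\{y\}}\subseteq Z$, so I would replace $X$ by the integral scheme $Z$ and localize at $w$. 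This reduces the problem to the case where $X$ is a two-dimensional integral local scheme with generic point $x$ and closed point $w$, and $y$ runs over the codimension-one points specializing to $w$.

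Next I would remove the singularities by passing to normalizations, which is precisely where the generalized-transfer structure is used. The maps $\partial^x_y$ and $\partial^y_w$ are by definition assembled from the geometric residues of \ref{ResidueMapsMoinsUn} together with the transfers $\Tr_{\kappa(z)/\kappa(y)}$ attached to the normalization. Using the residue--transfer compatibility \ref{itm:eR3b} and the functoriality of transfers \ref{itm:eR1b}, I would rewrite the double sum $\sum_y\partial^y_w\circ\partial^x_y$ so that the computation is transported onto the normalization $\tilde X$ of $X$ (and, for the inner differential, onto the normalizations of the curves $\overline{\{y\}}$). The effect of \ref{itm:eR3b} is exactly to interchange a residue with the transfer attached to a finite extension of residue fields, so that after these rearrangements every residue appearing is a geometric boundary map on a \emph{normal} (indeed essentially smooth, after choosing smooth models) two-dimensional local scheme, and the various transfer factors cancel in pairs or reassemble correctly by \ref{itm:eR1b}.

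Finally, on the smooth models the vanishing is purely a statement about consecutive boundary maps of the cohomology-with-support exact sequence \ref{HILongExactSequence}. Here I would invoke Proposition \ref{LocalizationContractedHI}: its large diagram records all the compatibilities between the pushforwards, restrictions and boundary maps for a nested pair of closed immersions, and contains a single anticommutative square $(*)$ with all remaining squares commutative. Chasing a class through this diagram, the composite of two consecutive residues is expressed as a sum of terms that cancel precisely because of the sign coming from $(*)$, which yields $\sum_y\partial^y_w\circ\partial^x_y=0$; this is the geometric incarnation of the usual $d^2=0$ for a Gersten/coniveau complex. I expect the genuine obstacle to be the middle step: controlling the normalization when $Z$ and the curves $\overline{\{y\}}$ are singular at $w$, keeping track of which valuations, residue-field extensions and ramification data occur, and verifying that the generalized transfers assemble exactly so as to match the residues upstairs with those downstairs. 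Everything hinges on \ref{itm:eR3b} being strong enough to license these rearrangements, while the final normal-case identity is essentially formal once Proposition \ref{LocalizationContractedHI} is in hand.
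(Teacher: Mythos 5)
Your outline follows the right route, and it is in fact the route of the proof the paper is pointing to: the paper does not prove this proposition itself but simply cites \cite[Theorem 5.31]{Mor12}, whose argument is exactly your reduction to a two-dimensional local situation, a normalization step mediated by the transfers, and a final vanishing of two consecutive geometric boundary maps coming from the octahedron-type diagram of Proposition \ref{LocalizationContractedHI}. So there is no disagreement about the strategy.

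The difficulty is that the middle step --- the one you yourself flag as ``the genuine obstacle'' --- is not an implementation detail but the entire content of the theorem, and as written it does not follow from a bare appeal to \ref{itm:eR3b} and \ref{itm:eR1b}. Concretely: (1) after localizing at $w$, the surface $Z=\overline{\{x\}}$ and the curves $\overline{\{y\}}$ may be singular at $w$; normalizing $Z$ yields a normal but not necessarily regular two-dimensional local ring, whereas the identification $H^1_v(\OO_v,M)\simeq M_{-1}(\kappa(v),\LLL_v)$ of \ref{ResidueMapsMoinsUn}, and hence the whole diagram of Proposition \ref{LocalizationContractedHI} in the form you need it, is only available on smooth models via \ref{itm:uA5(i)}; passing from the normal local surface to a smooth one is a genuine step you do not address. (2) The residues entering $\partial^y_w$ are taken on the normalizations of the curves $\overline{\{y\}}$, while the boundary maps on (a smooth model of) $\tilde Z$ produce residues at the codimension-one points of $\tilde Z$; these differ by finite residue-field extensions with ramification, and reconciling them --- with the correct multiplicities and twists --- is precisely the role of Morel's Theorem 5.19 in the contracted case, for which \ref{itm:eR3b} is the substitute here (as the paper notes in the proof of Proposition \ref{Prop4.6eff}); this comparison is where the actual computation lives and cannot be summarized as ``interchange a residue with a transfer.'' (3) Your claim that the transfer factors ``cancel in pairs or reassemble correctly by \ref{itm:eR1b}'' glosses over the point that composites of transfers arising from two different towers of normalizations must be identified with a single transfer; in the Bass--Tate setting this independence of the generating system is exactly Conjecture \ref{ConjectureMoinsUn}, and here it is supplied by the hypothesis that $M$ carries \emph{generalized} transfers --- this should be invoked explicitly, since it is the reason the statement is asserted for $M\in\HIgtr(k)$ rather than for an arbitrary contraction. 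In short: correct skeleton, but the proof consists of fleshing it out, and that flesh is missing.
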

\begin{proof}
See \cite[Theorem 5.31]{Mor12}.
\end{proof}

\begin{Def}
For any homotopy sheaf $M$ and any scheme essentially of finite type $X$, the cohomology groups associated to the Rost-Schmid complex are denoted by $A^*(X,M)$.

\end{Def}
We now define the usual basic maps for the Rost-Schmid complex and prove that they are complex morphisms in some special cases.
\begin{Par}
{\sc Pullback} \label{RostSchmidPullback}
Let $M\in \HIgtr(k)$. Let $f:X\to Y$ be an {essentially smooth} morphism of schemes essentially of finite type. Suppose $X$ connected and denote by $s$ the relative dimension of $f$. Define
\begin{center}
$f^*:C^*(Y,M) \to C^{*}(X,M\otimes \LLL_{X/Y}^{\vee})$
\end{center}
as follows. If $f(x)=y$, then $(f^*)^y_x=\Theta\circ \res_{\kappa(x)/\kappa(y)}$, where $\Theta$ is the canonical isomorphism induced by $\LLL_{\Spec\kappa(x)/\Spec\kappa(y)}\simeq \LLL_{X/Y} \times_X \Spec \kappa(x)$. Otherwise, $(f^*)^y_x=0$. If $X$ is not connected, take the sum over each connected component.

\end{Par}

\begin{Par}{\sc Pushforward} \label{RostSchmidPushforward}Let $M\in \HIgtr(k)$. Let $f:X\to Y$ be a morphism between schemes essentially of finite type over $k$ and assume that $X$ is connected (if $X$ is not connected, take the sum over each connected component). Let $d=\dim(Y)-\dim(X)$. We define
\begin{center}
$f_*:C^*(X,M)\to C^{*-d}(Y,M_{-d})$
\end{center}
as follows. If $y=f(x)$ and $\kappa(x)$ is finite over $\kappa(y)$, then put $(f_*)^y_x=\Tr_{\kappa(x)/\kappa(y)}$ where $\Tr_{\kappa(x)/\kappa(y)}$ is the transfer map \ref{itm:eD2} if $n=0$, and is the transfer map coming from the induced generalized transfers on $\M$ if $n>0$. Otherwise, put $(f_*)^y_x=0$.
\end{Par}

\begin{Par}{\sc $\GW$-action} \label{RostSchmidGWaction}
Let $M\in \HIgtr(k)$. Let $X$ be a scheme essentially of finite type and $a\in \OO_X^*$ a global unit. Define a morphism
\begin{center}
$\ev{a}:C^*(X,M)\to C^*(X,M)$
\end{center}
as follows. Let $x,y\in X^{(p)}$ and $\rho\in M_{-*}(\kappa(x),\LLL_{x/k})$. If $x=y$, then $\ev{a}^y_x(\rho)=\ev{a(x)}\cdot \rho$. Otherwise, $\ev{a}^y_x(\rho)=0$.
\end{Par}

\begin{Par}{\sc Boundary maps} \label{BoundaryMaps}
Let $M\in \HIgtr(k)$. Let $X$ be a scheme essentially of finite type over $k$, let $i:Z\to X$ be a closed immersion and let $j:U=X\setminus Z \to X$ be the inclusion of the open complement. We will refer to $(Z,i,X,j,U)$ as a boundary triple and define
\begin{center}

$\partial=\partial^U_Z:C_p(U,M) \to C_{p-1}(Z,M)$
\end{center}
by taking $\partial^x_y$ to be as the definition in \ref{e2.0.1} with respect to $X$. The map $\partial^U_Z$ is called the boundary map associated to the boundary triple, or just the boundary map for the closed immersion $i:Z\to X$.

\end{Par}

\par We now fix $M\in \HIgtr(k)$ a homotopy sheaf with generalized transfers and study the morphisms defined on the Rost-Schmid complex $C^*(?,M)$.

\begin{Pro}[Functoriality and base change] \label{Prop4.1eff}

\begin{enumerate}
\item Let $f:X\to Y$ and $f':Y\to Z$ be two morphisms of schemes essentially of finite type. Then
\begin{center}
$(f'\circ f)_*=f'_* \circ f_*$.

\end{center} 
\item Let $g:Y\to X$ and $g':Z\to Y$ be two essentially smooth morphisms. Then (up to the canonical isomorphism given by $\LLL_{Z/X}\simeq \LLL_{Z/Y}+(g')^*\LLL_{Y/Z}$):
\begin{center}

$(g\circ g')^*=g'^*\circ g^*$.
\end{center}
\item Consider a pullback square
\begin{center}

$\xymatrix{
U \ar[r]^{g'} \ar[d]_{f'} & Z \ar[d]^f \\
Y \ar[r]_{g} & X
}$
\end{center}
with $f,f',g,g'$ as previously. Then
\begin{center}

$g^*\circ f_* =  f'_* \circ g'^*$
\end{center}
up to the canonical isomorphism induced by $\LLL_{U/Z} \simeq \LLL_{Y/X}\times_Y U$.
\end{enumerate}
\end{Pro}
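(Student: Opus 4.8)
The plan is to prove all three identities by the same mechanism, since each of the maps $f_*$ and $g^*$ is defined \emph{componentwise}: for a point $x$ of the source and $y$ of the target, the entry $(\cdot)^y_x$ is either zero or a single field-level operation — a transfer $\Tr_{\kappa(x)/\kappa(y)}$ in the case of the pushforward (\ref{RostSchmidPushforward}), a restriction $\res_{\kappa(x)/\kappa(y)}$ precomposed with the canonical twist isomorphism $\Theta$ in the case of the pullback (\ref{RostSchmidPullback}). Hence each asserted equality of maps can be checked entry by entry, reducing it to an identity between composites of field-level operations together with the coherence of the determinant twists. Because the pushforward in positive degree uses the transfers induced on the contractions $M_{-j}$, and these again satisfy \ref{itm:eD2} and the axioms \ref{itm:eR1b}--\ref{itm:eR3b} by Example \ref{GeneralizedTransfersOnContraction}(2), the same field-level identities apply verbatim in every degree; I would therefore carry the computation out once for the underlying sheaf and let it propagate to all contractions.

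For part (1), fix $x$ in the source and set $y=f(x)$, $z=f'(y)$. The inner sum defining the $(z,x)$-entry of $f'_*\circ f_*$ collapses because $f(x)$ is a single point, leaving $\Tr_{\kappa(y)/\kappa(z)}\circ\Tr_{\kappa(x)/\kappa(y)}$, which equals the entry $\Tr_{\kappa(x)/\kappa(z)}$ of $(f'\circ f)_*$ by the transitivity axiom \ref{itm:eR1b}; one checks that $\kappa(x)/\kappa(z)$ is finite iff both intermediate extensions are, so the vanishing cases match as well. For part (2), the entries are twisted restrictions, so $(g\circ g')^*=g'^*\circ g^*$ follows from functoriality of $\res$ (that is, from $M$ being a functor on $\FFF_k$) together with the canonical additivity isomorphism $\LLL_{Z/X}\simeq \LLL_{Z/Y}+(g')^*\LLL_{Y/X}$ of cotangent-complex determinants under composition, which governs the coherence of $\Theta$; the only subtlety is that $\res$ must respect the $\GW$-torsor of trivializations entering $\Theta$, which is the equivariance provided by \ref{itm:eR2b}, ensuring $\res$ descends to the twisted sheaves $M\{\LLL\}$.

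Part (3) is the crux and the only place the quadratic arithmetic intervenes. I reduce to one entry: fix $x$ in $Z$ and $w$ in $Y$ and suppose $z_0:=f(x)=g(w)$, else both composites vanish there. The $(w,x)$-entry of $g^*\circ f_*$ is $\Theta\circ\res_{\kappa(w)/\kappa(z_0)}\circ\Tr_{\kappa(x)/\kappa(z_0)}$, while the points $u$ of $U=Z\times_X Y$ lying over $x$ and $w$ biject with $\Spec R$ for $R=\kappa(x)\otimes_{\kappa(z_0)}\kappa(w)$, with $\kappa(u)=R/\mathfrak{p}$, so the $(w,x)$-entry of $f'_*\circ g'^*$ is $\sum_{\mathfrak{p}\in\Spec R}\Tr_{\kappa(u)/\kappa(w)}\circ\Theta'\circ\res_{\kappa(u)/\kappa(x)}$. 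This is exactly the shape of axiom \ref{itm:eR1c} applied to the finite $\phi\colon\kappa(z_0)\to\kappa(x)$ and to $\psi\colon\kappa(z_0)\to\kappa(w)$, up to the coefficients $e_{\mathfrak{p},\epsilon}$. The decisive observation is that these coefficients are all trivial: since $g$ is essentially smooth, the fibre over $z_0$ is smooth, hence geometrically reduced, so $\kappa(w)$ is a \emph{separable} extension of $\kappa(z_0)$; therefore $R$ is reduced, each $R_{(\mathfrak{p})}$ is a field, $e_{\mathfrak{p}}=1$ and $e_{\mathfrak{p},\epsilon}=\ev 1$. With the coefficients gone, \ref{itm:eR1c} collapses to the required $\res_{\kappa(w)/\kappa(z_0)}\circ\Tr_{\kappa(x)/\kappa(z_0)}=\sum_{\mathfrak{p}}\Tr_{\kappa(u)/\kappa(w)}\circ\res_{\kappa(u)/\kappa(x)}$.

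The main obstacle is not this algebraic identity but the bookkeeping of twists: I must verify that the two twist isomorphisms $\Theta,\Theta'$ assemble compatibly across the square, i.e. that the canonical identification $\LLL_{U/Z}\simeq\LLL_{Y/X}\times_Y U$ intertwines $\res$ and $\Tr$ once they are transported to the twisted sheaves, using the projection-type compatibilities \ref{itm:eR2b} and \ref{itm:eR2c} to move $\GW$-trivializations through the transfers. Once this coherence is in place, together with the routine check that the degree shifts and the contraction shifts $M\mapsto M_{-j}$ agree on both sides, the entry-wise equalities assemble into the three stated identities.
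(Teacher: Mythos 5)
Your proof is correct and takes essentially the same route as the paper, which disposes of the proposition in one line by reducing each entry of the relevant matrices to the axioms \ref{itm:eR1b} (for transitivity of pushforwards) and \ref{itm:eR1c} (for the base change square), as in the stable case of MW-cycle modules. One imprecision in part (3) should be repaired: smoothness of the fibre $g^{-1}(z_0)$ does \emph{not} imply that every residue field $\kappa(w)$, $w\in g^{-1}(z_0)$, is separable over $\kappa(z_0)$ (e.g.\ the closed point cut out by $x^p-t$ in $\AAA^1$ over $\mathbb{F}_p(t)$); what saves the argument is that the codimension-preserving pullback $g^*$ only has nonzero entries at the \emph{generic} points $w$ of the fibre, and for those $\kappa(w)/\kappa(z_0)$ is indeed separable since the fibre is geometrically reduced, so that $R=\kappa(x)\otimes_{\kappa(z_0)}\kappa(w)$ is étale over $\kappa(w)$ and all multiplicities $e_{\mathfrak{p},\epsilon}$ in \ref{itm:eR1c} equal $\ev{1}$.
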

\begin{proof}
This follows as in \cite[Proposition 6.1]{Fel18} from \ref{itm:eR1b} and \ref{itm:eR1c}.
\end{proof}

\begin{Pro}\label{Prop4.6eff}
\begin{enumerate}
\item[(i)] Let $f:X\to Y$ be a finite morphism of schemes essentially of finite type. Then
\begin{center}

$d_Y\circ f_*= f_*\circ d_X$.
\end{center}
\item[(ii)] Let $g:Y\to X$ be an essentially smooth morphism. Then
\begin{center}

$g^*\circ d_X=d_Y \circ g^*$.
\end{center}
\item[(iii)] Let $a$ be a unit on $X$. Then
\begin{center}

$d_X \circ \ev{a}=\ev{a}\circ d_X$
\end{center}
\item[(iv)] Let $(Z,i,X,j,U)$ be a boundary triple. Then
\begin{center}

$d_Z\circ \partial^U_Z=-\partial^U_Z\circ d_U$.
\end{center}

\end{enumerate}

\end{Pro}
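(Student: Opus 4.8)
The plan is to treat the four identities uniformly. Each asserts that the Rost--Schmid differential $d=\sum_{x,y}\partial^x_y$ of \ref{e2.0.1} commutes --- or, in case (iv), anticommutes --- with one of the four basic operations, and in each case I would reduce the global statement to a local computation with the residue maps of \ref{ResidueMapsMoinsUn}, supplied by an axiom of Definition \ref{DefGeneralizedTransfers}. Since both sides of every identity decompose as sums over pairs of points, it is enough to fix a source point $x$ and a target point of codimension one higher and to compare the two matrix coefficients; by \ref{EquivalenceHomotopySheaves} these are determined by the sections on residue fields, so I may argue one valuation at a time, exactly as in \cite[Proposition 6.2]{Fel18}.

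For (i), fix $x\in X^{(n)}$, put $y=f(x)$ (of the same codimension, as $f$ is finite), and fix a target point $y'$ of codimension $n+1$ in $Y$. Unwinding $\partial^x_{\,\cdot}$ through the normalizations of $\overline{\{x\}}$ and $\overline{\{y\}}$, the coefficient of $d_Y\circ f_*$ from $x$ to $y'$ is $\sum_{z\mid y'}\Tr_{\kappa(z)/\kappa(y')}\circ\partial_z\circ\Tr_{\kappa(x)/\kappa(y)}$. Applying the residue--transfer axiom \ref{itm:eR3b} to the finite extension $\kappa(y)\to\kappa(x)$ at each valuation $z$, and then collapsing the two successive transfers with functoriality \ref{itm:eR1b}, turns this into a sum $\sum_w \Tr_{\kappa(w)/\kappa(y')}\circ\partial_w$ over the valuations $w$ of $\kappa(x)$ extending the chosen valuations of $\kappa(y)$. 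The coefficient of $f_*\circ d_X$ from $x$ to $y'$ is, after the same collapse, $\sum_{x'}\sum_{z'\mid x'}\Tr_{\kappa(z')/\kappa(y')}\circ\partial_{z'}$ over points $x'$ over $y'$ and valuations $z'$ of $\kappa(x)$. The two expressions agree once one knows that these two families of valuations of $\kappa(x)$ coincide, with matching residue fields. This matching --- that normalization is compatible with the finite map $\overline{\{x\}}\to\overline{\{y\}}$ and with passing to valuations lying over $y'$ --- is the one genuinely delicate point and the step I expect to be the main obstacle; it is the geometric input that makes \ref{itm:eR3b} and \ref{itm:eR1b} suffice.

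For (ii), the pullback $g^*$ of \ref{RostSchmidPullback} is assembled from restriction maps $\res_{\kappa(y)/\kappa(x)}$ and the canonical twist $\Theta$, so the identity reduces to the compatibility of residues with restriction along a valuation of ramification index one. This holds for essentially smooth $g$ because such a morphism pulls normal schemes back to normal schemes and induces unramified extensions of the local rings at codimension-one points, so no new ramification appears and the relative-dimension shift is harmless. For (iii), with $a$ a global unit the transfer part of $d$ commutes with $\ev{a}$ by \ref{itm:eR2b}, while the residue part commutes because $a$ has trivial valuation, giving $\partial_v(\ev{a}\cdot\rho)=\ev{\bar a}\cdot\partial_v(\rho)$ with $\bar a$ the residue of $a$; combining the two yields $d_X\circ\ev{a}=\ev{a}\circ d_X$. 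Finally, (iv) is formal: the anticommutative square $(*)$ of Proposition \ref{LocalizationContractedHI}, which is the octahedron axiom for the cohomology-with-support sequences, produces precisely the sign relating $d_Z\circ\partial^U_Z$ and $\partial^U_Z\circ d_U$, by the same bookkeeping that shows $C^*(X,M)$ is a complex.
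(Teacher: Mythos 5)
Your proposal is correct in substance, but for part (i) it takes a genuinely different route from the paper. The paper dispatches (ii)--(iv) as immediate from the definitions (as in \cite[Proposition 6.6]{Fel18}) and treats (i) as the one nontrivial assertion, which it proves not by a direct computation but by adapting Morel's proof that finite pushforward is a morphism of Rost--Schmid complexes (\cite[Corollary 5.30]{Mor12}), with the single substitution of \ref{itm:eR3b} for \cite[Theorem 5.19]{Mor12}. You instead run the classical Rost-style argument for (i) directly: compare matrix coefficients $x\rightsquigarrow y'$, unwind both sides through the normalizations, apply \ref{itm:eR3b} to $\kappa(y)\to\kappa(x)$ at each valuation centred at $y'$, collapse the composite transfers by \ref{itm:eR1b}, and then match the two resulting families of valuations of $\kappa(x)$. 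The matching you flag as the main obstacle is in fact standard: since $\overline{\{x\}}\to\overline{\{y\}}$ is finite, transitivity of integral closure identifies the normalization of $\overline{\{x\}}$ with the integral closure of the normalization of $\overline{\{y\}}$ in $\kappa(x)$, and codimension-one points over $y'$ correspond on both sides; note also that you need \ref{itm:eR3b} for the \emph{induced} transfers on the contractions $M_{-n}$, which is supplied by Example \ref{GeneralizedTransfersOnContraction}(2). What your approach buys is self-containedness at the level of the stated axioms, avoiding any appeal to Morel's Chapter 5 (and hence to the footnoted dependence on Conjecture \ref{ConjectureMoinsUn}, which both arguments ultimately replace by \ref{itm:eR1b}); what the paper's approach buys is that all the delicate bookkeeping with normalizations and twists is outsourced to a proof that has already been checked. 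Your treatments of (iii) and (iv) agree with the paper's intent; for (ii) you should also record the vanishing of the coefficients at points $y'$ lying in the fibre over $x$ itself (where the valuation restricts trivially to $\kappa(x)$), which follows from the cohomology-with-support sequence \ref{HILongExactSequence} as in Lemma \ref{eR3cdContraction}, but this is a minor omission in a sketch and the paper gives no more detail there.
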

\begin{proof}
As in \cite[Proposition 6.6]{Fel18}, $(ii),(iii)$ and $(iv)$ follow easily from the definitions. The assertion $(i)$ is nontrivial since our axioms are weaker than in the stable case studied in \cite[§6]{Fel18}. Fortunately, Morel proved that the map
\begin{center}
$g_*:C_*(Y,N_{-1})\to C_*(X,N_{-1})$
\end{center}
is a morphism of complexes when $N$ is a homotopy sheaf (see \cite[Corollary 5.30]{Mor12})\footnote{It seems that \textit{loc. cit.} depends on Conjecture \ref{ConjectureMoinsUn} which is true in our case thanks to \ref{itm:eR1b}.}. The proof can be adapted almost verbatim if we replace the contracted homotopy sheaf $N_{-1}$ by any sheaf with generalized transfers $M$ (the use of \cite[Theorem 5.19]{Mor12} is replaced by \ref{itm:eR3b}).
\end{proof}

\begin{Rem}

According to the previous proposition, the morphisms $f_*$ for $f$ finite, $g^*$ for $g$ essentially smooth, multiplication by $\ev{a}$ commute with the differentials. We use the same notations to denote the induced morphisms on the cohomology groups $A^*(X,M)$.

\end{Rem}

\begin{Par} {\sc Monoidal structure} 
\label{GeneralizedTransfersMonoidal} Let $M,M'\in \HIgtr(k)$ be two homotopy sheaves with respective generalized transfers $\Tr^{M'}$ and $\Tr^{M'}$. Consider the homotopy sheaf $M\otimes_\HI M'$ and define the transfer maps by
\begin{center}
$\Tr^{M\otimes_\HI M'}=\Tr^{M}\otimes_\HI \Tr^{M'}$
\end{center}
where $\otimes_{\HI}$ is the tensor product defined in \ref{HImonoidal}.
It is easy to check that this produces a monoidal structure on $\HIgtr(k)$ such that the forgetful functor
\begin{center}
$\HIgtr(k) \to \HI(k)$
\end{center}
is monoidal.

\end{Par}

\subsection{MW-transfers structure} \label{SubsectionMWTransfersStructure}

\begin{Def}
Let $M\in \HIgtr(k)$ be a homotopy sheaf with generalized transfers. For any smooth scheme $X$, denote by
\begin{center}
$\tilde{\Gamma}_*(M)(X)=A^0(X,M\otimes (\LLL_{X/k})^{\vee})$.
\end{center}
This  defines a presheaf $\tilde{\Gamma}_*(M)$.
\end{Def}

\begin{The} \label{HIgtrToHIMW}
Let $M\in \HIgtr(k)$ be a homotopy sheaf with generalized transfers. Then the contravariant functor $\tilde{\Gamma}_*(M):X\mapsto \tilde{\Gamma}_*(M)(X)$ induces a presheaf on $\Cortilde_k$.\footnote{See Subsection \ref{MWRecollection} for more details on MW-transfers.}
\end{The}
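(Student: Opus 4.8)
The plan is to assign to each finite Milnor--Witt correspondence an operation on the Rost--Schmid cohomology groups $A^0(-,M\otimes\LLL_{-/k}^\vee)$ and then to verify that the assignment is functorial. Recall from \cite[Chapter 2]{BCDFO} that $\Cortilde_k(X,Y)=\colim_T \CHt^{d_Y}_T(X\times Y,p_Y^*\LLL_{Y/k})$, where $d_Y=\dim Y$, the colimit runs over closed subsets $T\subset X\times Y$ finite and equidimensional over $X$, and $\CHt^*=A^*(-,\KMW)$ is the cohomology of the Rost--Schmid complex of Milnor--Witt K-theory. The $\GW$-module structure on $M$ together with the pairing $\kMW_1\times M_{-1}\to M$ of \ref{GWstructureContracted} makes $M$ a module over $\KMW$; passing to Rost--Schmid complexes and using that the basic maps are morphisms of complexes (Proposition \ref{Prop4.6eff}), this produces a cup-product pairing $A^p_T(W,\KMW\{\nu\})\otimes A^q(W,M\otimes\mu)\to A^{p+q}_T(W,M\otimes\mu\otimes\nu)$ for any scheme $W$, support $T$ and twists $\mu,\nu$.

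Given $\alpha\in\Cortilde_k(X,Y)$ represented on a support $T$ and a class $\rho\in\tilde{\Gamma}_*(M)(Y)=A^0(Y,M\otimes\LLL_{Y/k}^\vee)$, I would set $\alpha^*(\rho)=(p_X)_*\big(\alpha\cdot p_Y^*\rho\big)$, where $p_Y^*$ is the pullback of \ref{RostSchmidPullback}, the dot is the cup product above, and $(p_X)_*$ is the pushforward of \ref{RostSchmidPushforward} along the finite morphism $T\to X$. A short bookkeeping of twists shows $\LLL_{X\times Y/k}^\vee\otimes p_Y^*\LLL_{Y/k}\simeq p_X^*\LLL_{X/k}^\vee$, so that $\alpha\cdot p_Y^*\rho$ lies in $A^{d_Y}_T(X\times Y,M\otimes p_X^*\LLL_{X/k}^\vee)$; since $T$ is finite over $X$ its points of codimension $d_Y$ are exactly its generic points, the pushforward along $T\to X$ is defined and lands in $A^0(X,M\otimes\LLL_{X/k}^\vee)=\tilde{\Gamma}_*(M)(X)$. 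One must still check independence of the chosen support (compatibility with the colimit defining $\Cortilde_k$) and additivity in $\alpha$, both routine from the definitions; and because each constituent map descends to cohomology by Proposition \ref{Prop4.6eff}, $\alpha^*$ is a well-defined homomorphism of abelian groups.

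It then remains to prove functoriality: $(\Id_X)^*=\Id$ and $(\beta\circ\alpha)^*=\alpha^*\circ\beta^*$ for composable correspondences. The first is a direct computation: the diagonal representing $\Id_X$ reduces the formula to a transfer along an isomorphism, hence the identity by \ref{itm:eR1b} after the normalization $\ev{1}=\Id$. The composition identity is the main obstacle. Following the proofs for Voevodsky correspondences and for MW-cycle modules in \cite[\S6--7]{Fel18}, I would unwind both sides as operations on the triple product $X\times Y\times Z$ and reduce the equality to three inputs: the base-change formula $g^*f_*=f'_*g'^*$ together with functoriality of $f_*$ and $g^*$ (Proposition \ref{Prop4.1eff}); a projection formula exchanging the cup product with the finite pushforward; and the fact that the intersection product on $\CHt$ computes the composition law of MW-correspondences in \cite{BCDFO}. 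The projection formula and the compatibility of the product with the boundary maps are precisely where the generalized-transfer axioms enter: \ref{itm:eR2b} and \ref{itm:eR2c} supply the $\GW$-linearity of $\Tr$ needed for the projection formula, \ref{itm:eR1c} governs the base-change behaviour of transfers across the fibre product of the two supports, and \ref{itm:eR3b} guarantees that the product pairing is a morphism of complexes so that everything descends to cohomology. Keeping the orientations and twists coherent throughout this reduction --- rather than any single algebraic identity --- is the technical heart of the argument.
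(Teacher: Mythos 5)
Your proposal is correct and follows essentially the same route as the paper: the action is defined by the same formula $\alpha^*(\beta)=(p_X)_*(\alpha\cdot p_Y^*(\beta))$ (with well-definedness via Proposition \ref{Prop4.6eff} and Voevodsky's trick of restricting the projection to the finite support $T$), compatibility with the colimit over supports is checked, and composition is verified on the triple product $X\times Y\times Z$ using the base-change formula of Proposition \ref{Prop4.1eff} together with the projection formula \ref{itm:eR2b}. The only differences are cosmetic (you make the twist bookkeeping and the identity axiom explicit, which the paper leaves implicit).
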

\begin{proof}

Let $X,Y$ be two smooth schemes (which may be assumed to be connected without loss of generality) and $T\subset X\times Y$ be an admissible subset (see Definition \ref{DefAdmissiblSet}). Let $\beta\in \tilde{\Gamma}_*(M)(Y)$ and $\alpha \in \CHt^{d_Y}_T(X\times Y,\omega_Y)$. We set
\begin{center}
$\alpha^*(\beta)=(p_{X})_*(\alpha\cdot p_Y^{*}(\beta))$
\end{center}
where $p_X:T\to X$ and $p_Y:X\times Y \to  Y$ are the canonical morphisms, and where the product $\alpha\cdot p_Y^{*}(\beta)$ is defined by linearity thanks to \ref{RostSchmidGWaction} and Proposition <\ref{Prop4.6eff}(iii). We remark that the map $(p_X)_*$is well-defined\footnote{We have used Voevodsky's trick here: $X\times Y\mapsto X$ is not finite, but its restriction $p_X:T\to X$ is finite by assumption.} thanks to Proposition \ref{Prop4.6eff}(i). This yields to an application $\alpha^*$ which is additive.
\par If $T\subset T'$, we have a commutative diagram 
\begin{center}
$\xymatrix{
\CHt^{d_Y}_T(X\times Y,\omega_Y) \ar[r] \ar[rd]_{p_{X,*}}&
\CHt^{d_Y}_{T'}(X\times Y,\omega_Y) \ar[d]^{p_{X*}}
\\
 &
\CHt^0(X)
}$
\end{center}
with obvious morphisms. Thus $\alpha \mapsto \alpha^*$ defines a map $\Cortilde_k(X,Y)\to \Hom_{\mathcal{A}b}(\tilde{\Gamma}_*(M)(Y),\tilde{\Gamma}_*(M)(X))$. It remains to check that this map preserves the respective compositions. Consider the diagram
\begin{center}
$\xymatrix{
X\times Z  \ar@/^1pc/[rrrd]^{r_Z} \ar@/_1pc/[dddr]_{r_X}&
      	&
	&
	\\
	&
X\times Y \times Z   
\ar[ul]^{q_{XZ}}
\ar[r]^{q_{YZ}}
\ar[d]_{q_{XY}}&
Y\times Z 
\ar[r]^{q_Z}
\ar[d]^{p_Y}&
Z \\
	&
X\times Y 
\ar[r]_{q_Y}
\ar[d]_{p_X}&
Y &
	\\
	&
X. &
	&
	}$
\end{center}
Let $\alpha_1 \in \CHt^{d_Y}_{T_1}(X\times Y,\omega_Y)$ and $\alpha_2 \in \CHt^{d_Z}_{T_2}(X\times Z,\omega_Y)$ be two correspondences, with $T_1\subset X\times Y$ and $T_2\subset Y\times Z$ admissible. Moreover, let $\beta \in M(Z)$. By definition, we have
\begin{center}
$(\alpha_2\circ \alpha_1)^*(\beta)=(r_{X})_*[(q_{XY})_*(p^*_{XY}\alpha_1\cdot q^*_{YZ}\alpha_2)\cdot r^*_Z\beta].$
\end{center} 
Using the projection formula \ref{itm:eR2b}, we have
\begin{center}
$
\begin{array}{lcl}
(r_{X})_*[(q_{XY})_*(p^*_{XY}\alpha_1\cdot q^*_{YZ}\alpha_2)\cdot r^*_Z\beta] & = & (r_{X})_*[(q_{XY})_*(p^*_{XY}\alpha_1 \cdot q^*_{YZ}\alpha_2 \cdot q^*_{XY}r^*_Z\beta)] 
 \\
& = & 
(p_X)_*(p_{XY})_*(p^*_{XY}\alpha_1 \cdot q^*_{YZ} \alpha_2 \cdot q^*_{XZ}r^*_Z\beta).
\end{array} 
$

\end{center}
On the other hand,
\begin{center}
$
\begin{array}{lcl}
\alpha_1^*\circ \alpha_2^*(\beta)
 & = & \alpha_1^*((p_Y)_*(\alpha_2\cdot q^*_Z\beta)) 
 \\
& = & (p_X)_*(\alpha_1 \cdot q^*_Y(p_Y)_*(\alpha_2 \cdot q^*_Z\beta)) 
\end{array} 
$
\end{center}
By base change \ref{Prop4.1eff}, $q^*_Y(p_Y)_*=(p_{XY})_*q^*_{YZ}$ and it follows (using the projection formula once again) that
\begin{center}
$
\begin{array}{lcl}
\alpha_1^* \circ \alpha^*_2(\beta)
 & = & 
(p_X)_* (\alpha_1\cdot (p_{XY})_* (q^*_{YZ}\alpha_2\cdot q^*_{YZ} q^*_Z \beta)) 
 \\
& = &  (p_X)_*(p_{XY})_*(p^*_{XY}\alpha_1 \cdot q^*_{YZ} \alpha_2 \cdot q^*_{XZ}r^*_Z\beta).
\end{array} 
$
\end{center}
Hence the result.

\end{proof}

\par We have proved the following theorem.
 \begin{The} \label{MWModuleFM}
 
 Let $M\in \HIgtr(k)$ be a homotopy sheaf with generalized transfers. The presheaf $\tilde{\Gamma}_*(M)$ of abelian groups, defined by
 
 \begin{center}
 $\tilde{\Gamma}_*(M)(X)=A^0(X,M\otimes(\LLL_{X/k})^{\vee})$
 
 \end{center}
 for any smooth scheme $X/k$, is a MW-homotopy sheaf (see Definition \ref{DefMWhomotopySheaves}) canonically isomorphic to $M$ as presheaves. 
 \end{The}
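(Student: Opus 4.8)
The plan is to deduce the theorem from the already-established Theorem~\ref{HIgtrToHIMW} together with a single computation identifying the presheaf underlying $\tilde{\Gamma}_*(M)$ with $M$ itself. Indeed, Theorem~\ref{HIgtrToHIMW} already equips $\tilde{\Gamma}_*(M)$ with the structure of a presheaf on $\Cortilde_k$, that is, with MW-transfers; so to conclude that $\tilde{\Gamma}_*(M)$ is a MW-homotopy sheaf in the sense of Definition~\ref{DefMWhomotopySheaves} it suffices to exhibit a canonical isomorphism of presheaves $\tilde{\Gamma}_*(M)\simeq M$ on $\Sm$, and then to transport along it the facts that $M$ is a Nisnevich sheaf and is strictly $\AAA^1$-invariant.

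The heart of the matter is a twist cancellation at generic and codimension-one points. For $X\in\Sm$ connected with generic point $\xi$ and function field $F$, the group $\tilde{\Gamma}_*(M)(X)=A^0(X,M\otimes(\LLL_{X/k})^\vee)$ is by definition the kernel of the first differential $d^0\colon C^0\to C^1$ of the Rost-Schmid complex of $M\otimes(\LLL_{X/k})^\vee$ (see~\ref{e2.0.1}). Since $X$ is smooth one has $\LLL_{X/k}|_\xi\simeq\LLL_{F/k}$, so the degree-zero term simplifies to
\[
C^0(X,M\otimes(\LLL_{X/k})^\vee)\simeq M\big(F,\LLL_{F/k}\otimes\LLL_{F/k}^\vee\big)\simeq M(F).
\]
At a point $y\in X^{(1)}$ the conormal exact sequence, upon taking determinants, yields the canonical identification $\LLL_{\kappa(y)/k}\otimes\LLL_{X/k}^\vee|_y\simeq\LLL_y$, whence
\[
C^1(X,M\otimes(\LLL_{X/k})^\vee)\simeq\bigoplus_{y\in X^{(1)}}M_{-1}(\kappa(y),\LLL_y)\simeq\bigoplus_{y\in X^{(1)}}H^1_y(X,M),
\]
the last step using~\ref{ResidueMapsMoinsUn}. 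Under these identifications $d^0$ becomes the family of residue maps $\partial_y$, so that $\ker(d^0)=\bigcap_{y\in X^{(1)}}M(\OO_{X,y})=M(X)$ by axiom~\ref{itm:(2)}. This identification is natural in $X$: for an essentially smooth morphism, the Rost-Schmid pullback of~\ref{RostSchmidPullback} restricts on $A^0$ to the unramified restriction maps $\res$, which are precisely the restrictions of the presheaf $M$. One thus obtains the desired canonical isomorphism of presheaves $\tilde{\Gamma}_*(M)\simeq M$.

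Granting this isomorphism, the remaining properties are inherited for free. Being a homotopy sheaf, $M$ is in particular a strictly $\AAA^1$-invariant Nisnevich sheaf of abelian groups, hence so is the isomorphic presheaf $\tilde{\Gamma}_*(M)$. (Alternatively, the Nisnevich descent can be seen directly: for a Nisnevich square the Rost-Schmid complex splits off the contributions supported on the closed complement, and these match under the induced isomorphism on residue fields, so $C^*(-,M)$ sends the square to a cocartesian one, and passing to $A^0$ preserves the sheaf condition.) Combined with the MW-transfer structure of Theorem~\ref{HIgtrToHIMW}, this shows that $\tilde{\Gamma}_*(M)$ is a MW-homotopy sheaf, as claimed.

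The main obstacle I anticipate is the twist bookkeeping: one must check that the canonical isomorphism $\LLL_{\kappa(y)/k}\otimes\LLL_{X/k}^\vee|_y\simeq\LLL_y$ is exactly the one implicit in the residue-map identification $H^1_y(X,M)\simeq M_{-1}(\kappa(y),\LLL_y)$ of~\ref{ResidueMapsMoinsUn}, and that it is compatible with the differentials, so that $\ker(d^0)$ coincides with the unramified sections $M(X)$ on the nose rather than merely up to a non-canonical isomorphism. This is the delicate orientation point; once it is settled, everything else is a formal consequence of Theorem~\ref{HIgtrToHIMW} and the unramified sheaf axioms of Subsection~\ref{SubsectionUnramifiedSheaves}.
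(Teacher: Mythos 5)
Your proposal follows essentially the same route as the paper: the MW-transfer structure is supplied by Theorem~\ref{HIgtrToHIMW}, and the canonical identification $\tilde{\Gamma}_*(M)\simeq M$ as presheaves is exactly the comparison of the degree-zero Rost--Schmid cohomology with the unramified sections, which the paper simply cites from \cite[Theorem 5.41]{Mor12} while you sketch the twist-cancellation argument yourself. The orientation/twist bookkeeping you flag as the delicate point is precisely what that citation absorbs, so your argument is correct and only more explicit than the paper's.
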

 
\begin{proof}
This follows from Theorem \ref{HIgtrToHIMW} and the fact that the natural map $M\to \tilde{\Gamma}_*(M)$ agrees on the stalk level (see \cite[Theorem 5.41]{Mor12}).
\end{proof}

\begin{Par} \label{HIgtrFunctor}
A morphism of sheaves with generalized transfers commutes with the transfers $\Tr_{F/E}$ and the $GW$-action hence induces a natural transformation on the Rost-Schmid complex which commutes with the respective maps \ref{RostSchmidPushforward}, \ref{RostSchmidPullback} and \ref{RostSchmidGWaction} defined on the Rost-Schmid complex. After a careful examination of the proof of Theorem \ref{HIgtrToHIMW}, we can thus define a functor 
$$
\begin{array}{rcl}
\tilde{\Gamma}_*:\HIgtr(k) & \to & \HIMW(k) \\
M & \mapsto & \tilde{\Gamma}_*(M)
\end{array}
$$
which is conservative.

\end{Par}

We end this section with a lemma that will be useful later.
\begin{Lem} \label{LemHIgtr}
Let $M\in \HIgtr(k)$ be a homotopy sheaf with generalized transfers and let $\psi:E\to F$ be a finite extension of fields. For any finite model\footnote{More precisely, $X$ (resp. $Y$) is an irreducible smooth scheme with function field $E$ (resp. $F$) and the map $f:Y\to X$ is finite.} $f:Y\to X$ of $\psi$, we have defined in \ref{RostSchmidPushforward} a pushforward map
\begin{center}
$f_*:A^ 0(Y,M)\to A^0(X,M)$.
\end{center}
The limit of all such maps over finite models $f:Y\to X$ defines a map
\begin{center}
$M(F,\LLL_{F/k})\to M(E,\LLL_{E/k})$
\end{center}
which is equal to the generalized transfer map $\Tr_{F/E}$.

\end{Lem}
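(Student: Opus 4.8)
The plan is to reduce the statement to unwinding the definition of the Rost--Schmid pushforward in degree zero and then to identify the groups $A^0$ with field-theoretic sections in the colimit over models. First I would observe that for any finite model $f\colon Y\to X$ with $Y,X$ irreducible, the degree-zero terms of the Rost--Schmid complexes are concentrated at the generic points: since $Y^{(0)}=\{\eta_Y\}$ and $X^{(0)}=\{\eta_X\}$ with $\kappa(\eta_Y)=F$ and $\kappa(\eta_X)=E$, we have $C^0(Y,M)=M(F,\LLL_{F/k})$ and $C^0(X,M)=M(E,\LLL_{E/k})$. Because $f$ is finite and dominant it sends $\eta_Y$ to $\eta_X$, so by the definition of the pushforward in \ref{RostSchmidPushforward} the only nonzero component of $f_*\colon C^0(Y,M)\to C^0(X,M)$ is $(f_*)^{\eta_X}_{\eta_Y}=\Tr_{\kappa(\eta_Y)/\kappa(\eta_X)}$, which for $n=0$ is by construction the generalized transfer $\Tr_{F/E}$ of \ref{itm:eD2}. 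Thus, as a map of degree-zero cochain groups, $f_*$ \emph{is} literally $\Tr_{F/E}$.

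Next I would pass to the subgroups $A^0=\ker(d^0)$. By Proposition \ref{Prop4.6eff}(i) the map $f_*$ is a morphism of complexes, hence restricts to a map $A^0(Y,M)\to A^0(X,M)$, and by the previous paragraph this restriction is simply $\Tr_{F/E}$ restricted to $A^0(Y,M)$. Here I would invoke the characterization of unramified sheaves (property \ref{itm:(2)}) to identify $A^0(Y,M)$ with the subgroup of $M(F,\LLL_{F/k})$ consisting of those elements whose residue $\partial_y$ vanishes at every $y\in Y^{(1)}$, and likewise for $A^0(X,M)$ inside $M(E,\LLL_{E/k})$.

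Finally I would take the filtered colimit over the open subschemes $X'\subseteq X$, setting $Y'=f^{-1}(X')$ at each stage. Each $A^0(Y',M)$ is a subgroup of $M(F,\LLL_{F/k})$, and I claim $\colim_{X'}A^0(Y',M)=M(F,\LLL_{F/k})$ (and symmetrically $\colim_{X'}A^0(X',M)=M(E,\LLL_{E/k})$): given $x\in M(F,\LLL_{F/k})$, axiom \ref{itm:uA2} guarantees that $x$ has nontrivial residue at only finitely many points $y_1,\dots,y_r\in Y^{(1)}$; since $f$ is finite their images $f(y_i)$ are codimension-one points of $X$, so removing $\bigcup_i\overline{\{f(y_i)\}}$ yields an open $X'\ni\eta_X$ with $Y'=f^{-1}(X')$ omitting the $y_i$, whence $x\in A^0(Y',M)$. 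The colimit of the maps $f_*$ is therefore the transfer $\Tr_{F/E}\colon M(F,\LLL_{F/k})\to M(E,\LLL_{E/k})$, as asserted.

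The main obstacle is this last step: checking that the models of the form $f^{-1}(X')$ are cofinal enough that $\colim_{X'}A^0(Y',M)$ recovers all of $M(F,\LLL_{F/k})$. This rests on the compatibility of shrinking $Y$ with shrinking $X$ across the finite morphism $f$ (so that the codimension-one locus where a given section ramifies can be removed by an open of $X$) together with the finiteness-of-residues axiom \ref{itm:uA2}. Once this is in place, the identity of the colimit map with $\Tr_{F/E}$ is a direct consequence of the fact, established in the first paragraph, that in degree zero the pushforward is defined to be the generalized transfer.
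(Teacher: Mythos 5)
Your proposal is correct and is exactly the intended argument: the paper's own proof is just the one-line ``This follows from the definitions,'' and what you have written is the careful unwinding of those definitions (the degree-zero component of $f_*$ is $\Tr_{F/E}$ by construction in \ref{RostSchmidPushforward}, and axiom \ref{itm:uA2} guarantees the subgroups $A^0(Y',M)\subset M(F,\LLL_{F/k})$ exhaust everything in the colimit over models). No discrepancy with the paper's approach.
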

\begin{proof}
This follows from the definitions.
\end{proof}

\section{Morel's conjecture on Bass-Tate transfers}
\label{SectionMorelConjecture}

\subsection{Bass-Tate transfers} \label{SubsectionBassTateTransfers}

\begin{Par} \label{gtrMoinsUn}
Let $M$ be a homotopy sheaf and $M_{-1}$ its contraction. We recall the construction of the Bass-Tate transfer maps
\begin{center}
$\tr_{\psi}=\tr_{F/E}:M_{-1}(F,\LLL_{F/k}) \to M_{-1}(E,\LLL_{E/k})$
\end{center}
defined for any finite map $\psi:E\to F$ of fields.
\end{Par}

\begin{The}
Let $M\in \HI(k)$ be a homotopy sheaf. Let $F$ be a field and $F(t)$ the field of rational fractions with coefficients in $F$ in one variable $t$. We have a split short exact sequence

\begin{center}
$\xymatrix@C=10pt@R=20pt{
0 \ar[r] &  M(F) \ar[r]^-{\res}   & M(F(t)) \ar[r]^-d  &  \bigoplus_{x\in {(\AAA_F^1)}^{(1)}} \M(\kappa(x), \LLL_{x}) \ar[r] & 0
}
$
\end{center}
where $d=\bigoplus_{x\in {(\AAA_F^1)}^{(1)}}\partial_x$ is the usual differential (see \ref{e2.0.1}).
\end{The}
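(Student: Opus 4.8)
The plan is to realize $F(t)$ as the function field of the affine line $\AAA^1_F$, under which the closed points $x\in(\AAA^1_F)^{(1)}$ correspond to monic irreducible polynomials, the maps $\partial_x$ are the residues of \ref{ResidueMapsMoinsUn} (with the identification $H^1_x(\AAA^1_F,M)\simeq \M(\kappa(x),\LLL_x)$), and then to read off each part of the statement from the unramified axioms and the cohomology-with-support sequence \ref{HILongExactSequence}. For left exactness, I observe that $\res$ factors as $M(F)\xrightarrow{p^*}M(\AAA^1_F)\hookrightarrow M(F(t))$, where $p:\AAA^1_F\to\Spec F$ is the projection: the map $p^*$ is an isomorphism by strong $\AAA^1$-invariance (axiom \ref{itm:uA6}), and the restriction to the dense generic point is injective by the unramified property \ref{itm:(1)}; hence $\res$ is injective with image $M(\AAA^1_F)\subseteq M(F(t))$.

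For exactness in the middle, the residue sequence identifies $\ker\partial_x$ with $M(\OO_{\AAA^1_F,x})$ for each $x$ (since $H^1_x$ is the quotient $M(F(t))/M(\OO_{\AAA^1_F,x})$). Therefore
\[
\ker d=\bigcap_{x\in(\AAA^1_F)^{(1)}} M(\OO_{\AAA^1_F,x}),
\]
which by the unramified property \ref{itm:(2)} equals $M(\AAA^1_F)=\im(\res)$.

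The surjectivity of $d$ is the crux. Writing $M(F(t))=\colim_S M(\AAA^1_F\setminus S)$ over finite sets $S$ of closed points, I apply \ref{HILongExactSequence} to the closed immersion $S\hookrightarrow\AAA^1_F$ and use \ref{ResidueMapsMoinsUn} to obtain the exact sequence
\[
M(\AAA^1_F)\xrightarrow{j^*}M(\AAA^1_F\setminus S)\xrightarrow{\ \partial\ }\bigoplus_{x\in S}\M(\kappa(x),\LLL_x)\to H^1_{Nis}(\AAA^1_F,M).
\]
By strong $\AAA^1$-invariance $H^1_{Nis}(\AAA^1_F,M)\simeq H^1_{Nis}(\Spec F,M)$, and the latter vanishes because $\Spec F$ has Nisnevich cohomological dimension $0$. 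Hence $\partial$ is surjective for each $S$, and passing to the colimit over $S$ yields surjectivity of $d$. This vanishing of $H^1_{Nis}(\AAA^1_F,M)$ is exactly where the full force of strong (rather than merely sectionwise) $\AAA^1$-invariance enters, and I expect it to be the main point of the argument.

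Finally, for the splitting I would retract $\res$ by the specialization map at infinity. Let $v_\infty$ be the valuation on $F(t)$ at infinity, with uniformizer $1/t$ and residue field $F$. Fixing this uniformizer promotes the specialization into a map $s^{1/t}_\infty:M(F(t))\to M(F)$ defined on all of $M(F(t))$ (via the pairing of \ref{GWstructureContracted}, which splits the residue sequence at $v_\infty$). For $\rho\in M(F)$ the section $\res(\rho)$ lies in $M(\OO_{v_\infty})$ with $\partial_{v_\infty}(\res\rho)=0$, so $s^{1/t}_\infty(\res\rho)=\rho$; thus $s^{1/t}_\infty\circ\res=\id$ and the sequence splits.
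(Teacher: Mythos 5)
The paper does not actually prove this statement: its ``proof'' is the citation to Morel's Theorem 5.38, so there is no in-paper argument to compare against line by line. Your treatment of \emph{exactness} is correct and is the expected argument: injectivity of $\res$ from $\AAA^1$-invariance of $M$ together with the unramified property \ref{itm:(1)}; identification of $\ker d$ with $\bigcap_x M(\OO_{\AAA^1_F,x})=M(\AAA^1_F)=\im(\res)$ via \ref{itm:(2)}; and surjectivity of $d$ from the support sequence \ref{HILongExactSequence} plus purity \ref{ResidueMapsMoinsUn} and the vanishing $H^1_{Nis}(\AAA^1_F,M)\simeq H^1_{Nis}(\Spec F,M)=0$, which is indeed where strong $\AAA^1$-invariance carries the weight (strictly speaking one also needs the routine limit argument passing from smooth models of finite type to $\AAA^1_F$ itself, since the invariance axioms are stated for $X\in\Sm$).

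The \emph{splitting}, however, contains a genuine gap. You claim that fixing the uniformizer $1/t$ promotes $s_{v_\infty}$ to a map $s^{1/t}_\infty:M(F(t))\to M(F)$ defined on all of $M(F(t))$ ``via the pairing of \ref{GWstructureContracted}''. That pairing is $\kMW_1\times M_{-1}\to M$: it sends an element of the \emph{contraction} $M_{-1}$ to an element of $M$. The cycle-module formula $s_v^{\pi}=\partial_v\circ\gamma_{[-\pi]}$ therefore only typechecks when the sheaf being specialized is itself a contraction: for $\rho\in \M(F(t))$ one may form $[-1/t]\cdot\rho\in M(F(t))$ and then apply $\partial_{v_\infty}:M(F(t))\to \M(F,\LLL_{v_\infty})$, but for the arbitrary homotopy sheaf $M$ of the statement there is no pairing raising $M$ into a sheaf that contracts back to $M$, and the specialization $s_{v_\infty}$ of the unramified datum is defined only on the proper subgroup $M(\OO_{v_\infty})\subsetneq M(F(t))$. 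So your retraction does not exist as written; your argument would prove the splitting only after replacing $M$ by $\M$ throughout. The splitting is in fact the delicate part of Morel's theorem: one must produce, for each closed point $x$, a section of $\partial_x$ annihilated by the remaining residues (these are exactly the coresidue maps $\rho_x$ the paper extracts from the statement). For a rational point $a$ this is canonical, since $M(\AAA^1_F\setminus\{a\})\simeq M(\Gm)\simeq M(F)\oplus \M(F)$ by the very definition of the contraction; for a general closed point one has to reduce to this case (e.g.\ via a suitable Nisnevich neighbourhood of $x$ over $\AAA^1_{\kappa(x)}$) and correct the spurious residues, which is where the real work lies. Note finally that the paper's coresidues are normalized by $\partial_\infty\circ\rho_x=0$, a normalization your proposed retraction would not yield even if it were defined.
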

 \begin{proof}
See \cite[Theorem 5.38]{Mor12}\footnote{In fact, Morel does not use twisted sheaves but chooses a canonical generator for each $\LLL_x$ instead, which is equivalent.}.
\end{proof}

\begin{Def}[Coresidue maps]
Keeping the previous notations, the fact that the homotopy sequence is (canonically) split allows us to define {\em coresidue maps}
\begin{center}
$\rho_x:\M(\kappa(x),\LLL_{x})\to M(F(t))$
\end{center}
for any closed points $x\in {(\AAA_F^1)}^{(1)}$, satisfying $\partial_x\circ \rho_x=\Id_{\kappa(x)}$ and $\partial_y\circ \rho_x=0$ for $x\neq y$ where $y\in {(\AAA_F^1)}^{(1)}\cup \{\infty\}$.
\end{Def}
\begin{Def}[Bass-Tate transfers] \label{BassTateTransfersDefinition} 
Let $M\in \HI(k)$ be a homotopy sheaf. Let $F$ be a field and $F(t)$ the field of rational fractions with coefficients in $F$ in one variable $t$. For $x\in {(\AAA_F^1)}^{(1)}$, we define the Bass-Tate transfer
\begin{center}
$\tr_{x/F}:\M(F(x),\LLL_{F(x)/k})\to \M(F,\LLL_{F/k})$ 

\end{center}
by the formula $\tr_{x/F}=-\partial_{\infty}\circ \rho_x$.

\end{Def}

\begin{Rem}
There is also an equivalent definition of the Bass-Tate transfers that does not use the coresidue maps (see \cite[§4.2]{Mor12}.
\end{Rem}

\begin{Lem} \label{R3a}
Let $M\in \HI(k)$ be a homotopy sheaf. Let $\phi:E\to F$ be a field extension which induces a morphism, and $w$ be a valuation on $F$ which restricts to a non trivial valuation $v$ on $E$ with ramification $e$. We have a commutative square
\begin{center}
$\xymatrix{
M(E)
\ar[r]^-{\partial_v}
\ar[d]_{\phi_*}
&
\M(\kappa(v),\LLL_v)
\ar[d]^{e_{\epsilon}\cdot \bar{\phi}_*}
\\
M(F)
\ar[r]_-{\partial_w}
&
\M(\kappa(w),\LLL_w)
}$
\end{center} 
where $\overline{\phi}:\kappa(v)\to \kappa(w)$ is the induced map and $e_{\epsilon}=\sum_{i=1}^{e}\ev{-1}^{i-1}$.

\end{Lem}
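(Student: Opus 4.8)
The plan is to reduce the formula to a statement about the connecting maps of the cohomology-with-support sequence \ref{HILongExactSequence} for suitable essentially smooth local models, to treat the unramified and the totally ramified cases separately, and to extract the quadratic factor $e_\epsilon$ from the Milnor–Witt relation $[\varpi^e]=e_\epsilon\,[\varpi]$ via the $\kMW_1$-action on $\M$.

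First I would realize the data geometrically. Using Theorem \ref{EquivalenceHomotopySheaves} and continuity, choose essentially smooth local schemes $S=\Spec\OO_v$ and $S'=\Spec\OO_w$ together with the morphism $f:S'\to S$ induced by the inclusion $\OO_v\hookrightarrow\OO_w$, which realizes $\phi$ on generic points and $\bar\phi$ on closed points. Under the purity identification of \ref{ResidueMapsMoinsUn}, the maps $\partial_v$ and $\partial_w$ are the connecting maps of the sequences of \ref{HILongExactSequence} for the closed immersions of the closed points of $S$ and $S'$, while $\phi_*$ and $\bar\phi_*$ are the restriction maps induced by $f$ on the open and closed parts. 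The assertion then becomes the compatibility of $f^*$ with the two localization triangles, exactly the kind of comparison packaged in the diagram of Proposition \ref{LocalizationContractedHI}.

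In the unramified case $e=1$, the morphism $f$ pulls the closed divisor back to the closed divisor with multiplicity one, so naturality of the cohomology-with-support sequence with respect to $f^*$ (as in \ref{LocalizationContractedHI} and axiom \ref{itm:uA3(i)}) yields $\partial_w\circ\phi_*=\bar\phi_*\circ\partial_v$; since here $e_\epsilon=1$ this is the claimed equality. For general $w\mid v$ I would then work Nisnevich-locally: after passing to henselizations the extension factors as an unramified extension realizing the residue extension $\bar\phi$ followed by a totally ramified one, so by the case just treated and functoriality of restriction it suffices to handle the totally ramified situation $\kappa(v)\xrightarrow{\sim}\kappa(w)$ with $w(\pi)=e$ for a uniformizer $\pi$ of $v$.

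The hard part will be this totally ramified case, where the factor $e_\epsilon$ is genuinely created. Fixing uniformizers $\pi$ of $v$ and $\varpi$ of $w$ one has $\pi=u\varpi^{e}$ with $u\in\OO_w^\times$; these trivialize $\LLL_v$ and $\LLL_w$, and the discrepancy between $\bar\phi_*(\LLL_v)$ and $\LLL_w$ is precisely the passage from $\pi$ to $\varpi^{e}$. The plan is to produce the scalar through the external $\kMW_1$-module structure: multiplying by the relevant class of $\kMW_1$ detects the uniformizer, the identity $[\varpi^{e}]=e_\epsilon\,[\varpi]$ in $\kMW_1(F)$ supplies the factor $e_\epsilon\in\GW(\kappa(w))$, and Proposition \ref{eR3eHI}, which asserts that the $\kMW_1$-action commutes with restriction and with the residue maps, transports this scalar from $\kMW$ onto $\M$. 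The delicate point, since $M$ carries no multiplication of its own, is that the factor must be generated entirely through this module structure while keeping track of the twists $\LLL_v$, $\LLL_w$ and of the unit $u$; the main technical check is that the contributions of $[u]$ and of the differential of $u$ do not perturb the leading term $e_\epsilon\cdot\bar\phi_*\circ\partial_v$. Combining the two cases then gives the commutativity of the square.
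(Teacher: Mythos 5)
Your skeleton is the expected one (the paper itself does not prove this lemma but defers to \cite[\S 3]{Fel19}): localize at the two valuations, kill $M(\OO_v)$ using \ref{itm:uA3(i)}, and generate $e_\epsilon$ from $\phi(\pi)=u\varpi^{e}$ together with the Milnor--Witt identity $[\varpi^{e}]=e_\epsilon[\varpi]$. But as written the argument has a genuine gap: you never explain how the verification on an \emph{arbitrary} element of $M(E)$ is reduced to elements on which the $\kMW_1$-action can be exploited. Since $M$ carries no internal product, the statement ``multiplying by the relevant class of $\kMW_1$ detects the uniformizer'' only computes $\partial_w\circ\phi_*$ on elements of the special form $[\pi]\cdot\res_E(\mu)$ with $\mu\in\M(\OO_v)$. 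The missing structural input is the exact sequence $0\to M(\OO_v)\to M(E)\xrightarrow{\partial_v}\M(\kappa(v),\LLL_v)\to 0$ coming from purity and the vanishing of Nisnevich $H^1$ of the (henselian) local ring, together with the identity $\partial_v^{\pi}\bigl([\pi]\cdot\res_E(\mu)\bigr)=s_v(\mu)$ and the surjectivity of $s_v:\M(\OO_v)\to\M(\kappa(v))$; only these combined give $M(E)=M(\OO_v)+[\pi]\cdot\res_E\bigl(\M(\OO_v)\bigr)$ and hence license the check on the two kinds of elements. Without this decomposition the plan does not get started.

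Second, the tool you invoke to transport the scalar, Proposition \ref{eR3eHI}, concerns the action of a class $\alpha\in\kMW_1(X)$ defined on all of $X$, i.e.\ of a \emph{unit} of $\OO_w$; the uniformizer $\varpi$ is not such a unit, so that proposition does not yield the identities you actually need, namely $\partial_w^{\varpi}\bigl([\varpi]\cdot\res_F(\mu)\bigr)=s_w(\mu)$ and $\partial_w\bigl([u]\cdot\res_F(\mu)\bigr)=0$ for $u\in\OO_w^{\times}$. Relatedly, the twist bookkeeping you defer is not a routine check but the actual content of the totally ramified case: for $e>1$ the image of $\phi(\pi)$ in $\mathfrak{m}_w/\mathfrak{m}_w^{2}$ is zero, so $\phi$ induces no canonical comparison $\bar\phi^{*}\LLL_v\to\LLL_w$; one must fix the trivializations by $\pi$ and $\varpi$, expand $[u\varpi^{e}]=[u]+\ev{u}\,e_\epsilon[\varpi]$, and verify that the resulting $\ev{\bar u}$ cancels exactly against the change of trivialization implicit in the statement of the lemma. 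Finally, the factorization of $\OO_v^{h}\to\OO_w^{h}$ into an unramified extension realizing $\bar\phi$ followed by a totally ramified one is asserted without construction; it is available here ($k$ perfect, Gauss valuations for the transcendental part of $\kappa(w)/\kappa(v)$, Hensel lifting for the finite separable part), but it needs to be said, since $\phi$ is not assumed finite.
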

\begin{proof}
See \cite[§3]{Fel19}
\end{proof}

We now prove a base change formula (see also \cite[Claim 3.10]{Fel18} for a similar result). The proof is similar to the original case where $M$ represents Milnor K-theory (see \cite[§1]{BassTate73}, or \cite[§7]{GS17}).

\begin{Lem}\label{R1c_faible}
Let $M\in \HI(k)$ be a homotopy sheaf. Let $F/E$ be a field extension and $x\in {(\AAA_E^1)}^{(1)}$ a closed point. Then the following diagram
\begin{center}
$\xymatrixcolsep{5pc}\xymatrix{
\M(E(x),\LLL_{E(x)/k}) \ar[r]^-{\tr_{x/E}}   \ar[d]_{\oplus_y \res_{F(y)/E(x)}} 
&
 \M(E,\LLL_{E/k}) \ar[d]^{\res_{F/E}} 
\\
\bigoplus_{y\mapsto x} \M(F(y),\LLL_{F(y)/k}) \ar[r]_-{\sum_{y}e_{y,\epsilon}\tr_{y/F}} & \M(F,\LLL_{F/k}) 
}$
\end{center}
is commutative, where  the notation $y\mapsto x$ stands for the closed points of $ {(\AAA_F^1)}$ lying above $x$, and $e_{y,\epsilon}=\sum_{i=1}^{e_y}\ev{-1}^{i-1}$ is the quadratic form associated to the ramification index of the valuation $v_y$ extending $v_x$ to $F(t)$.
\end{Lem}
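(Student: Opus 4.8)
The plan is to evaluate both composites on a class $\alpha\in\M(E(x),\LLL_{E(x)/k})$ by unwinding the Bass--Tate transfers through their coresidue description, reducing the identity to a comparison of two explicit elements of $M(F(t))$ after applying the residue $\partial_\infty$ at the point at infinity.

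First I would rewrite each side. Since $\tr_{x/E}=-\partial_\infty\circ\rho_x$ with $\rho_x$ the coresidue over $E$, and since the extension $E(t)\to F(t)$ is unramified at $\infty$ (ramification index $1$, residue extension $F/E$), Lemma \ref{R3a} at $\infty$ gives $\res_{F/E}\circ\partial_\infty^E=\partial_\infty^F\circ\res_{F(t)/E(t)}$, whence
$$\res_{F/E}\circ\tr_{x/E}=-\partial_\infty^F\circ\big(\res_{F(t)/E(t)}\circ\rho_x^E\big).$$
On the other hand $\tr_{y/F}=-\partial_\infty^F\circ\rho_y^F$ gives directly
$$\Big(\sum_{y}e_{y,\epsilon}\tr_{y/F}\Big)\circ\Big(\oplus_{y}\res_{F(y)/E(x)}\Big)=-\partial_\infty^F\circ\Big(\sum_{y}e_{y,\epsilon}\,\rho_y^F\circ\res_{F(y)/E(x)}\Big).$$
It therefore suffices to show that the two elements
$$\xi:=\res_{F(t)/E(t)}\big(\rho_x^E(\alpha)\big),\qquad \zeta:=\sum_{y}e_{y,\epsilon}\,\rho_y^F\big(\res_{F(y)/E(x)}(\alpha)\big)$$
of $M(F(t))$ have the same image under $\partial_\infty^F$.

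I would prove the stronger claim that $\xi-\zeta$ is a constant, i.e. lies in the image of $\res_{F(t)/F}$; then $\partial_\infty^F$ kills it (residues of constants vanish) and we are done. By the split homotopy exact sequence, a class in $M(F(t))$ is a constant exactly when all its residues at the finite closed points $y'\in(\AAA^1_F)^{(1)}$ vanish, so I compute $\partial_{y'}\xi$ and $\partial_{y'}\zeta$ for each such $y'$. The coresidue relations $\partial_{y'}\rho_y^F=\delta_{y',y}\,\Id$ give $\partial_{y'}\zeta=e_{y',\epsilon}\res_{F(y')/E(x)}(\alpha)$ when $y'$ lies over $x$ and $\partial_{y'}\zeta=0$ otherwise. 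For $\xi$, let $x'$ be the image of $y'$ in $\AAA^1_E$. If $x'$ is a closed point, Lemma \ref{R3a} at $v_{y'}$ (ramification $e_{y'}$, residue extension $F(y')/E(x')$) gives $\partial_{y'}\xi=e_{y',\epsilon}\res_{F(y')/E(x')}\big(\partial_{x'}^E\rho_x^E(\alpha)\big)$, which by $\partial_{x'}^E\rho_x^E=\delta_{x',x}\,\Id$ equals $\partial_{y'}\zeta$ in every case. If instead $v_{y'}$ restricts to the trivial valuation on $E(t)$ (so $y'$ lies over no closed point of $\AAA^1_E$, which can only happen when $F/E$ is not finite), then axiom \ref{itm:uA3(ii)} shows $\res_{F(t)/E(t)}(\rho_x^E(\alpha))\in M(\OO_{v_{y'}})$, so $\partial_{y'}\xi=0=\partial_{y'}\zeta$. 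Hence all finite residues of $\xi-\zeta$ vanish, $\xi-\zeta$ is a constant, and $\partial_\infty^F(\xi-\zeta)=0$.

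The steps I expect to be routine are the bookkeeping of the canonical twist isomorphisms for $\LLL_{v}$ and $\LLL_{\kappa(x)/k}$ accompanying each residue and restriction, which match as in the untwisted Milnor $K$-theory computation of \cite[\S1]{BassTate73}, and the reduction to finitely generated subextensions by continuity. The genuine subtleties are twofold: replacing the classical integer ramification indices by the quadratic forms $e_{y,\epsilon}$, which is exactly the content furnished by Lemma \ref{R3a}; and the treatment of finite closed points of $\AAA^1_F$ whose image in $\AAA^1_E$ is the generic point, which forces the use of axiom \ref{itm:uA3(ii)} and is the only place where the non-finiteness of $F/E$ intervenes.
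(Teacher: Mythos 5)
Your proof is correct and follows essentially the same route as the paper: both reduce the identity, via the coresidue description $\tr=-\partial_\infty\circ\rho$, to checking that $\res_{F(t)/E(t)}\circ\rho_x^E$ and $\sum_y e_{y,\epsilon}\,\rho_y^F\circ\res_{F(y)/E(x)}$ agree up to something killed by all finite residues (hence by $\partial_\infty$), with Lemma \ref{R3a} supplying the key commutation of residues with restriction. Your write-up merely makes explicit two points the paper leaves implicit, namely the unramifiedness at $\infty$ needed to move $\res_{F/E}$ past $\partial_\infty$, and the finite points of $\AAA^1_F$ lying over the generic point of $\AAA^1_E$, handled via \ref{itm:uA3(ii)}.
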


\begin{proof}
According to Lemma \ref{R3a}, the following diagram
\begin{center}
$\xymatrix{
M(E(t)) \ar[r]^{\partial_x}  \ar[d]_{\res_{F(t)/E(t)}}
& \M(E(x),\LLL_x) \ar[d]^{\oplus_y e_{y,\epsilon} \res_{F(y)/E(x)}} 
\\
M(F(t)) \ar[r]_-{\oplus_y \partial_y} & \bigoplus_{y\mapsto x} \M(F(y),\LLL_y)
}$
\end{center} 
is commutative hence so does the diagram
\begin{center}
$\xymatrix{
M(E(t))   \ar[d]_{\res_{F(t)/E(t)}}
& \M(E(x),\LLL_x) \ar[d]^{\oplus_y e_{y,\epsilon} \res_{F(y)/E(x)}} \ar[l]_{\rho_x} \\
M(F(t))  &
 \bigoplus_{y\mapsto x} \M(F(y),\LLL_y)\ar[l]^-{\oplus_y \rho_y}
}$
\end{center} 
since the difference is killed by all $\partial_y$ for $y\in {(\AAA_F^1)}^{(1)}\cup \{\infty\}$. Then, we conclude according to the definition of the Bass-Tate transfer maps \ref{BassTateTransfersDefinition}.
\end{proof}

\begin{Rem}
The multiplicities $e_y$ appearing in the previous lemma are equal to 
\begin{center}
${[E(x):E]_i/[F(y):F]_i}$
\end{center} 
where $[E(x):E]_i$ is the inseparable degree.
\end{Rem}

\begin{Lem}\label{R2bMoinsUn}
Let $M\in \HI(k)$ be a homotopy sheaf. Let $\phi:E\to F=E(x)$ be a simple extension. Then
\begin{enumerate}

\item For $\ev{a}\in \GW(E)$ and $\mu \in M(F,\LLL_{F/k})$, one has	$\tr_{x/E}\big(\ev{\psi(a)}\cdot \mu\big)=\ev{a}\cdot \tr_{x/E}(\mu)$.

\item For $\ev{a}\in \GW(F,\LLL_{F/k})$ and $\mu \in M(E)$, one has 	$\tr_{x/E}\big(\ev{a}\cdot \res_{F/E}(\mu)\big)=\tr_{x/E}(\ev{a})\cdot \mu$.

\end{enumerate} 
\end{Lem}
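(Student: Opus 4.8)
The plan is to reduce both identities to the defining formula $\tr_{x/E}=-\partial_\infty\circ\rho_x$ of the Bass--Tate transfer (Definition \ref{BassTateTransfersDefinition}) and to push the quadratic factor through the coresidue $\rho_x$ and the residue $\partial_\infty$. The structural input I need is that the multiplicative structure is compatible with residues: for a unit $u$ which is regular at a valuation $v$, multiplication by $\ev u=1+\eeta[u]$ commutes with $\partial_v$ up to replacing $u$ by its image in $\kappa(v)^\times$, and, more generally, the bilinear pairing $\kMW_1\times M_{-1}\to M$ of \ref{GWstructureContracted} satisfies a Leibniz rule $\partial_v(\alpha\cdot\beta)=\partial_v(\alpha)\cdot s_v(\beta)$ whenever $\beta$ is regular at $v$. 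Both are instances of the commutation of the multiplication map with the residue recorded in Proposition \ref{eR3eHI}. Applied to the characterization of the coresidue by its residues, this also shows that $\rho_x$ is compatible with multiplication by constants.

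For assertion (1), take $a\in E^\times$. Then $\ev a$ is a constant, i.e. the restriction of a global unit on $\PP^1_E$; it is regular at every point of $(\AAA^1_E)^{(1)}\cup\{\infty\}$, with image $\psi(a)$ in $\kappa(x)=F$ and image $a$ in $\kappa(\infty)=E$. By Proposition \ref{eR3eHI} the classes $\rho_x(\ev{\psi(a)}\cdot\mu)$ and $\ev a\cdot\rho_x(\mu)$ have the same residue at each finite point of $\AAA^1_E$ (namely $\ev{\psi(a)}\cdot\mu$ at $x$ and $0$ elsewhere), hence differ by an element of $\res_{E(t)/E}(M(E))$, which is regular at infinity and therefore killed by $\partial_\infty$. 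Applying $-\partial_\infty$ and using once more that $\ev a$ is regular at $\infty$ with image $a$ there gives
\[
\tr_{x/E}(\ev{\psi(a)}\cdot\mu)=-\partial_\infty\bigl(\ev a\cdot\rho_x(\mu)\bigr)=-\ev a\cdot\partial_\infty\rho_x(\mu)=\ev a\cdot\tr_{x/E}(\mu).
\]

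For assertion (2), the element $a\in F^\times=\kappa(x)^\times$ is no longer constant, and $\tr_{x/E}(\ev a)$ is to be read as the Bass--Tate transfer on $\GW=(\KMW_1)_{-1}$ (Example \ref{GeneralizedTransfersOnContraction}(1)); the identity then expresses the compatibility of the pairing $\kMW_1\times M_{-1}\to M$ with transfers. The strategy is to move the constant $\mu\in M_{-1}(E)$, rather than $\ev a$, across the construction. Writing $\rho'_x$ for the coresidue computing the $\GW$-transfer, I would compare $\rho_x(\ev a\cdot\res_{F/E}\mu)\in M(E(t))$ with the product $\rho'_x(\ev a)\cdot\res_{E(t)/E}(\mu)$ formed by the pairing. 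By the Leibniz rule above (with the regular factor $\res_{E(t)/E}\mu$) these two classes have equal residues at every finite point --- value $\ev a\cdot\res_{F/E}\mu$ at $x$ and $0$ elsewhere --- so they agree modulo $\res_{E(t)/E}(M(E))$. Applying $-\partial_\infty$, together with the Leibniz rule at $\infty$ where $\res_{E(t)/E}\mu$ specializes to $\mu$, yields $\tr_{x/E}(\ev a\cdot\res_{F/E}\mu)=\bigl(-\partial_\infty\rho'_x(\ev a)\bigr)\cdot\mu=\tr_{x/E}(\ev a)\cdot\mu$.

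The main obstacle is assertion (2): unlike (1), it is not merely a \emph{constants pass through} statement. One must pin down the meaning of $\tr_{x/E}(\ev a)$ via the $(\KMW_1)_{-1}$-structure and establish the residue Leibniz rule for the pairing $\kMW_1\times M_{-1}\to M$, verifying in particular that the pulled-back class $\res_{E(t)/E}\mu$ is regular with the expected specialization both at the finite points and at infinity. Assertion (1) is then the degenerate case in which the quadratic factor itself is a constant from the base field $E$. Once the residue-compatibility of Proposition \ref{eR3eHI} is available, the residue bookkeeping above closes both arguments.
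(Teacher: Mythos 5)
Your argument is correct in outline, but it is worth noting that the paper itself does not argue at all: its proof of this lemma is the single line ``this follows by $\GW$-linearity from the definitions'', deferring to \cite[Lemma 5.24]{BachmannYakerson18}. What you have written is essentially the classical Bass--Tate residue bookkeeping that the cited reference encapsulates: characterize $\rho_x(\nu)$ up to $\res_{E(t)/E}(M(E))$ by its residues at the finite points, observe that this ambiguity is killed by $\partial_\infty$, and then commute the quadratic factor past $\partial_\infty$. For assertion (1) this is complete and correct: $\ev a$ is a unit everywhere on $\PP^1_E$, and $\partial_v\circ\ev u=\ev{\bar u}\circ\partial_v$ for $u$ a $v$-unit follows from $\partial_v\circ[u]=\epsilon[u]\circ\partial_v$ (the paper's Lemma \ref{eR3eContraction}) together with $\eta\epsilon=\eta$. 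Your reconstruction buys an actual self-contained proof where the paper only offers a pointer; the trade-off is that you must supply the residue formulas yourself.

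The one place where you are leaning on something not yet established is assertion (2). You correctly identify the needed input --- the Leibniz rule $\partial_v(\alpha\cdot\beta)=\partial_v(\alpha)\cdot s_v(\beta)$ for $\alpha\in\GW$ arbitrary and $\beta$ unramified at $v$ --- but Proposition \ref{eR3eHI}, which you cite for it, does not literally give it: that proposition concerns the map $\gamma_\alpha:\M\to M$ for a \emph{global} $\alpha\in\kMW_1(X)$ commuting with the localization sequence, whereas here $\alpha=\rho'_x(\ev a)$ is a rational class ramified at $x$ and at $\infty$. The rule you need is the standard one from Morel's module structure: for $u=w\pi^n$ with $w$ a $v$-unit one has $\partial_v(\ev{u}\cdot\beta)=\eta\,\partial_v([u]\cdot\beta)=\partial_v^{\pi}([u])\cdot s_v(\beta)\cdot\eta=\partial_v(\ev u)\cdot s_v(\beta)$ when $\partial_v\beta=0$, using $\partial_v([\pi]\cdot\beta)=s_v(\beta)$ and $\partial_v([w]\cdot\beta)=\epsilon[\bar w]\partial_v(\beta)=0$. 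This is exactly the content of \cite[Lemma 5.24]{BachmannYakerson18} (or Morel's Lemma 5.10 and its companions), so your proof is not wrong, but to close it you should either prove this identity from Lemmas \ref{eR3cdContraction} and \ref{eR3eContraction} or cite it explicitly rather than attributing it to Proposition \ref{eR3eHI}. Finally, your reading of $\tr_{x/E}(\ev a)$ through $\GW=(\kMW_1)_{-1}$ and a coresidue $\rho'_x$ for $\kMW_1$ is the intended one, and it silently uses the (true, but worth stating) fact that this Bass--Tate transfer on $\GW$ agrees with the transfer acting in the projection formula.
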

\begin{proof}
This follows by $\operatorname{GW}$-linearity from the definitions (see \cite[Lemma 5.24]{BachmannYakerson18}).
\end{proof}

\begin{Def}
Let $M\in \HI(k)$ be a homotopy sheaf. We denote by $M_{\QQ}$ the homotopy sheaf defined by
\begin{center}
$X\mapsto M(X)\otimes_{\ZZ} \QQ$
\end{center}
and by $M_{-1,\QQ}$ the homotopy sheaf $(M_{\QQ})_{-1}$.
\end{Def}

\begin{Cor} \label{Lem1.8}
Let $M\in \HI(k)$ be a homotopy sheaf. Let $\phi:E\to F=E(x)$ be a simple extension. The kernel of the restriction map $\res_{F/E}:\M(E)\to \M(F)$ is killed by $\tr_{x/E}(1)$. In particular: if the Hopf map $\eeta$ acts trivially on $\M$, then the restriction map $\res_{F/E}:\MQ(E)\to \MQ(F)$ is injective.
\end{Cor}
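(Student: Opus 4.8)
The plan is to obtain the first assertion as an immediate consequence of the projection formula \ref{R2bMoinsUn}(2) and then to rationalise in order to deduce the injectivity statement. I would first apply Lemma \ref{R2bMoinsUn}(2) with $\ev{a}=\ev{1}$: for every $\mu\in\M(E)$ it yields
\begin{center}
$\tr_{x/E}\big(\res_{F/E}(\mu)\big)=\tr_{x/E}(1)\cdot\mu$,
\end{center}
so that whenever $\mu\in\ker(\res_{F/E})$ the left-hand side vanishes and hence $\tr_{x/E}(1)\cdot\mu=0$; that is, the kernel of $\res_{F/E}$ is annihilated by the element $\tr_{x/E}(1)\in\GW(E)$ acting through the $\GW$-module structure of \ref{GWstructureContracted}. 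This computation is purely formal and survives tensoring with $\QQ$ unchanged.

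For the ``in particular'', I would note that contraction, restriction, transfers and the $\GW$-action are all $\ZZ$-linear and hence commute with $-\otimes_\ZZ\QQ$, so the displayed identity still gives $\tr_{x/E}(1)\cdot\mu=0$ for $\mu\in\ker(\res_{F/E}\colon\MQ(E)\to\MQ(F))$. It then remains to identify how $\tr_{x/E}(1)$ acts once $\eeta$ is trivial. Using the relation $\ev{a}=1+\eeta\,[a]$ in $\kMW_0(E)=\GW(E)$, the rank (augmentation) ideal of $\GW(E)$ is generated by the elements $\ev{a}-1=\eeta\,[a]$; for $m\in\M(E)$, $\kMW$-linearity gives $(\eeta\,[a])\cdot m=\eeta\cdot([a]\cdot m)$ with $[a]\cdot m\in M(E)$ via the pairing of \ref{GWstructureContracted}, and this vanishes because $\eeta$ acts trivially. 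Hence the $\GW$-module structure on $\M$ factors through the rank $\GW\to\ZZ$, so $\tr_{x/E}(1)$ acts as multiplication by its rank. Since the rank map is compatible with transfers, this rank is the Milnor transfer of $1$, namely the positive integer $[F:E]$; being invertible in $\QQ$, it forces $\mu=0$. Equivalently, $\tfrac1{[F:E]}\,\tr_{x/E}$ splits $\res_{F/E}$ rationally.

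The routine ingredients here are the projection formula and the exactness of rationalisation. The only step that deserves genuine care is the behaviour of $\tr_{x/E}(1)$ under $\eeta$-triviality: one must argue that the relation $\ev{a}=1+\eeta[a]$ makes the $\GW$-action collapse to the rank once $\eeta$ acts by zero, and that the rank of the transfer of the unit equals the non-zero degree $[F:E]$. Granting these two facts, the conclusion is formal.
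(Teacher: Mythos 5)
Your argument is correct and takes essentially the same route as the paper's own (much terser) proof: apply the projection formula of Lemma \ref{R2bMoinsUn}(2) with $\ev{a}=\ev{1}$ to see that $\ker(\res_{F/E})$ is killed by $\tr_{x/E}(1)$, then use that $\eeta$-triviality forces $\tr_{x/E}(1)$ to act on $\MQ(E)$ as multiplication by the nonzero integer $[F:E]$, which is invertible in $\QQ$. The details you supply --- the relation $\ev{a}=1+\eeta[a]$ collapsing the $\GW$-action to the rank, and the identification of the rank of the transferred unit with $[F:E]$ --- are precisely the facts the paper leaves implicit.
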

\begin{proof}
This follows from the previous projection formula and the fact that, if $ \eeta$ acts trivially, then the action of $\tr_{x/E}(1)$ on $\MQ(E)$ is the multiplication by a nonzero natural number.
\end{proof}

\begin{Def} \label{DefGeometricTransfers}
Let $F=E(x_1,x_2,\dots, x_r)$ be a finite extension of a field $E$ and consider the chain of subfields
\begin{center}

$E\subset E(x_1)\subset E(x_1,x_2)\subset \dots \subset E(x_1,\dots , x_r)=F.$
\end{center}
Define by induction:
\begin{center}
$\tr_{x_1,\dots,x_r/E}=\tr_{x_r/E(x_1,\dots,x_{r-1})}\circ \dots \circ \tr_{x_2/E(x_1)}\circ \tr_{x_1/E}$
\end{center}
\end{Def}

\begin{Conj}[Morel conjecture] \label{ConjectureMoinsUn}
Keeping the previous notations, the maps $\tr_{x_1,\dots,x_r/E}:\M(F,\LL_{F/k})\to \M(E,\LL_{E/k})$ do not depend on the choice of the generating system $(x_1,\dots , x_r)$.

\end{Conj}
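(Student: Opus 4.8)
The plan is to adapt Kato's proof of well-definedness of the norm on Milnor K-theory to Morel's quadratic setting, the central tool being a reciprocity law on $\PP^1$. First I would establish, for any field $E\in\FFF_k$ and any $\alpha\in M(E(t))$, the relation $\tr_{\infty/E}(\partial_\infty\alpha)+\sum_{x\in(\AAA^1_E)^{(1)}}\tr_{x/E}(\partial_x\alpha)=0$, where $\tr_{\infty/E}$ at the degree-one point $\infty$ is the identity up to the canonical twist. This is essentially a repackaging of the split short exact sequence together with the definition $\tr_{x/E}=-\partial_\infty\circ\rho_x$, and it encodes the whole Bass-Tate construction as a single reciprocity statement on $\PP^1_E$ that behaves functorially under the residue and transfer maps of Lemmas \ref{R3a} and \ref{R1c_faible}.

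Next I would carry out the combinatorial reduction: any two generating systems of $F/E$ are connected by a finite chain of elementary moves (reordering adjacent generators, inserting or deleting a redundant generator, and replacing a primitive element of a simple subextension), so by the very definition of the iterated transfer in \ref{DefGeometricTransfers} it suffices to treat (i) the case $F=E(x)=E(y)$ of a simple extension presented by two different primitive elements, and (ii) the commutation $\tr_{y/E(x)}\circ\tr_{x/E}=\tr_{x/E(y)}\circ\tr_{y/E}$ for a two-generator extension $F=E(x,y)$, i.e. the symmetry of the two-variable geometric transfer on $\AAA^2_E$. Case (i) I would deduce from the $\PP^1$-reciprocity above by the trick of Bass and Tate: introduce an auxiliary variable, express both $\tr_{x/E}$ and $\tr_{y/E}$ through residues on $\PP^1_F$, and match them using Lemma \ref{R1c_faible}. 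Case (ii) is the heart of the matter and amounts to a reciprocity on $\PP^1\times\PP^1$ comparing the two iterated pushforwards.

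The main obstacle is precisely case (ii), which is where the single contraction $\M=M_{-1}$ is too weak. Comparing the two orders of iterated transfer produces, via the switch of the two $\Gm$-factors on $\Gm^{\wedge 2}$, a discrepancy measured by the action of $\eeta$ and the associated quadratic signs; on a two-fold contraction $M_{-2}=\Hominter(\Gm^{\wedge 2},M)$ this switch is an honest symmetry one can control, which is exactly Morel's argument for $M_{-2}$ in \cite[Remark 4.31]{Mor12}, but on $\M$ there is no second $\Gm$ to absorb it. To overcome this integrally I would try to realize $\M$ as a retract, natural in $M$, of a two-fold contraction: using the canonical map $\Gm\otimes\M\to\M$ and the identification $\M\simeq(\Gm\otimes\M)_{-1}$, one hopes to present $\Gm\otimes\M$ as $P_{-1}$ for a suitable $P$, so that $\M\simeq P_{-2}$ and Morel's functoriality on two-fold contractions applies directly, provided one checks that the Bass-Tate transfers on $\M$ coincide with the canonical transfers transported from $P_{-2}$. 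Failing a clean delooping, the alternative is a direct attack on the two-generator symmetry using the Bass-Tate-Morel generators $\eeta^m\cdot[p_1(x),\dots,p_n(x)]$ of \cite[Lemma 3.25.1]{Mor12} together with the explicit relations in $\kMW$; this is where the integral $2$-torsion genuinely enters, and controlling it without inverting $2$ is the crux that separates the integral conjecture from the rational theorem proved in this paper.
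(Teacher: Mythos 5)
The statement you set out to prove is, in the paper itself, an \emph{open conjecture}: the environment that follows Conjecture \ref{ConjectureMoinsUn} is not a proof but an attribution (Morel claimed it in \cite[Remark 4.31]{Mor12} and \cite[Remark 5.10]{Mor11}), and the paper only establishes the rational case. It does so by a mechanism you mention only obliquely: split $\MQ$ into a plus part, on which $\eeta$ acts trivially and where functoriality of the Bass--Tate transfers follows from the strong base change formula (Theorem \ref{R1c_fort}, applied with $L=\overline{E}$) combined with injectivity of restriction maps (Corollary \ref{Lem1.8} and Proposition \ref{Lem1.11}), and a minus part, on which $\eeta$ is invertible, so that $\MQ^-$ is isomorphic to its own contraction, hence is an iterated contraction to which Morel's integral functoriality for two-fold contractions applies. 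Your proposal correctly reconstructs the standard reductions --- the reciprocity on $\PP^1_E$ encoding the split exact sequence and the coresidues, and the reduction via insertion/deletion of redundant generators and adjacent swaps to the two-generator symmetry $\tr_{y/E(x)}\circ\tr_{x/E}=\tr_{x/E(y)}\circ\tr_{y/E}$ --- and correctly locates the obstruction in the switch on $\Gm^{\wedge 2}$, which a single contraction cannot absorb. But neither of your two closing moves closes that gap, so this is a program, not a proof.

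Concretely: your first escape route, realizing $\M$ naturally as a two-fold contraction $P_{-2}$, is circular. By the paper's Theorem \ref{EssImageTransfers}, for a homotopy sheaf $N$ the existence of $M''$ with $N\simeq M''_{-2}$ is \emph{equivalent} to the existence of generalized transfers on $N$; producing such a structure on $\M$ is exactly what Conjecture \ref{ConjectureMoinsUn} asks for. Moreover your candidate built from $\Gm\otimes\M$ needs the cancellation isomorphism $\M\simeq(\Gm\otimes_{\HI}\M)_{-1}$ for \emph{bare} homotopy sheaves, which is not available: the paper obtains cancellation only after MW-transfers are already present (Lemma \ref{LemHIMW}, via the Fasel--{\O}stv{\ae}r cancellation theorem), and for bare sheaves this assertion is essentially the $d=1$ case of the Bachmann--Yakerson conjecture, open integrally. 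Your second route fares no better: the Bass--Tate--Morel generators of \cite[Lemma 3.25.1]{Mor12} present $\KMW$ of a monogeneous extension over the base; a general contraction $\M$ is a $\KMW$-module not generated by symbols, so the "direct attack" does not apply to general $M$, while for $M=\KMW$ the conjecture is already known. Your final sentence concedes that the integral $2$-torsion is uncontrolled; as written the proposal proves neither the integral statement nor, without the plus/minus splitting and the injectivity argument above, the rational one.
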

\begin{proof}
This was claimed by Morel in \cite[Remark 4.31]{Mor12} and \cite[Remark 5.10]{Mor11} (see also \cite[Remark 4.3]{Bach20} for a similar conjecture).
\end{proof}
\begin{Pro}[Projection formulas] \label{ContractionProjectionFormulas} Let $\phi:E\to F=E(x_1,x_2,\dots, x_r)$ be a finite extension. Then
\begin{enumerate}

\item For $\ev{a}\in \GW(E)$ and $\mu \in M(F,\LLL_{F/k})$, one has 	$\tr_{x_1,\dots ,x_r/E}(\ev{\psi(a)}\cdot \mu)=\ev{a}\cdot \tr_{x_1,\dots ,x_r/E}(\mu)$.

\item For $\ev{a}\in \GW(F,\LLL_{F/k})$ and $\mu \in M(E)$, one has 	$\tr_{x_1,\dots ,x_r/E}(\ev{a}\cdot \res_{F/E}(\mu))=\tr_{x_1,\dots ,x_r/E}(\ev{a})\cdot \mu$.

\end{enumerate} 

\end{Pro}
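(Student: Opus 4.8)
The plan is to deduce both identities from their one-variable counterparts in Lemma \ref{R2bMoinsUn} by induction on the number $r$ of generators, the case $r=1$ being precisely that lemma. The structural device I would use is the factorization of the geometric transfer supplied by Definition \ref{DefGeometricTransfers}: setting $E'=E(x_1)$ and letting $\psi\colon E\to F$ denote the given extension, one has
\[
\tr_{x_1,\dots,x_r/E}=\tr_{x_1/E}\circ \tr_{x_2,\dots,x_r/E'},
\]
where $\tr_{x_2,\dots,x_r/E'}$ is the iterated transfer attached to the shorter tower $E'\subset E'(x_2)\subset\dots\subset F$. Thus at the inductive step I only ever combine the hypothesis for a length-$(r-1)$ tower with the single-step projection formula.

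For the first formula, I would fix $\ev{a}\in\GW(E)$ and $\mu\in\M(F,\LLL_{F/k})$ and write $\ev{\psi(a)}$ as the image under $\GW(E')\to\GW(F)$ of $\ev{\iota(a)}$, where $\iota\colon E\to E'$ is the first inclusion. The induction hypothesis for the tower above $E'$ gives
\[
\tr_{x_2,\dots,x_r/E'}(\ev{\psi(a)}\cdot\mu)=\ev{\iota(a)}\cdot\tr_{x_2,\dots,x_r/E'}(\mu),
\]
after which applying $\tr_{x_1/E}$ and invoking Lemma \ref{R2bMoinsUn}(1) for the simple extension $\iota$ (with scalar $\ev{a}$ and argument $\tr_{x_2,\dots,x_r/E'}(\mu)$) pulls $\ev{a}$ out of the outer transfer, producing $\ev{a}\cdot\tr_{x_1,\dots,x_r/E}(\mu)$.

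For the second formula I would fix $\ev{a}\in\GW(F,\LLL_{F/k})$ and $\mu\in\M(E)$ and use functoriality of restriction, $\res_{F/E}=\res_{F/E'}\circ\res_{E'/E}$, to write $\res_{F/E}(\mu)=\res_{F/E'}(\mu_1)$ with $\mu_1=\res_{E'/E}(\mu)$. The induction hypothesis above $E'$ then yields
\[
\tr_{x_2,\dots,x_r/E'}(\ev{a}\cdot\res_{F/E'}(\mu_1))=\tr_{x_2,\dots,x_r/E'}(\ev{a})\cdot\mu_1,
\]
where $\tr_{x_2,\dots,x_r/E'}(\ev{a})\in\GW(E',\LLL_{E'/k})$ is the iterated Bass--Tate transfer of the $\GW$-class (legitimate because $\GW\simeq(\KMW_1)_{-1}$ is itself a contracted homotopy sheaf). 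Setting $b=\tr_{x_2,\dots,x_r/E'}(\ev{a})$ and applying $\tr_{x_1/E}$, Lemma \ref{R2bMoinsUn}(2) for $\iota$ turns $b\cdot\res_{E'/E}(\mu)$ into $\tr_{x_1/E}(b)\cdot\mu$, and $\tr_{x_1/E}(b)=\tr_{x_1,\dots,x_r/E}(\ev{a})$ by the very definition of the iterated transfer.

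I expect no essential difficulty here: the argument is a bookkeeping induction. The points requiring attention are the line-bundle twists $\LLL_{\bullet/k}$, which must be carried consistently through each restriction and transfer, and, in the second formula, the fact that the class appearing on the right is the transfer of a $\GW$-valued element rather than of an element of $\M$ — both are handled automatically since the $\GW$-action, the transfers and the restriction maps are defined compatibly. It is worth stressing that nothing here uses Morel's Conjecture \ref{ConjectureMoinsUn}: all identities are asserted for a single fixed generating system $(x_1,\dots,x_r)$, so the iterated transfers are unambiguous.
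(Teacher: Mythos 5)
Your proof is correct and follows exactly the route the paper takes: the paper's own proof consists of the single sentence that the result is ``immediate by induction on $r$ according to Lemma \ref{R2bMoinsUn}'', and your write-up is precisely that induction carried out in detail, with the right factorization $\tr_{x_1,\dots,x_r/E}=\tr_{x_1/E}\circ\tr_{x_2,\dots,x_r/E(x_1)}$ and the correct bookkeeping of where each $\GW$-class lives.
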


\begin{proof} This is immediate by induction on $r$ according to Lemma \ref{R2bMoinsUn}.
\end{proof}

\begin{The} [Strong R1c] \label{R1c_fort} Let $M\in \HI(k)$ be a homotopy sheaf. Let $E$ be a field, $F/E$ a finite field extension and $L/E$ an arbitrary field extension. Write $F=E(x_1,\dots, x_r)$ with $x_i\in F$, $R=F\otimes_E L$ and $\psi_{\mathfrak{p}}:R\to R/\mathfrak{p}$ the natural projection defined for any $\mathfrak{p}\in \Spec(R)$. Then the diagram
\begin{center}
$\xymatrixcolsep{9pc}\xymatrix{
\M(F,\LLL_{F/k}) \ar[r]^{\tr_{x_1,\dots, x_r/E}} \ar[d]_{\oplus_{\mathfrak{p}} \res_{(R/\mathfrak{p})/F}}& 
\M(E,\LLL_{E/k})  \ar[d]^{\res_{L/E}} \\
\bigoplus_{\mathfrak{p}\in \Spec(R)}
\M(R/\mathfrak{p},\LLL_{(R/\mathfrak{p})/k}) 
\ar[r]_-{
\sum_{\mathfrak{p}}  (m_{p})_{\epsilon} \tr_{
\psi_{\mathfrak{p}}(a_1),
\dots,
\psi_{\mathfrak{p}}(a_r)/L}}
& \M(L,\LLL_{L/k})
}$
\end{center}
is commutative where $(m_{p})_{\epsilon}$ the quadratic form associated to the length of the localized ring $R_{(\mathfrak{p})}$ (see Notation \ref{Notation}.
\end{The}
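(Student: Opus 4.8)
The plan is to argue by induction on the number $r$ of generators, reducing everything to the weak base-change formula of Lemma \ref{R1c_faible} applied repeatedly along the tower $E\subset E(x_1)\subset\dots\subset F$. For the base case $r=1$ we have $F=E(x_1)$, so $x_1$ is a closed point of $\AAA^1_E$ with residue field $F$ and $R=F\otimes_E L\cong L[t]/(p_1)$ for $p_1$ the minimal polynomial of $x_1$ over $E$. The primes $\mathfrak{p}\in\Spec(R)$ then correspond bijectively to the closed points $y$ of $\AAA^1_L$ lying over $x_1$; under this dictionary $R/\mathfrak{p}=L(y)$ and the length $m_\mathfrak{p}$ of $R_{(\mathfrak{p})}$ is the ramification index $e_y$, so the desired square is exactly the content of Lemma \ref{R1c_faible} (with $L$ playing the role of the arbitrary extension there).

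For the inductive step, set $F'=E(x_1,\dots,x_{r-1})$ and $R'=F'\otimes_E L$, and use the factorisation $\tr_{x_1,\dots,x_r/E}=\tr_{x_1,\dots,x_{r-1}/E}\circ\tr_{x_r/F'}$ coming from Definition \ref{DefGeometricTransfers} (here $F=F'(x_r)$). First I would rewrite $\res_{L/E}\circ\tr_{x_1,\dots,x_{r-1}/E}$ using the inductive hypothesis for $F'/E$, obtaining $\sum_{\mathfrak{q}}(m_\mathfrak{q})_\epsilon\,\tr_{\psi_\mathfrak{q}(x_1),\dots,\psi_\mathfrak{q}(x_{r-1})/L}\circ\res_{(R'/\mathfrak{q})/F'}$ with $\mathfrak{q}$ ranging over $\Spec(R')$. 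Each residue field $R'/\mathfrak{q}=L(\psi_\mathfrak{q}(x_1),\dots,\psi_\mathfrak{q}(x_{r-1}))$ is an extension of $F'$, so I would then apply the weak Lemma \ref{R1c_faible} to the simple transfer $\tr_{x_r/F'}$ along $F'\to R'/\mathfrak{q}$; this expresses $\res_{(R'/\mathfrak{q})/F'}\circ\tr_{x_r/F'}$ as $\bigl(\sum_{\mathfrak{p}\mid\mathfrak{q}}e_{\mathfrak{p}\mid\mathfrak{q},\epsilon}\,\tr_{\psi_\mathfrak{p}(x_r)/(R'/\mathfrak{q})}\bigr)\circ\bigoplus_{\mathfrak{p}\mid\mathfrak{q}}\res_{(R/\mathfrak{p})/F}$, where $\mathfrak{p}$ runs over the primes of $R=R'[t]/(g)$ lying over $\mathfrak{q}$ (with $g$ the minimal polynomial of $x_r$ over $F'$).

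It then remains to assemble the two expressions. The scalar $e_{\mathfrak{p}\mid\mathfrak{q},\epsilon}$ is the restriction to $R'/\mathfrak{q}$ of a class $(e_{\mathfrak{p}\mid\mathfrak{q}})_\epsilon\in\GW(L)$ (since $\ev{-1}$ is defined over the base field), so the projection formula \ref{ContractionProjectionFormulas}(1) allows me to pull it through $\tr_{\psi_\mathfrak{q}(x_1),\dots,\psi_\mathfrak{q}(x_{r-1})/L}$; the two transfers then compose, by the very definition of the geometric transfer through the tower $L\subset R'/\mathfrak{q}\subset R/\mathfrak{p}$ (Definition \ref{DefGeometricTransfers}), to $\tr_{\psi_\mathfrak{p}(x_1),\dots,\psi_\mathfrak{p}(x_r)/L}$, using that $\psi_\mathfrak{q}(x_i)\mapsto\psi_\mathfrak{p}(x_i)$ under $R'/\mathfrak{q}\hookrightarrow R/\mathfrak{p}$. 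Since each prime of $R$ lies over a unique prime of $R'$, the double sum $\sum_\mathfrak{q}\sum_{\mathfrak{p}\mid\mathfrak{q}}$ collapses to $\sum_{\mathfrak{p}\in\Spec(R)}$, and the combined coefficient is $(m_\mathfrak{q})_\epsilon(e_{\mathfrak{p}\mid\mathfrak{q}})_\epsilon=(m_\mathfrak{q}\,e_{\mathfrak{p}\mid\mathfrak{q}})_\epsilon=(m_\mathfrak{p})_\epsilon$, by the multiplicativity $(ab)_\epsilon=(a)_\epsilon(b)_\epsilon$ in $\GW$ together with the multiplicativity of lengths $m_\mathfrak{p}=m_\mathfrak{q}\,e_{\mathfrak{p}\mid\mathfrak{q}}$ along the tower. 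This is exactly the asserted identity.

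I expect the main obstacle to be the commutative-algebra bookkeeping underlying the last step: matching the primes and residue fields of $R=R'[t]/(g)$ over those of $R'$ with the points \emph{$y$ over $x_r$} of the weak lemma, checking that the fibre length $e_{\mathfrak{p}\mid\mathfrak{q}}$ coincides with the ramification index appearing in Lemma \ref{R1c_faible}, and verifying the length identity $m_\mathfrak{p}=m_\mathfrak{q}\,e_{\mathfrak{p}\mid\mathfrak{q}}$ (equivalently, that each length equals the relevant ratio of inseparable degrees). The twists by the determinant line bundles $\LLL$ are carried along by canonical isomorphisms, and I would suppress them throughout.
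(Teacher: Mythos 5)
Your proposal is correct and follows essentially the same route as the paper: induction on $r$ with Lemma \ref{R1c_faible} as base case, splitting the tower at one generator, and combining the multiplicities via $(mn)_\epsilon=m_\epsilon n_\epsilon$ together with the multiplicativity of the lengths along the tower (which the paper also uses, equally implicitly). The only cosmetic difference is that you peel off the top generator $x_r$ (applying the inductive hypothesis to $\tr_{x_1,\dots,x_{r-1}/E}$ and the weak lemma to $\tr_{x_r/F'}$), whereas the paper peels off $x_1$ (weak lemma for $\tr_{x_1/E}$, inductive hypothesis over $E(x_1)$); both require the same projection-formula step to move the $\epsilon$-multiplicities past a transfer.
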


\begin{proof}
We prove the theorem by induction. For $r=1$, this is Lemma \ref{R1c_faible}. Write $E(x_1)\otimes_E L=\prod_j R_j$ for some Artin local $L$-algebras $R_j$, and decompose the finite dimensional $L$-algebra $F\otimes_{E(x_1)} R_j$ as $F\otimes_{E(x_1)} R_j=\prod_{i} R_{ij}$ for some local $L$-algebras $R_{ij}$. We have $F\otimes_E L\simeq \prod_{i,j}R_{ij}$. Denote by $L_j$ (resp. $L_{ij}$) the residue fields of the Artin local $L$-algebras $R_j$ (resp. $R_{ij}$), and $m_j$ (resp. $m_{ij}$) for their geometric multiplicity. one can conclude as the following diagram commutes

\begin{center}
\scalebox{0.88}{$\xymatrixcolsep{10.5pc}\xymatrix{
\M(F,\LLL_{F/k}) \ar[r]^{\tr_{x_1,\dots, x_r/E}} \ar[d]_{\oplus_{ij} \res_{L_{ij}/F}}
&
\M(E(x_1),\LLL_{E(x_1)/k}) 
\ar[d]^{\oplus \res_{L_j/E(x_1)}}
\ar[r]^{\tr_{x_1/E}}
& \M(E,\LLL_{E/k})  
\ar[d]^{\res_{L/E}} 
\\
\bigoplus_{ij}\M(L_{ij},\LLL_{L_{ij}/k}) 
\ar[r]_-{
\sum_{ij}  (m_{ij}m_j^{-1})_{\epsilon} \tr_{
\psi_{ij}(x_1),
\dots,
\psi_{ij}(x_r)/L_j}}
&
\bigoplus_j \M(L_j,\LLL_{L_j/k}) 
\ar[r]_{\sum_j (m_j)_{\epsilon}\tr_{\psi_j(x_1)/L}}
& \M(L,\LLL_{L/k})
}$
}

\end{center}
since both squares are commutative by the inductive hypothesis and the multiplicity formula $(mn)_{\epsilon}=m_{\epsilon}n_{\epsilon}$ for any natural numbers $m,n$.

\end{proof}

\begin{Pro} \label{Lem1.11}
Let $M\in \HI(k)$ be a homotopy sheaf. Let $E\to F$ a field extension. If the Hopf map $\eeta$ acts trivially on $\M$, then the restriction map $\res_{F/E}:\MQ(E)\to \MQ(F)$ is injective.

\end{Pro}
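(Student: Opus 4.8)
The plan is to reduce the general extension $F/E$ to a finite tower of simple extensions and to treat each simple step with results already established. Since $\res_{F/E}$ is functorial in the field extension, if I can factor it as a composite of injective restriction maps I am done, because a composite of injections is injective.

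First I would reduce to the case where $F$ is finitely generated over $E$. Because $\M$ is a homotopy sheaf it is unramified, hence continuous, and since $-\otimes_{\ZZ}\QQ$ commutes with filtered colimits I have $\MQ(F)=\colim_{F'}\MQ(F')$ where $F'$ ranges over the subextensions of $F/E$ that are finitely generated over $E$. An element of $\MQ(E)$ killed by $\res_{F/E}$ is therefore already killed by $\res_{F'/E}$ for some such $F'$, so it suffices to prove injectivity when $F/E$ is finitely generated. Such an extension can then be written as a finite tower $E=E_0\subset E_1\subset \dots \subset E_n=F$ with $E_{i+1}=E_i(x_{i+1})$ simple, and by functoriality $\res_{F/E}=\res_{E_n/E_{n-1}}\circ\dots\circ\res_{E_1/E_0}$; it is enough that each factor be injective.

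For each simple step there are two cases. If $x_{i+1}$ is algebraic over $E_i$, then $E_{i+1}/E_i$ is a finite simple extension and the injectivity of $\res_{E_{i+1}/E_i}$ is precisely the last assertion of Corollary \ref{Lem1.8}, which applies because $\eeta$ acts trivially on $\M$ (the point being that $\tr_{x/E_i}(1)$ then acts as multiplication by a nonzero integer, which becomes invertible after tensoring with $\QQ$). If instead $x_{i+1}$ is transcendental over $E_i$, then $E_{i+1}=E_i(t)$ is a rational function field, and I would invoke the split short exact homotopy sequence \cite[Theorem 5.38]{Mor12} applied to the homotopy sheaf $\M\in\HI(k)$ in place of $M$: its leftmost map shows that $\res:\M(E_i)\to\M(E_i(t))$ is injective already integrally, hence a fortiori after rationalization. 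Composing the injective maps along the tower yields the injectivity of $\res_{F/E}$.

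The steps are all short, so I do not expect a serious obstacle; the only genuine subtlety I would want to state carefully is the reduction to finitely generated extensions, which rests on the continuity of $\M$ together with the exactness of rationalization. The remaining bookkeeping is simply to observe that the $\eeta$-triviality hypothesis is exactly what is needed in the algebraic steps (through Corollary \ref{Lem1.8}), whereas the transcendental steps require no hypothesis and are handled instead by homotopy invariance, so that the two ingredients together cover every simple extension occurring in the tower.
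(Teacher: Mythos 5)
Your proposal is correct and follows essentially the same route as the paper: reduce to a finitely generated extension, induct on a tower of simple extensions, handle the algebraic steps via Corollary \ref{Lem1.8} (the kernel of restriction is killed by $\tr_{x/E}(1)$, which acts by a nonzero integer when $\eeta$ acts trivially, hence invertibly after rationalization), and handle the transcendental steps by the splitting of the homotopy exact sequence, which makes $\M(E_i)$ a direct summand of $\M(E_i(t))$. The only difference is that you spell out the continuity argument for the reduction to finitely generated extensions, which the paper takes for granted.
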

\begin{proof} We may assume that $F/E$ is finitely generated. By induction on the number of generators of $F$ over $E$, we may assume that $F=E(x)$ is generated by a single element $x\in F$. If $x$ is algebraic over $E$, we know from Corollary \ref{Lem1.8} that $\res_{F/E}$ is killed by $\Tr_{x/E}(1)$. Since the Hopf map acts trivially, the action of $\Tr_{x/E}(1)$ is given by the multiplication by $[F:E]$ and we obtain the result. If $x$ is transcendent over $E$, then we know that $\M(E)$ is a direct summand in $\M(F)$.
\end{proof}

\begin{Cor} \label{TransfersRatioPartiePlus}
Let $M\in \HI(k)$ be a homotopy sheaf. Let $F/E$ be a finite field extension, and let $x_1,\dots,x_r$ and $y_1,\dots, y_s$ be two generating system for $F/E$. If the Hopf map $\eeta$ acts trivially on $\M$, then 
\begin{center}
$\tr_{x_1,\dots,x_r/E}=\tr_{y_1,\dots,y_s/E}$
\end{center}
seen as morphism from $\MQ(F)$ to $\MQ(E)$.
\end{Cor}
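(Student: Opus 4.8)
The plan is to deduce this special case of Morel's conjecture from the base change formula of Theorem \ref{R1c_fort} together with the rational injectivity of restriction. Since the Hopf map $\eeta$ acts trivially on $\M$, Proposition \ref{Lem1.11} shows that for every field extension $L/E$ the map $\res_{L/E}\colon \MQ(E)\to \MQ(L)$ is injective. Consequently it is enough to prove that the two composite transfers become equal after postcomposition with $\res_{L/E}$ for a single well-chosen $L$.

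First I would take $L$ to be a finite normal extension of $E$ containing $F$ (a normal closure of $F/E$), so that $L\in\FFF_k$ and no passage to an infinite extension is needed. Put $R=F\otimes_E L$. Applying Theorem \ref{R1c_fort} to each of the two generating systems expresses $\res_{L/E}\circ\tr_{x_1,\dots,x_r/E}$ and $\res_{L/E}\circ\tr_{y_1,\dots,y_s/E}$ as sums over $\mathfrak p\in\Spec R$ of terms $(m_{\mathfrak p})_\epsilon\,\tr_{\psi_{\mathfrak p}(x_1),\dots/L}\circ\res_{(R/\mathfrak p)/F}$, respectively with the $y_j$ in place of the $x_i$.

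The key observation is that, because $L/E$ is normal and contains $F$, every $E$-embedding of $F$ into an algebraic closure already lands in $L$; hence each residue field $R/\mathfrak p$ equals $L$. Then every generator $\psi_{\mathfrak p}(x_i)$ lies in $L$, the associated chain of subfields is constant, and each iterated Bass-Tate transfer $\tr_{\psi_{\mathfrak p}(x_1),\dots,\psi_{\mathfrak p}(x_r)/L}$ is the identity of $\M(L,\LLL_{L/k})$. The same applies to the $y_j$. Therefore both expressions collapse to the single sum $\sum_{\mathfrak p\in\Spec R}(m_{\mathfrak p})_\epsilon\,\res_{(R/\mathfrak p)/F}$, which depends only on the ring $R=F\otimes_E L$, its primes, their multiplicities and the restriction maps --- and in particular not on the chosen generating system. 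Hence $\res_{L/E}\circ\tr_{x_1,\dots,x_r/E}=\res_{L/E}\circ\tr_{y_1,\dots,y_s/E}$, and injectivity of $\res_{L/E}$ gives the claim.

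The main obstacle is the geometric input that all residue fields $R/\mathfrak p$ coincide with $L$ and that the corresponding transfers trivialize; once this is in place, the argument is purely formal. A secondary point to verify is that the collapsed sum is genuinely generator-independent, which is clear since it is written entirely in terms of $R$ and the restriction maps, with no reference to the $x_i$ or $y_j$.
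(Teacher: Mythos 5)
Your proof is correct and follows essentially the same route as the paper: apply Theorem \ref{R1c_fort} to both generating systems over a large extension $L$ of $E$, observe that all residue fields collapse to $L$ so the base-changed transfers trivialize and both sums become the generator-independent expression $\sum_{\mathfrak p}(m_{\mathfrak p})_\epsilon\res_{(R/\mathfrak p)/F}$, then conclude by the injectivity of $\res_{L/E}$ on $\MQ$ from Proposition \ref{Lem1.11}. The only (harmless, arguably cleaner) difference is that you take $L$ to be a finite normal closure of $F/E$, staying inside $\FFF_k$, whereas the paper takes $L=\overline{E}$ and implicitly relies on continuity of $M$ to handle the infinite extension.
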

\begin{proof}
We apply Theorem \ref{R1c_fort} two times with $F=E(x_1,\dots,x_r)$ and $F=E(y_1,\dots, y_s)$, and with $L=\overline{E}$ an algebraic closure of $E$. Hence $\res_{L/E}\circ\tr_{x_1,\dots,x_r/E}=\res_{L/E}\circ\tr_{y_1,\dots,y_s/E}$ and we end with Proposition \ref{Lem1.11}.
\end{proof}

\begin{The}
Assume that $2$ is invertible. Let $M\in \HI(k)$ be a homotopy sheaf. The sheaf $\MQ$ has functorial Bass-Tate transfer maps in the sense of Corollary \ref{TransfersRatioPartiePlus}.

\end{The}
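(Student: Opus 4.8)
The plan is to split $\MQ$ into the part where the Hopf map vanishes and the part where it is invertible, and to treat these by Corollary \ref{TransfersRatioPartiePlus} and by Morel's unconditional functoriality for twofold contractions respectively. Since $2$ is invertible, the element $h=\ev{1}+\ev{-1}\in\GW(k)$ satisfies $h^2=2h$, so $e:=\tfrac12 h$ is an idempotent in $\GW(k)_{\QQ}$; as $-1$ is a global unit, $e$ acts on the $\GW$-module $\MQ$ (see \ref{GWstructureContracted}) and yields a decomposition $\MQ=\MQ^{+}\oplus\MQ^{-}$, with $\MQ^{+}=e\cdot\MQ$ and $\MQ^{-}=(1-e)\cdot\MQ$, into homotopy sheaves. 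The fundamental relation $\eeta\, h=0$ gives $\eeta\cdot e=0$, so that $\eeta$ acts trivially on $\MQ^{+}$, whereas on $\MQ^{-}$ one has $h=0$ and $\eeta$ acts invertibly. Because the Bass-Tate transfers and the projection formulas of Lemma \ref{R2bMoinsUn} are $\GW$-linear, each $\tr_{x_1,\dots,x_r/E}$ commutes with $e$ and hence preserves this decomposition, simultaneously for all generating systems.

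On the summand $\MQ^{+}$, where $\eeta$ acts trivially, the argument of Corollary \ref{TransfersRatioPartiePlus} applies verbatim: by Proposition \ref{Lem1.11} the restriction to an algebraic closure is injective, and Theorem \ref{R1c_fort} applied with $L=\overline{E}$ then forces $\tr_{x_1,\dots,x_r/E}=\tr_{y_1,\dots,y_s/E}$ on $\MQ^{+}$. On the summand $\MQ^{-}$, where $\eeta$ is invertible, the $\Gm$-suspension $\Gm\otimes_{\HI}-$ becomes an equivalence with quasi-inverse the contraction $\Hominter(\Gm,-)$; consequently $N:=\Gm\otimes_{\HI}\Gm\otimes_{\HI}\MQ^{-}$ is a homotopy sheaf whose double contraction $N_{-2}$ is canonically isomorphic to $\MQ^{-}$. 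Morel's theorem that the Bass-Tate transfers of a twofold contraction are independent of the chosen generators (\cite{Mor12}, recalled in the introduction) then yields the desired functoriality on $\MQ^{-}$.

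Finally, since the transfers respect the splitting $\MQ=\MQ^{+}\oplus\MQ^{-}$, the equality $\tr_{x_1,\dots,x_r/E}=\tr_{y_1,\dots,y_s/E}$ on each summand gives the same equality on all of $\MQ$, which is precisely the asserted functoriality in the sense of Corollary \ref{TransfersRatioPartiePlus}. The main obstacle lies in the minus part: one must justify that inverting $\eeta$ renders the $\Gm$-suspension invertible on rational homotopy sheaves (so that $\MQ^{-}$ really is a twofold contraction of a homotopy sheaf), and, more delicately, verify that the Bass-Tate transfers of Definition \ref{BassTateTransfersDefinition} attached to $\MQ^{-}$ coincide with those coming from its presentation as $N_{-2}$; only then does Morel's twofold-contraction result apply to the transfers we actually need. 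The plus part, by contrast, is a direct consequence of Corollary \ref{TransfersRatioPartiePlus}.
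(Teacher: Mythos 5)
Your proof follows essentially the same route as the paper: split $\MQ=\MQ^{+}\oplus\MQ^{-}$ via the idempotent $\tfrac12 h$, handle $\MQ^{+}$ (where $\eeta$ acts trivially) by Corollary \ref{TransfersRatioPartiePlus}, and handle $\MQ^{-}$ (where $\eeta$ is invertible) by identifying it with a twofold contraction and invoking Morel's unconditional functoriality from \cite[Chapter 4]{Mor12}. The paper's own proof is in fact terser than yours and silently passes over the two points you flag at the end (that inverting $\eeta$ makes $\Gm$-delooping an equivalence on the minus part, and that the Bass--Tate transfers are compatible with that identification), so your write-up is, if anything, more careful.
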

\begin{proof}
The sheaf $\MQ$ splits into two sheaves $\MQ^+$ and $\MQ^-$. On one hand, the Hopf map $\eeta$ acts trivially on $\MQ^+$ hence there is a structure of functorial transfers thanks to Corollary \ref{TransfersRatioPartiePlus}. On the other hand, the Hopf map $\eeta$ is invertible on the minus part, leading to an isomorphism between $\MQ$ and $(\MQ)_{-1}$. The latter has a structure of functorial transfers according to \cite[Chapter 4]{Mor12}. Hence the result. 
\end{proof}

We summarize the previous results in the following theorem.
\begin{The} \label{HIMoinsUntogtr}
Let $M\in \HI(k)$ be a homotopy sheaf. Then:
\begin{enumerate}
\item Assume that $2$ is invertible. The rational contracted homotopy sheaf $M_{-1,\QQ}$ is a homotopy sheaf with generalized transfers.
\item Assuming Conjecture \ref{ConjectureMoinsUn}, the contracted homotopy sheaf $\M$ is a homotopy sheaf with generalized transfers.
\end{enumerate}
\end{The}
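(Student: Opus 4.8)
The statement is a compilation of what precedes it, so the plan is simply to match each item of Definition \ref{DefGeneralizedTransfers} against a result already proved in this subsection, treating the two cases in parallel. In both cases $\M$ (resp.\ $\MQ$) is a homotopy sheaf carrying the canonical $\GW$-module structure of \ref{GWstructureContracted}, so the only genuine work is to exhibit a well-defined transfer datum \ref{itm:eD2} and then to read off the axioms \ref{itm:eR1b}, \ref{itm:eR1c}, \ref{itm:eR2b}, \ref{itm:eR2c} and \ref{itm:eR3b}.

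First I would construct the transfer map. Given a finite extension $\phi:E\to F$, choose generators $F=E(x_1,\dots,x_r)$ and set $\Tr_{F/E}=\tr_{x_1,\dots,x_r/E}$ in the sense of Definition \ref{DefGeometricTransfers}. The point requiring justification is that this is independent of the chosen generating system. In case (2) this independence is precisely Conjecture \ref{ConjectureMoinsUn}, assumed in the hypothesis. In case (1), since $2$ is invertible we decompose $\MQ=\MQ^+\oplus\MQ^-$; on $\MQ^+$ the Hopf map $\eeta$ acts trivially and Corollary \ref{TransfersRatioPartiePlus} gives independence, while on $\MQ^-$ the Hopf map is invertible, which reduces the question to the two-fold contraction setting of \cite[Chapter 4]{Mor12} where functoriality is known. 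This is exactly the content of the theorem preceding the statement, and it furnishes a single map $\Tr_{F/E}$ satisfying \ref{itm:eD2}.

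With $\Tr_{F/E}$ in hand, the remaining axioms follow formally. For \ref{itm:eR1b}, the equality $\Tr_{\Id_E}=\Id$ is immediate from Definition \ref{DefGeometricTransfers} with an empty set of generators, and concatenating generating systems gives $\tr_{x_1,\dots,x_r,y_1,\dots,y_s/E}=\tr_{x_1,\dots,x_r/E}\circ\tr_{y_1,\dots,y_s/F}$, which combined with the independence just established is the functoriality $\Tr_{\psi\circ\phi}=\Tr_\phi\circ\Tr_\psi$. The base-change formula \ref{itm:eR1c} is Theorem \ref{R1c_fort}; the two projection formulas \ref{itm:eR2b} and \ref{itm:eR2c} are the two parts of Proposition \ref{ContractionProjectionFormulas}; and the residue compatibility \ref{itm:eR3b} is obtained from Lemma \ref{R3a}, summing the commutative squares over the extensions $w$ of the valuation $v$.

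The only real obstacle is the well-definedness step for \ref{itm:eD2}: proving that the iterated Bass--Tate transfers do not depend on the generators is exactly Morel's conjecture and is the reason case (2) must assume it outright. Case (1) sidesteps this difficulty rationally through the $\eeta$-splitting --- the multiplicativity of the degree forcing the ambiguity to vanish on the plus part, and the minus part being already covered by the two-fold contraction case. Every other axiom is available off the shelf, so once the transfer is well-defined the verification is mechanical.
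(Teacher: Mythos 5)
Your overall strategy is the paper's: build $\Tr_{F/E}$ as an iterated Bass--Tate transfer $\tr_{x_1,\dots,x_r/E}$, get independence of the generating system from Conjecture \ref{ConjectureMoinsUn} in case (2) and from the splitting $\MQ=\MQ^+\oplus\MQ^-$ (Corollary \ref{TransfersRatioPartiePlus} on the plus part, invertibility of $\eeta$ and the two-fold contraction on the minus part) in case (1), and then read off \ref{itm:eR1b} from concatenation, \ref{itm:eR1c} from Theorem \ref{R1c_fort}, and \ref{itm:eR2b}, \ref{itm:eR2c} from Proposition \ref{ContractionProjectionFormulas}. All of that matches the paper.

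There is, however, a genuine gap in your treatment of \ref{itm:eR3b}. You claim it is ``obtained from Lemma \ref{R3a}, summing the commutative squares over the extensions $w$ of $v$,'' but Lemma \ref{R3a} is a statement about the \emph{restriction} map $\phi_*=\res_{F/E}$: it says $\partial_w\circ\res_{F/E}=e_\epsilon\,\res_{\kappa(w)/\kappa(v)}\circ\partial_v$, with the arrows going from $E$ to $F$. Axiom \ref{itm:eR3b} is the dual statement about the \emph{transfer}: $\partial_v\circ\Tr_\phi=\sum_w\Tr_{\phi_w}\circ\partial_w$, with the arrows going from $F$ back down to $E$. No summation of the squares of Lemma \ref{R3a} produces this; the two statements are logically independent (Lemma \ref{R3a} is the ingredient for the base-change Lemma \ref{R1c_faible}, not for residue compatibility of transfers). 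Proving \ref{itm:eR3b} for the Bass--Tate transfers requires a separate argument --- essentially the compatibility of the split exact sequence defining the coresidues $\rho_x$ over $E(t)$ with specialization along $v$, which is a two-dimensional local computation. The paper imports this as \cite[Theorem 5.19]{Mor12}; your proof needs that input (or an equivalent argument), not Lemma \ref{R3a}.
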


\begin{proof}
The $\GW$-action is defined in \ref{GWstructureContracted}, functoriality of transfers \ref{itm:eR1b} follows from Corollary \ref{TransfersRatioPartiePlus} or Conjecture \ref{ConjectureMoinsUn}, the base change \ref{itm:eR1c} is Theorem \ref{R1c_fort}, the projection formulas \ref{itm:eR2} are proved in Proposition \ref{ContractionProjectionFormulas} and the compatibility axiom \ref{itm:eR3b} can be deduced from \cite[Theorem 5.19]{Mor12}.

\end{proof}

\subsection{Unicity of transfers} \label{SubsectionUnicityTransfers}

The goal of this subsection is to prove Theorem \ref{UnicityTransfers} which asserts that the structure of Bass-Tate transfer maps on a contracted homotopy sheaf $\M$ is, in some sense, unique.

\begin{Lem}[eR3c and eR3d] \label{eR3cdContraction} Let $M\in \HI(k)$ be a homotopy sheaf. Let $\iota:E\to F$ be an extension of fields and $w$ a valuation on $F$ that restricts to the trivial valuation on $E$. Then the composition
\begin{center}
$\xymatrix{
M(E)
\ar[r]^{\iota^*}
&
M(F)
\ar[r]^-{\partial_w}
&
M_{-1}(\kappa(w),\LLL_w)
}$
\end{center}
is zero.
Moreover, let $\bar{\iota}:E\to \kappa(w)$ be the induced map. For any prime $\pi$ of $w$, the following diagram
\begin{center}
$\xymatrix{
M_{-1}(E)
\ar[r]^{\iota^*}
\ar[d]^{\bar{\iota}^*}
&
M_{-1}(F)
\ar[d]^{[\pi]}
\\
\M(\kappa(w))
&
M(F)
\ar[l]^-{\Theta\circ \partial_w}
}$
\end{center}
is commutative (where $\Theta$ is the canonical isomorphism induced by the trivialization of $\LLL_{w}$ through the choice of $\pi$).
\end{Lem}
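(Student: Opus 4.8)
The plan is to deduce both statements from the geometry of a single well-chosen model together with the localization sequence of Subsection \ref{HILongExactSequence} and its compatibilities recorded in Proposition \ref{LocalizationContractedHI} and Proposition \ref{eR3eHI}. First I would pick a smooth model realizing the valuation $w$: since $w$ is discrete of geometric type and restricts trivially to $E$, I can find an irreducible smooth $k$-scheme $X$ with function field $F$, a point $y\in X^{(1)}$ with $\OO_{X,y}=\OO_w$ and $\kappa(y)=\kappa(w)$, such that the inclusion $\iota:E\to F$ is induced by a smooth morphism $X\to \Spec E$ (equivalently, $X$ is a smooth $E$-scheme). The hypothesis that $w$ is trivial on $E$ means precisely that $\Spec E\to \Spec k$ factors through the generic point and that the closed subscheme $Z=\overline{\{y\}}$ dominates $\Spec E$; this is the geometric input that makes the first assertion essentially formal.

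For the vanishing of $\partial_w\circ\iota^*$, I would argue as follows. The composite $M(E)\xrightarrow{\iota^*}M(F)\xrightarrow{\partial_w}\M(\kappa(w),\LLL_w)$ is, by \ref{ResidueMapsMoinsUn}, the boundary $M(F)\to H^1_y(X,M)$ precomposed with $\iota^*$. But $\iota^*$ factors through $M(X)$ (the section of $M$ over the whole model $X$, since $X$ is smooth over $E$ and $\iota^*$ comes from the structure morphism $X\to\Spec E$), and by the cohomology with support exact sequence of \ref{HILongExactSequence} the image of $M(X)\to M(F)=M(X\setminus Z)$, followed by $\partial$, is zero because $\partial\circ j^*=0$. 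Concretely, any element in the image of $M(E)\to M(F)$ extends to $\OO_{X,y}=\OO_w$, i.e. lies in $M(\OO_w)\subset M(F)$, and residues of elements of $M(\OO_w)$ vanish by the very definition $H^1_w(\OO_w,M)=M(F)/M(\OO_w)$. This is the cleanest route and avoids choosing a uniformizer.

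For the commutative square, the content is the interaction between the residue map, the multiplication $[\pi]\cdot(-)$ by the class of a uniformizer, and the specialization induced by $\bar\iota:E\to\kappa(w)$. I would invoke Proposition \ref{eR3eHI} with $\alpha=[\pi]\in\kMW_1$: it gives compatibility between the $\gamma_\alpha$-multiplication on $\M$ and on $M$ through the localization sequences, so that $\Theta\circ\partial_w\circ([\pi]\cdot(-))$ computes the specialization map $s_w$ on $M_{-1}$ trivialized by $\pi$. The remaining point is that $s_w\circ\iota^*=\bar\iota^*$ on $\M$, which is exactly axiom \ref{itm:uA3(ii)} (the specialization of a class pulled back from a subfield on which $w$ is trivial is the restriction along the induced residue extension $\bar\iota$), applied to the contracted sheaf $\M$ rather than to $M$ itself.

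The main obstacle I anticipate is bookkeeping the twist $\LLL_w$ and the trivialization $\Theta$ coherently: the residue map lands in the $\LLL_w$-twisted group, and one must check that multiplying by $[\pi]$ before taking $\partial_w$ produces exactly the factor that cancels $\Theta$, rather than an extra unit $\ev{u}$ or a sign. This is a standard but delicate verification; I would settle it by reducing, via \ref{EquivalenceHomotopySheaves}, to the case $M=\KMW$ where the formula $\partial_w([\pi]\cdot x)=\overline{x}$ for $x\in\OO_w^\times$ is explicit, and then extend by $\GW$-linearity and naturality (Proposition \ref{eR3eHI}) to an arbitrary homotopy sheaf, exactly as the projection formulas \ref{itm:eR2} were handled above.
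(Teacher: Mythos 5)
Your argument for the first assertion is essentially the paper's: the paper simply says it is "deduced from the long exact sequence \ref{HILongExactSequence}", and your observation that $\iota^*$ lands in $M(\OO_w)\subset M(F)$ (so that the residue vanishes because $\partial\circ j^*=0$, equivalently because $H^1_w(\OO_w,M)=M(F)/M(\OO_w)$) is exactly that deduction spelled out. For the commutative square the two routes genuinely diverge: the paper does no work at all and cites \cite[Corollary 5.23]{BachmannYakerson18}, whereas you give a self-contained argument via the identity $\Theta\circ\partial_w\circ([\pi]\cdot(-))=s_w$ on $M_{-1}(\OO_w)$ combined with axiom \ref{itm:uA3(ii)} for the contracted sheaf. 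That decomposition ($s_w\circ\iota^*=\bar\iota^*$ plus the uniformizer formula for the trivialized residue) is the correct content and buys a proof internal to Morel's unramified-sheaf formalism, at the cost of having to verify the uniformizer formula yourself. The one step I would push back on is your proposed verification of that formula by "reducing to $M=\KMW$ via \ref{EquivalenceHomotopySheaves}": the equivalence with unramified $\FFF_k$-data does not let you transport an identity proved for $\KMW$ to an arbitrary homotopy sheaf, since $M$ is not a priori built from $\KMW$. What is actually needed is the compatibility of the pairing $\kMW_1\times M_{-1}\to M$ of \ref{GWstructureContracted} with the localization sequence — Proposition \ref{eR3eHI} applied to $\alpha=[\pi]$ gives you most of this, and the residual identification of $\Theta\circ\partial_w\circ\gamma_{[\pi]}$ restricted to $M_{-1}(\OO_w)$ with $s_w$ is precisely the content of the Bachmann--Yakerson corollary the paper cites. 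So your plan is sound, but that last verification should be stated as the needed lemma rather than dispatched by a reduction to Milnor--Witt K-theory.
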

\begin{proof}
The first identity is deduced from the long exact sequence \ref{HILongExactSequence}. The commutative square follows from \cite[Corollary 5.23]{BachmannYakerson18}.
\end{proof}

\begin{Lem}[eR3e] \label{eR3eContraction}
Let $E$ be a field over $k$ with a valuation $v$ and $u$ a unit of $v$. Then
\begin{center}
$\partial_v\circ [u]=\epsilon [u] \circ \partial_v$
\end{center}
where $\epsilon$ is $-\ev{-1}$ as usual.
\end{Lem}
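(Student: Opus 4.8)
The plan is to deduce the relation from Proposition \ref{eR3eHI}, which already records that multiplication by a symbol commutes with the connecting maps of the localization sequence, and then to account for the extra sign $\epsilon$ through the graded-commutativity of Milnor–Witt K-theory. The guiding principle is that commutation with localization is sign-free, so the entire content of $\epsilon$ must be hidden in the purity isomorphism.

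First I would fix a uniformizer $\pi$ of $v$, use it to trivialize $\LLL_v$, and realize $\partial_v$ geometrically by choosing a smooth model $i:Z\hookrightarrow X$ of the closed immersion $\Spec\kappa(v)\to\Spec\OO_v$, with open complement $U$, so that $\partial_v$ is the composite of the connecting map $M(U)\to H^1_Z(X,M)$ of \ref{HILongExactSequence} with the purity identification $H^1_Z(X,M)\simeq \M(\kappa(v),\LLL_v)$ recalled in \ref{ResidueMapsMoinsUn}. Since $u$ is a unit of $v$, the symbol $[u]$ lifts to a class in $\kMW_1(X)$, so Proposition \ref{eR3eHI} applies and shows that $\gamma_{[u]}$ commutes with these connecting maps \emph{before} applying purity, i.e. $\gamma_{[u]}\circ\partial^{\mathrm{loc}}_{Z,X}=\partial^{\mathrm{loc}}_{Z,X}\circ\gamma_{[u]}$ at the level of cohomology with support.

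It then remains to compare the two $\kMW_1$-actions through the purity isomorphism. Concretely, I would reduce the lemma to the statement that the identification $H^1_Z(X,M)\simeq \M(\kappa(v),\LLL_v)$ is $\kMW_1$-equivariant only up to the sign $\epsilon=-\ev{-1}$: multiplication by $[u]$ on $H^1_Z(X,M)$ corresponds to multiplication by $\epsilon\,[\bar u]$ on the twisted contraction, the sign being exactly the one governing the passage of the degree-one symbol $[u]$ past the class encoding the codimension-one support (equivalently, past the suspension coordinate used to define both the contraction and the purity isomorphism).

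Finally, I would verify this sign in the universal case $M=\kMW$, where it is the classical graded-commutativity relation $[u]\cdot[\pi]=\epsilon\,[\pi]\cdot[u]$ combined with the description of $\partial_v$ as extraction of the $[\pi]$-coefficient afforded by the canonically split homotopy exact sequence; the general case follows since every contraction $\M$ is a $\kMW$-module (see \ref{GWstructureContracted}) and all operations in sight are $\kMW$-linear and natural in $M$, exactly as for Lemma \ref{R3a} and Lemma \ref{eR3cdContraction}. The one genuine obstacle is precisely this sign bookkeeping: getting the $\kMW$-equivariance of the purity/Thom isomorphism right, together with the twist by $\LLL_v$, is the crux, everything else being formal from Proposition \ref{eR3eHI}.
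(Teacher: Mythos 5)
Your first two paragraphs carry out a correct and genuinely useful reduction: since $\partial_v$ is by definition the connecting map of the localization sequence followed by the purity identification of \ref{ResidueMapsMoinsUn}, and since Proposition \ref{eR3eHI} gives sign-free commutation of $\gamma_{[u]}$ with the connecting map (after shrinking the model $X$ so that $u$ extends to a unit on it), the whole lemma does reduce to showing that the purity isomorphism $H^1_Z(X,N)\simeq N_{-1}(\kappa(v),\LLL_v)$ intertwines $\gamma_{[u]}$ with $\epsilon\,\gamma_{[\bar u]}$. Note for comparison that the paper does not argue this way at all: it simply cites Morel's Lemma 5.10, so any honest argument here is already more than the paper provides.

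The gap is in your last step. Verifying the sign for $M=\kMW$ and then invoking ``$\kMW$-linearity and naturality in $M$'' does not yield the general case. First, the residue map is precisely \emph{not} $[u]$-linear --- that is the content of the lemma --- so ``all operations in sight are $\kMW$-linear'' is either false or circular. Second, naturality of purity in $M$ only transports the statement along morphisms of homotopy sheaves, and a general $M\in\HI(k)$ admits no map to or from $\kMW$ (nor a resolution by free $\kMW$-modules in $\HI(k)$) through which the sign computed for $\kMW$ would propagate; your appeal to Lemmas \ref{R3a} and \ref{eR3cdContraction} does not help, since neither is proved by such a bootstrapping. What is actually needed is the statement, for an \emph{arbitrary} homotopy sheaf $M$, that the contraction of the multiplication map $\gamma_{[u]}\colon\M\to M$ is $\epsilon\,\gamma_{[\bar u]}\colon M_{-2}\to\M$ --- equivalently, that the switch on $\Gm\wedge\Gm$ acts on $M_{-2}$ by $\epsilon$. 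This is exactly Morel's computation behind \cite[Lem.~5.10]{Mor12}, and it must be established for all $M$ simultaneously (via the explicit geometric model of the contraction and of the Thom/purity isomorphism), not deduced from the universal example. With that input your reduction closes; without it, the crux you yourself identify as ``the one genuine obstacle'' remains unproved.
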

\begin{proof}
See \cite[Lem 5.10]{Mor12}.

\end{proof}

\begin{The} \label{UnicityTransfers}
Let $M\in \HIgtr(k)$ be a homotopy sheaf with generalized transfers. We have for any simple extension $\psi: F\to F(x)$:
\begin{description}
\item[(a)] A generalized transfer map $\Tr_{\psi}:\M(F(x),\LLL_{F(x)/k})\to \M(F,\LLL_{F/k})$ induced by the structure of a homotopy sheaf with generalized transfers on $M$ (see also Remark \ref{GeneralizedTransfersOnContraction}).
\item[(b)] A Bass-Tate transfer map $\tr_{x/F}:\M(F(x),\LLL_{F(x)/k})\to \M(F,\LLL_{F/k})$ defined in \ref{BassTateTransfersDefinition}.
\end{description}
Then the two transfer maps coincide : $\Tr_{\psi}=\tr_{x/F}$.
\end{The}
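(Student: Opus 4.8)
The plan is to compare the two transfers after base-changing the simple extension $\psi:F\to F(x)$ along itself, and to exploit the residue compatibility \ref{itm:eR3b} of the generalized transfers of $M$ over the projective line. The key observation is that the transfer $\Tr_\psi$ of part (a) is exactly the $\kappa(w_\infty)$-component of the transfer appearing in \ref{itm:eR3b} for the place at infinity, so the whole task is to compute that residue compatibility on a single well-chosen element and match it against Definition \ref{BassTateTransfersDefinition}.

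First I would introduce the finite extension $\Psi:F(t)\to F(x)(t)$ obtained from $\psi$ by the base change $-\otimes_F F(t)$; it has the same degree as $\psi$ and is the generic fibre of the finite morphism $\AAA^1_{F(x)}\to\AAA^1_F$. Inside $\AAA^1_{F(x)}$ sits the canonical rational ``diagonal'' point $\Delta$ determined by $t\mapsto x\in F(x)$: it lies over the closed point $x\in(\AAA^1_F)^{(1)}$, its residue field is $F(x)$, and the induced map on residue fields $\Psi_\Delta\colon\kappa(v_x)=F(x)\to\kappa(\Delta)=F(x)$ is the identity. Moreover the valuation $v_\infty$ at infinity on $F(t)$ has a single extension $w_\infty$ to $F(x)(t)$, which is unramified and whose residue extension is $\psi$ (the fibre of $\infty$ is $\Spec(F(x))$).

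The heart of the argument tracks one element. Fix $\mu\in\M(F(x),\LLL_{F(x)/k})$, let $\rho_\Delta$ be the coresidue at $\Delta$ on $\AAA^1_{F(x)}$, and set $\theta:=\Tr^{M}_\Psi(\rho_\Delta(\mu))\in M(F(t))$, where $\Tr^{M}_\Psi$ is the generalized transfer of $M$ itself. Applying \ref{itm:eR3b} to $\Psi$ at each finite valuation $v_y$ of $F(t)$, and using that $\rho_\Delta(\mu)$ has residue $\mu$ at $\Delta$ and $0$ at every other closed point, I obtain $\partial_{v_y}(\theta)=0$ for $y\neq x$ and $\partial_{v_x}(\theta)=\Tr_{\Psi_\Delta}(\mu)=\mu$, the last equality using $\Psi_\Delta=\id$ together with $\Tr_{\id}=\id$ from \ref{itm:eR1b}. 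Thus $\theta$ and $\rho_x(\mu)$ have the same residues at all closed points of $\AAA^1_F$, so by exactness of the split homotopy sequence their difference lies in $\res_{F(t)/F}(M(F))$. Now apply $\partial_\infty$ at $v_\infty$: on one side, $\partial_\infty$ kills $\res_{F(t)/F}(M(F))$ by \ref{eR3cdContraction} (as $v_\infty$ is trivial on $F$), whence $\partial_\infty(\theta)=\partial_\infty(\rho_x(\mu))=-\tr_{x/F}(\mu)$ by Definition \ref{BassTateTransfersDefinition}; on the other side, \ref{itm:eR3b} applied to $\Psi$ at $v_\infty$ gives $\partial_\infty(\theta)=\Tr_\psi\big(\partial_{w_\infty}(\rho_\Delta(\mu))\big)=\Tr_\psi\big(-\tr_{\Delta/F(x)}(\mu)\big)=-\Tr_\psi(\mu)$, where $\Tr_\psi$ is precisely the part (a) transfer on $\M$ (Example \ref{GeneralizedTransfersOnContraction}) and where I use that the Bass--Tate transfer $\tr_{\Delta/F(x)}$ along the rational point $\Delta$ is the identity. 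Comparing the two expressions yields $\Tr_\psi(\mu)=\tr_{x/F}(\mu)$.

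The main obstacle I anticipate is bookkeeping rather than conceptual. One must check that the line-bundle twists used implicitly above are the canonical ones and are mutually compatible: the comparison $\LLL_x\simeq\LLL_{F(x)/k}$, the two twists at infinity over $F$ and $F(x)$, and the base change of twists along $\Psi$ must all be tracked so that the signs and the identifications match exactly. One must also justify the two normalizations invoked, namely that $\tr_{\Delta/F(x)}=\id$ for the degree-one point $\Delta$ and that $v_\infty$ has a unique unramified extension with residue field $F(x)$ and residue map $\psi$. These verifications are routine but are where all the care is needed.
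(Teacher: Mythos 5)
Your argument is correct and follows essentially the same route as the paper: the paper's proof introduces $\Phi^x=\Tr_{F(x)(t)/F(t)}\circ[t-\iota(x)]\circ\iota^*$, which is exactly your $\theta=\Tr_\Psi(\rho_\Delta(\mu))$ with an explicit choice of coresidue at the diagonal point, and then reads off $\partial_x\circ\Phi^x=\Id$ and $-\partial_\infty\circ\Phi^x=\Tr_{F(x)/F}$ from \ref{itm:eR3b} together with Lemmas \ref{eR3cdContraction} and \ref{eR3eContraction}. The only cosmetic difference is that you outsource the residue computation at infinity over $F(x)$ to the normalization $\tr_{\Delta/F(x)}=\id$ at the rational point $\Delta$, whereas the paper performs that same calculation directly on the element $[t-\iota(x)]\cdot\iota^*(\mu)$ via the two cited lemmas.
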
\label{UniqueTransfers}
\begin{proof}
Fix a field $F$ and $F(t)$ the field of rational fractions with coefficients in $F$ in one variable $t$, and fix a simple extension $\psi: F\to F(x)$. Consider the canonical inclusion $\iota:F(x)\to F(x)(t)$ and define
\begin{center}
$\Phi^x:\M(F(x),\LLL_{F(x)/k})\to M(F(t),\LLL_{F(t)/k})$ \\
\end{center}
as the composite $\Phi^x=\Tr_{F(x)(t)/F(t)}\circ [t-\iota(x)] \circ \iota^*$ where $\Tr$ denotes the generalized transfers of point \textbf{(a)}. A combination of \ref{itm:eR3b}, Lemma \ref{eR3cdContraction} and Lemma \ref{eR3eContraction} shows that
\begin{align*}
\partial_x \circ \Phi^x &= \Id_{M(F(t),\LLL_{F(t)/k})}, \\
-\partial_{\infty} \circ \Phi^x&= \Tr_{F(x)/F},
\end{align*} 
which is exactly the definition of the Bass-Tate transfers $\tr_{F(x)/F}$.
\end{proof}

\section{MW-homotopy sheaves} \label{SectionMWHomotopySheaves}

\subsection{Sheaves with MW-transfers} \label{SubsectionSheavesWithMWTransfers} \label{MWRecollection}
In this subsection, we recall the basic definition of sheaves with MW-transfers in order to fix the notations. We follow the presentation of \cite[Chapter 2]{BCDFO}.
\begin{Par}
Let $X$ and $Y$ be smooth schemes over $k$ and let $T\subset X\times Y$ be a closed subset. Any irreducible component of $T$ maps to an irreducible component of $X$ through the projection $X\times Y\to X$ (in other words, the canonical projection map from $T$ to $X$ is finite equidimensional).
\end{Par}
\begin{Def} \label{DefAdmissiblSet}
If, when $T$ is endowed with its reduced structure, this map is finite and surjective for every irreducible component of $T$, we say that $T$ is an admissible subset of $X\times Y$. We denote by $\mathcal{A}(X,Y)$ the set of admissible subsets of $X\times Y$, partially ordered by inclusions.
\end{Def}
\begin{Par}
If $Y$ is equidimensional, $d=\dim Y$ and $p_Y:X\times Y\to Y$ is the projection, we define a covariant functor
\begin{center}
$\mathcal{A}(X,Y)\to \Ab$
\end{center}
by associating to each admissible subset $T\in \mathcal{A}(X,Y)$ the group $\CHt^d_T(X\times Y,p^*_Y\LLL_{Y/k})$ and to each morphism $T'\subset T$ the extension of support morphism
\begin{center}
$\CHt^d_{T'}(X\times Y,p^*_Y \LLL_{Y/k})\to \CHt^d_T(X\times Y,p^*_Y\LLL_{Y/k})$
\end{center}
and, using that functor, we set
\begin{center}
$\Cortilde_k(X,Y)=\colim_{T\in \mathcal{A}(X,Y)}\CHt^d_T(X\times Y,p^*_Y\LLL_{Y/k})$.
\end{center}
If $Y$ is not equidimensional, then $Y=\bigsqcup_j Y_j$ with each $Y_j$ equidimensional and we set
\begin{center}
$\Cortilde_k(X,Y)=\prod_j \Cortilde_k(X,Y_j)$.
\end{center}
By additivity of Chow-Witt groups, if $X=\bigsqcup_i X_i$ and $Y=\bigsqcup_jY_j$ are the respective decompositions of $X$ and $Y$ in irreducible components, we have
\begin{center}
$\Cortilde_k(X,Y)=\prod_{i,j} \Cortilde_k(X_i,Y_j)$.
\end{center}
\end{Par}
\begin{Rem}
In the sequel, we will simply write $\LLL_Y$ in place of $p^*_Y\LLL_{Y/k}$.
\end{Rem}

\begin{Exe} \label{MWtransfersExe}
Let $X$ be a smooth scheme of dimension $d$. Then
\begin{center}
$\Cortilde_k(\Spec(k),X)=\bigoplus_{x\in X^{(d)}} \CHt^d_{\{x\}}(X,\LLL_X)=\bigoplus_{x\in X^{(d)}} \GW(\kappa(x),\LLL_{\kappa(x)/k})$.
\end{center}
On the other hand, $\Cortilde_k(X,\Spec(k))=\CHt^0(X)=\kMW_0(X)$ for any smooth scheme $X$.
\end{Exe}
\begin{Par}
The group $\Cortilde_k(X,Y)$ admits an alternate description which is often useful. Let $X$ and $Y$ be smooth schemes, with $Y$ equidimensional. For any closed subscheme $T\subset X\times Y$ of codimension $d=\dim Y$, we have an inclusion
\begin{center}
$\CHt^d_T(X\times Y,\LLL_{Y/k})\subset \bigoplus_{x\in (X\times Y)^{(d)}}\kMW_0(\kappa(x),\det(\LLL_{x}\otimes (\LLL_{Y/k})_x))$
\end{center}
and thus
\begin{center}
$\Cortilde_k(X,Y)=\bigcup_{T\in \mathcal{A}(X,Y)} \CHt^d_T(X\times Y, \LLL_{Y/k}) \subset \bigoplus_{x\in (X\times Y)^{(d)}}\kMW_0(\kappa(x),\det(\LLL_{x}\otimes (\LLL_{Y/k})_x))$. 
\end{center}
In general, the inclusion $\Cortilde_k(X,Y) \subset \bigoplus_{x\in (X\times Y)^{(d)}}\kMW_0(\kappa(x),\det(\LLL_{x}\otimes (\LLL_{Y/k})_x))$ is strict as shown by Example \ref{MWtransfersExe}. As an immediate consequence of this description, we see that the map
\begin{center}
$\CHt^d_T(X\times Y,\LLL_Y)\to \Cortilde_k(X,Y)$
\end{center}
is injective for any $T\in \mathcal{A}(X,Y)$.
\end{Par}

\begin{Par}[Composition of finite MW-correspondences]
Let $X,Y$ and $Z$ be smooth schemes of respective dimension $d_X,d_Y$ and $d_Z$, with $X$ and $Y$ connected. Let $V\in \mathcal{A}(X,Y)$ and $\mathcal{A}(Y,Z)$ be admissible subsets. If $\beta\in \CHt^{d_Y}_V(X\times Y,\LLL_{Y/k})$ and $\alpha \in \CHt^{d_Z}_T(Y\times Z,\LLL_{Z/k})$ are two cycles, then the expression
\begin{center}
$\alpha \circ \beta = (q_{XY})_*[(q_{YZ})^*\beta \cdot  (p_{XY})^*\alpha]$
\end{center}
is well-defined and yields a composition
\begin{center}
$\circ: \Cortilde_k(X,Y)\times \Cortilde_k(Y,Z) \to \Cortilde_k(X,Z)$
\end{center}
which is associative.
\end{Par}

\begin{Def}
The category of finite MW-correspondences over $k$ is by definition the category $\Cortilde_k$ whose objects are smooths schemes and whose morphisms are the abelian groups $\Cortilde_k(X,Y)$.
\end{Def}

\begin{Rem}
The category $\Cortilde_k$ is a symmetric monoidal additive category (see \cite[Chapter 2, Lemma 4.4.2]{BCDFO}).
\end{Rem}

\begin{Def}\label{DefMWhomotopySheaves}
A presheaf with MW-transfers is a contravariant additive functor $\Cortilde_k \to \Ab$. A (Nisnevich) sheaf with MW-transfers is a presheaf with MW-transfers such that its restriction to $\Sm$ via the graph functor is a Nisnevich sheaf. We denote by $\widetilde{\operatorname{PSh}}(k)$ (resp. $\widetilde{\operatorname{Sh}}(k)$) the category of presheaves (resp. sheaves) with MW-transfers and by $\HIMW(k)$ the category of homotopy sheaves with MW-transfers (also called {\em MW-homotopy sheaves}).
\end{Def}

\begin{Exe}
For any $j\in \ZZ$, the contravariant functor $X\mapsto \kMW_j(X)$ is a presheaf on $\Cortilde_k$.
\end{Exe}

\begin{Par}{\sc Pushforwards} \label{MWPushforward}
Let $X$ and $Y$ be two smooth schemes of dimension $d$ and let $f:X\to Y$ be a finite morphism such that any irreducible component of $X$ surjects to the irreducible component of $Y$ it maps to. Assume that we have an orientation $(\LL,\psi)$ of $\LLL_f$, that is an isomorphism $\psi: \LL\otimes \LL \to \LLL_f$ of line bundles. We define a finite correspondence $\alpha(f,\LL,\psi)\in \Cortilde_k(Y,X)$. Let $\gamma_f':X\to Y\times X$ be the transpose of the graph of $f$.  Since $X$ is an admissible subset, we have a transfer map
\begin{center}
$(\gamma_f')_*:\kMW_0(X,\LLL_f)\to \CHt_X^d(Y\times X,\LLL_{X/k})\to \Cortilde_k(Y,X)$.
\end{center}
The map $\psi$ yields an isomorphism $\kMW_0(X)\to \kMW_0(X,\LLL_f)$. We define the finite MW-correspondence $\alpha(f,\LL,\psi)$ as the image of $\ev{1}$ under the composite
\begin{center}
$\kMW_0(X)\to \kMW_0(X,\LLL_f)\to \CHt_X^d(Y\times X,\LLL_{X/k})\to \Cortilde_k(Y,X)$.
\end{center}
\par Now let $M\in \HIMW(k)$ be a homotopy sheaf with MW-transfers. Denote by $(M\otimes \LLL_f)_X$ (resp. $M_Y$) the canonical (twisted) sheaf associated to $M$ defined on the Zariski site $X_{\txt{Zar}}$ (resp. $Y_{\txt{Zar}}$) introduced in \ref{DefMtwisted} and define a natural transformation
\begin{center}
$f_*: f_*(M\otimes \LLL_f)_X\to M_Y$
\end{center}
by taking (as in \ref{DefMtwisted}) the sheafification of the natural transformation of presheaves
\begin{center}
$V\in Y_{\txt{Zar}} \mapsto 
(M(f^{-1}(V),\LLL_{f_{|f^{-1}(V)}})\to M(V))$\\
$\,\,\,\,\,\,\,\,\,\,\,\,\,\,
(\mu\otimes l)\mapsto \alpha(f,\psi_l,L_l)^*(\mu)$
\end{center}
where $(\psi_l,\LL_l)$ is the orientation of $\LLL_{f_{|f^{-1}(V)}}$ associated to $l\in \LLL_{f_{|f^{-1}(V)}}^\times$. Taking global sections, this leads in particular to a map
\begin{center}
$M([{}^tf]):M(X,\LLL_f)\to M(Y)$
\end{center}
for any finite morphism $f:X\to Y$.

\par We can check the following propositions.
\end{Par}

\begin{Pro} \label{FunctorialityMWPushforward}
Let $M\in \HIMW(k)$ and consider two finite morphisms
$\xymatrix{
X
\ar[r]^f &
Y
\ar[r]^g &
Z}$
of smooth schemes. Then
\begin{center}
$M([{}^t{(g\circ f)}])=M([{}^tg])\circ M([{}^tf])$.
\end{center}
\end{Pro}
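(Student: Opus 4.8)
The plan is to reduce the stated identity to the composition law for finite MW-correspondences together with the contravariant functoriality of $M$. Recall that, after a choice of orientation $(\LL,\psi)$ of $\LLL_f$, the map $M([{}^tf])$ is by construction obtained by applying the functor $M$ to the oriented transpose graph $\alpha(f,\LL,\psi)\in \Cortilde_k(Y,X)$, and that passing to the $\LLL_f$-twisted sheaf (as in \ref{MWPushforward}) is precisely what renders the result independent of this choice. It therefore suffices to establish the corresponding identity at the level of finite MW-correspondences, namely that for compatible orientations one has
\begin{center}
$\alpha(g\circ f)=\alpha(f)\circ \alpha(g)$ in $\Cortilde_k(Z,X)$,
\end{center}
and then to apply $M$.

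The geometric heart of the matter is this correspondence identity, which is the transpose analogue of the functoriality of the graph functor $\Sm\to \Cortilde_k$ from \cite[Chapter 2]{BCDFO}. Computing the composite $\alpha(f)\circ \alpha(g)$ over the triple product $Z\times Y\times X$, the two transpose graphs are functional along the $Y$-factor, so the relevant Chow--Witt cycles meet properly and their intersection product, after pushforward, is supported on the transpose graph of $g\circ f$. The twists are controlled by the canonical isomorphism $\LLL_{g\circ f}\simeq \LLL_f\otimes f^*\LLL_g$ coming from the distinguished triangle of cotangent complexes; I would check that under this isomorphism the product of the chosen orientations of $\LLL_f$ and $\LLL_g$ is exactly the induced orientation of $\LLL_{g\circ f}$, so that the oriented cycles match on the nose.

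Granting this, the proposition is formal: since $M$ is a contravariant additive functor on $\Cortilde_k$, it reverses composition, whence
\begin{center}
$M([{}^t(g\circ f)])=M\bigl(\alpha(f)\circ \alpha(g)\bigr)=M(\alpha(g))\circ M(\alpha(f))=M([{}^tg])\circ M([{}^tf])$,
\end{center}
the outer twists being matched through $\LLL_{g\circ f}\simeq \LLL_f\otimes f^*\LLL_g$ together with $(g\circ f)_*=g_*f_*$. I expect the main obstacle to be exactly the orientation and twist bookkeeping inside the Chow--Witt composition: verifying that the intersection-theoretic composite of the two oriented transpose graphs equals, as a twisted cycle, the oriented transpose graph of $g\circ f$, with the correct quadratic-form and sign data intrinsic to $\CHt$. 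Everything else is a direct unwinding of the definitions of \ref{MWPushforward}.
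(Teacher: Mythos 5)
Your proposal is correct and follows essentially the same route as the paper: the paper's proof is precisely the one-line reduction to the correspondence-level identity $\alpha(f,\LL,\psi)\circ\alpha(g,\LL',\psi')=\alpha(g\circ f,\LL\otimes f^*\LL',\psi\otimes f^*\psi')$ under the canonical isomorphism $\LLL_{g\circ f}\simeq\LLL_f\otimes f^*\LLL_g$, followed by contravariant functoriality of $M$. You spell out somewhat more of the intersection-theoretic and orientation bookkeeping behind that identity than the paper does, but the strategy is identical.
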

\begin{proof}
Keeping the previous notations, if $(\LL',\psi')$ is an orientation of $\LLL_g$, then $(\LL\otimes f^*\LL',\psi\otimes f^*\psi')$ is an orientation of $\LLL_{g\circ f}=\LLL_f\otimes f^*\LLL_g$, and we have $\alpha(f,\LL,\psi)\circ \alpha(g,\LL',\psi')=\alpha(g \circ f, \LL\otimes f^*\LL', \psi \otimes f^*\psi')$.
\end{proof}

\begin{Pro} \label{R3bMWtransfers}
Let $M\in \HIMW(k)$ be a homotopy sheaf with MW-transfers. Let $i:Z\to X$ and $i':T\to Y$ be two closed immersions and let $f:Y\to X$ be a finite morphism. The following diagram 
\begin{center}
$\xymatrix{
M(Y- T,\LLL_{(Y-T)/k})
\ar[r]^-{\partial}
\ar[d]_{M([{}^tf])}
&
\M(T,\LLL_{T/k})
\ar[d]^{M([{}^tf])}
\\
M(X-Z,\LLL_{(X-Z)/k})
\ar[r]^-{\partial}
&
\M(Z,\LLL_{Z/k})
}$
\end{center}
is commutative.
\end{Pro}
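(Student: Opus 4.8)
The plan is to exhibit $M([{}^tf])$ as a morphism between the two cohomology-with-support long exact sequences of \ref{HILongExactSequence}, so that commutativity of the square becomes the naturality of the connecting map $\partial$ with respect to finite pushforward. First I would fix the geometry: since $f:Y\to X$ is finite with $f(T)\subset Z$ and $f(Y-T)\subset X-Z$, it induces finite morphisms $T\to Z$ and $Y-T\to X-Z$, and the correspondence $\alpha(f,\LL,\psi)$ of \ref{MWPushforward} restricts compatibly to each stratum. By the purity identifications of \ref{ResidueMapsMoinsUn}, the groups $H^1_Z(X,M)$ and $H^1_T(Y,M)$ are $\M(Z,\LLL_{Z/k})$ and $\M(T,\LLL_{T/k})$, so the boundary maps in the two sequences are exactly the maps $\partial$ in the statement.

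Next I would assemble the ladder. Using the transposed-graph description of $\alpha(f,\LL,\psi)$ together with the functoriality \ref{FunctorialityMWPushforward}, the pushforward $M([{}^tf])$ is computed pointwise by the transfer maps $\Tr_{\kappa(y)/\kappa(x)}$, exactly as the Rost-Schmid pushforward $f_*$ of \ref{RostSchmidPushforward}; it therefore commutes with the restriction maps $j^*$ and the support maps $i_*$ of both sequences. Granting this, the remaining square involving $\partial$ reduces to the assertion that finite pushforward commutes with the differential $d$ of the Rost-Schmid complex, which is the MW-analogue of Proposition \ref{Prop4.6eff}(i). Concretely, one unwinds $\partial$ into its codimension-one residues and is left with the local identity $\partial_v\circ \Tr=\sum_{w\mid v}\Tr\circ \partial_w$ over the finitely many extensions $w$ of a valuation $v$ — precisely axiom \ref{itm:eR3b}. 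This local statement follows the pattern of \cite[§6]{Fel18} and \cite[Corollary 5.23]{BachmannYakerson18}, the coherence of the many residues across the diagram being supplied by Proposition \ref{LocalizationContractedHI}.

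I expect the principal obstacle to be the bookkeeping of the determinant twists: one must verify that the canonical isomorphisms relating $\LLL_{(Y-T)/k}$, $\LLL_f$ and the normal-bundle line appearing in purity are compatible, so that both composites genuinely land in $\M(Z,\LLL_{Z/k})$ and agree on the nose rather than merely up to a twist. A secondary subtlety, already met in the proof of Theorem \ref{HIgtrToHIMW}, is Voevodsky's trick: since the relevant projection is finite only after restricting to the admissible support, one works on that support throughout when defining the pushforward, so that $f_*$ is available at every stage of the ladder.
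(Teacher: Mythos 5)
The paper actually records no proof of this proposition (it is one of the statements introduced by ``We can check the following propositions''), so your argument can only be judged on its own terms and against the logical role the proposition plays later. Your opening move --- realize $M([{}^tf])$ as a morphism between the two cohomology-with-support long exact sequences of \ref{HILongExactSequence} and read off the $\partial$-square as naturality of the connecting map --- is the right one. The clean way to complete it is to note that $M([{}^tf])$ is by construction induced by a single morphism of Nisnevich sheaves on $X$, namely $f_*(M\otimes \LLL_f)_Y\to M_X$ from \ref{MWPushforward}; since $f$ is finite, $f_*$ is exact and $H^*_Z(X,f_*N)\cong H^*_{f^{-1}(Z)}(Y,N)$, so the localization sequence for $(Z,X)$ with coefficients in $f_*(M\otimes\LLL_f)_Y$ is identified with the one for $(T,Y)$, and functoriality of the sequence in the coefficient sheaf yields the entire ladder at once, exactly as in Proposition \ref{eR3eHI}. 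Your concern about the determinant twists is then settled in one place, by checking $\LLL_{T/Y}\otimes\LLL_f|_T\otimes\LLL_{Y/k}|_T\cong\LLL_{T/k}$ under the purity identification.

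The way you actually finish, however, is circular. You reduce the $\partial$-square to the local identity $\partial_v\circ\Tr=\sum_{w\mid v}\Tr\circ\partial_w$ and invoke axiom \ref{itm:eR3b}, and you also lean on Proposition \ref{Prop4.6eff}(i), whose proof itself rests on \ref{itm:eR3b}. But \ref{itm:eR3b} is an axiom of the \emph{generalized-transfer} structure, and at the point where Proposition \ref{R3bMWtransfers} is stated $M$ carries only MW-transfers; the paper then uses this very proposition in Subsection \ref{SubsectionStructureGeneralizedTransfers} to verify \ref{itm:eR3b} for $\tilde{\Gamma}^*(M)$. So you are assuming the statement the proposition exists to supply. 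Likewise, identifying $M([{}^tf])$ pointwise with the Rost--Schmid transfers $\Tr_{\kappa(y)/\kappa(x)}$ presupposes the comparison that is only established in Theorem \ref{EquivalenceHIMWandHIgtr}. To repair the argument, drop the Rost--Schmid detour and argue sheaf-theoretically as above; alternatively one can first prove $M\simeq(\Gm\otimes_{\HIMW}M)_{-1}$ by cancellation and identify the MW-pushforward with the Bass--Tate transfer via Theorem \ref{UnicityTransfers}, for which Morel's residue compatibility is available --- but either route must be set up without quoting \ref{itm:eR3b} for $M$ itself.
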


\begin{Par} {\sc Tensor products}
Let $X_1,X_2,Y_1,Y_2$ be smooth schemes over $\Spec k$. Let $d_1=\dim Y_1$ and $d_2=\dim Y_2$. Let $\alpha_1\in \CHt_{T_1}^{d_1}(X_1\times Y_1,\LLL_{Y_1/k})$ and $\alpha_2\in \CHt_{T_2}^{d_2}(X_2\times Y_2,\LLL_{Y_2/k})$ for some admissible subsets $T_i\subset X_i\times Y_i$. The exterior product defined in \cite[§4]{Fasel13} gives a cycle
\begin{center}
$(\alpha_1\times \alpha_2)\in \CHt^{d_1+d_2}_{T_1\times T_2}(X_1\times Y_1\times X_2\times Y_2,p^*_{Y_1}\LLL_{Y_1/k}\otimes p^*_{Y_2}\LLL_{Y_2/k})$
\end{center}
where $p_{Y_i}:X_1\times Y_1 \times X_2\times Y_2\to Y_i$ is the canonical projection to the corresponding factor. Let $\sigma:X_1\times Y_1\times X_2\times Y_2\to X_1\times X_2\times Y_1\times Y_2$ be the transpose isomorphism. Applying $\sigma_*$, we get a cycle 
\begin{center}
$\sigma_*(\alpha_1\times \alpha_2)\in \CHt^{d_1+d_2}_{\sigma(T_1\times T_2)}(X_1\times X_2\times Y_1\times Y_2,p^*_{Y_1}\LLL_{Y_1/k}\otimes p^*_{Y_2}\LLL_{Y_2/k})$.
\end{center}
Since $p^*_{Y_1}\LLL_{Y_1/k}\otimes p^*_{Y_2}\LLL_{Y_2/k})=\LLL_{Y_1\times Y_2/k}$, it is straightforward to check that $\sigma(T_1\times T_2)$ is finite and surjective over $X_1\times X_2$. Thus $\sigma_*(\alpha_1\times \alpha_2)$ defines a finite MW-correspondence between $X_1\times X_2$ and $Y_1\times Y_2$.
 
\end{Par}
\begin{Def}
Let $X_1,X_2,Y_1,Y_2$ be smooth schemes over $\Spec k$, and $\alpha_1\in \Cortilde_k(X_1,Y_1)$ and $\alpha_2\in \Cortilde_k(X_2,Y_2)$ two MW-correspondences. We define their tensor products as $X_1\otimes X_2=X_1\times X_2$ and $\alpha_1\otimes \alpha_2=\sigma_*(\alpha_1\times \alpha_2)$.
\end{Def}

\begin{Par} \label{MWTransfersMonoidal}
We denote by $\tilde{c}(X):Y\mapsto \Cortilde_k(Y,X)$ the representable presheaf associated to a smooth scheme $X$ (be careful that this is not a Nisnevich sheaf in general). The category of MW-presheaves is an abelian Grothendieck category with a unique symmetric monoidal structure such that the Yoneda embedding
\begin{center}
$\Cortilde_k\to \widetilde{\operatorname{Sh}}(k),X\mapsto \tilde{a}\tilde{c}(X)$
\end{center}
is symmetric monoidal (where $\tilde{a}$ is the sheafification functor, see \cite[Chapter 3, §1.2.7]{BCDFO}). The tensor product is denoted by $\otimes_{\HIMW}$ and commutes with colimits (hence the monoidal structure is closed, see \cite[Chapter 3, §1.2.14]{BCDFO}).

\end{Par}

\begin{Par} \label{GWstructureCompatibility}
Let $M\in \HIMW(k)$ be a homotopy sheaf with MW-transfers. The $GW$-module structure induced on $\M$ is compatible with the $GW$-module structure defined on $\M$ in \ref{GWstructureContracted}.
\end{Par}
\subsection{Structure of generalized transfers} \label{SubsectionStructureGeneralizedTransfers}

In this section, we study the category of MW-homotopy sheaves. For any MW-homotopy sheaf we construct a canonical structure of generalized transfers (see Definition \ref{DefGeneralizedTransfers}).
\begin{Par}
Let $M \in \HIMW(k)$ be a homotopy sheaf with MW-transfers. We denote by $\tilde{\Gamma}^*(M)$ the homotopy sheaf $M$ equipped with its structure of $\GW$-module coming from its structure of MW-transfers, and we define generalized transfers as follows. Let $\psi:E\to F$ be a finite extension of fields. Consider a smooth model $(X,x)$ (resp. $(Y,y)$) of $E/k$ (resp. $F/k$) such that $\psi$ corresponds to a map $Y_y\to X_x$. We may assume that this map is induced by a finite morphism $f:Y\to X$. We consider the pushforward on the MW--homotopy sheaf $\tilde \Gamma^*(M)$ with respect to the finite morphism $f$ defined in \ref{MWPushforward} and take the limit over all model of $F/E$ so that we obtain a morphism
\begin{center}
$\psi^*:\tilde{\Gamma}^*(M)(F,\LLL_{F/k})\to \tilde{\Gamma}^*(M)(E,\LLL_{E/k})$
\end{center}
of abelian groups. This defines a homotopy sheaf $\tilde{\Gamma}^*(M)$ canonically isomorphic to $M$ (as presheaves) and equipped with a structure of transfers \ref{itm:eD2}.
\end{Par}
\begin{The}
Keeping the previous notations, the transfer maps $\psi^*$ defines a structure of generalized transfers (see Definition \ref{DefGeneralizedTransfers}) on the homotopy sheaf $\tilde{\Gamma}^*(M)$.
\end{The}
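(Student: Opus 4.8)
The plan is to verify, one by one, the data and axioms of Definition~\ref{DefGeneralizedTransfers} for the pair $(\tilde{\Gamma}^*(M),\psi^*)$. The $\GW$-module structure is already part of the construction, and by \ref{GWstructureCompatibility} it is the one compatible with the $\GW$-action of \ref{GWstructureContracted}; the transfer datum \ref{itm:eD2} is supplied by the maps $\psi^*$ defined above as limits of MW-pushforwards $M([{}^tf])$ over finite models $f\colon Y\to X$ of $\psi$. It therefore remains to check the axioms \ref{itm:eR1b}, \ref{itm:eR1c}, \ref{itm:eR2b}, \ref{itm:eR2c} and \ref{itm:eR3b}. All but \ref{itm:eR1c} reduce directly to properties of the MW-pushforward established earlier; the base change axiom is the substantial point.

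For functoriality \ref{itm:eR1b}, the equality $\Tr_{\Id}=\Id$ is immediate, and $\Tr_{\psi\circ\phi}=\Tr_\phi\circ\Tr_\psi$ follows from Proposition~\ref{FunctorialityMWPushforward}: a composite of finite field extensions is modeled by a composite of finite morphisms, and $M([{}^t{(f\circ g)}])=M([{}^tf])\circ M([{}^tg])$ passes to the limit over models. The projection formulas \ref{itm:eR2b} and \ref{itm:eR2c} express the $\GW(E)$-linearity of $\psi^*$ and the reciprocal formula; both follow from the fact that the $\GW$-action is realized by composition with (twisted) diagonal correspondences and that composition of finite MW-correspondences is associative and $\GW$-bilinear (see \ref{MWPushforward} together with \ref{GWstructureCompatibility}). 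For the residue compatibility \ref{itm:eR3b}, I would apply Proposition~\ref{R3bMWtransfers} to the finite morphism coming from the integral closure of $\OO_v$ in $F$: taking $X=\Spec\OO_v$, $Z=\Spec\kappa(v)$ and $Y$ the corresponding semilocal model, the closed fibre decomposes as $\bigsqcup_{w\mid v}\Spec\kappa(w)$, the boundary maps become the residues $\partial_v,\partial_w$ of \ref{ResidueMapsMoinsUn}, and the commuting square of Proposition~\ref{R3bMWtransfers} becomes exactly $\partial_v\circ\Tr_\phi=\sum_w \Tr_{\phi_w}\circ\partial_w$ after passing to the limit and summing over the finitely many $w$ extending $v$.

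The hard part is the base change axiom \ref{itm:eR1c}. Given $\phi\colon E\to F$ finite and $\psi\colon E\to L$ arbitrary, I would choose a finite model $f\colon Y\to X$ of $\phi$ and a model $g\colon X'\to X$ of $\psi$, and form the Cartesian square with $Y'=Y\times_X X'$, finite projection $f'\colon Y'\to X'$, and $g'\colon Y'\to Y$. The key input is a base change identity $g^*\circ f_*=f'_*\circ g'^*$ for the pushforward and pullback on the MW Rost--Schmid complex, which I expect to deduce from the composition law for finite MW-correspondences applied to the square; this is the main technical obstacle, being the one statement not yet recorded verbatim in the preceding sections. Granting it, I would evaluate at the generic point of $X'$, namely $\Spec L$: the fibre of $f'$ there is $\Spec R$ with $R=F\otimes_E L$, which decomposes over the finitely many primes $\mathfrak{p}\in\Spec R$ into the residue fields $R/\mathfrak{p}$. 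Because $Y'$ may be non-reduced along this fibre, the pushforward $f'_*$ weights each component $R/\mathfrak{p}$ by the quadratic multiplicity $e_{\mathfrak{p},\epsilon}=\sum_{i=1}^{e_\mathfrak{p}}\ev{-1}^{i-1}$ attached to the length $e_\mathfrak{p}$ of the localized ring $R_{(\mathfrak{p})}$; tracking these $\GW$-valued multiplicities and passing to the limit over models then yields $M(\psi)\circ\Tr_\phi=\sum_{\mathfrak{p}} e_{\mathfrak{p},\epsilon}\Tr_{\phi_\mathfrak{p}}\circ M(\psi_\mathfrak{p})$. This is the quadratic refinement of the classical base change computation and runs parallel to the inductive argument of Theorem~\ref{R1c_fort}; the delicate points are the correct identification of the non-reduced fibre and the bookkeeping of the quadratic lengths, exactly as in \emph{loc. cit.}
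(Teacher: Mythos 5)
Your treatment of \ref{itm:eR1b} and \ref{itm:eR3b} matches the paper's (via Propositions \ref{FunctorialityMWPushforward} and \ref{R3bMWtransfers} respectively), but for the base change axiom \ref{itm:eR1c} your argument has a genuine gap. You propose to prove it directly on the MW Rost--Schmid complex from an identity $g^*\circ f_*=f'_*\circ g'^*$ for a Cartesian square of finite models, which you yourself flag as ``the one statement not yet recorded verbatim in the preceding sections'' and which you only ``expect to deduce from the composition law for finite MW-correspondences.'' That expected deduction is precisely where the content lies: one must show that the multiplicity produced by the MW intersection product along the possibly non-reduced fibre $\Spec(F\otimes_E L)$ is exactly the quadratic form $e_{\mathfrak{p},\epsilon}$ attached to the length of $R_{(\mathfrak{p})}$. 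This is a quadratic refinement of a Serre-multiplicity computation, not a formal consequence of associativity of composition, and as written your proposal does not supply it. The same (milder) criticism applies to your appeal to ``$\GW$-bilinearity of composition'' for the projection formulas \ref{itm:eR2b} and \ref{itm:eR2c}.

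The paper sidesteps all of this by a different reduction, which is the key idea you are missing: the cancellation theorem gives a canonical isomorphism $\tilde{\Gamma}^*(M)\simeq(\Gm\otimes_{\HIMW}\tilde{\Gamma}^*(M))_{-1}$ (Lemma \ref{LemHIMW}), so $\tilde{\Gamma}^*(M)$ is a \emph{contracted} homotopy sheaf; the unicity theorem \ref{UnicityTransfers} then identifies the MW-transfer $\psi^*$ with the Bass--Tate transfer on this contraction, and the base change rule \ref{itm:eR1c} and the projection formulas \ref{itm:eR2} become instances of Theorem \ref{R1c_fort} and Proposition \ref{ContractionProjectionFormulas} (via Lemma \ref{R2bMoinsUn}), which are already proved for Bass--Tate transfers on any contraction. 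If you want to keep your direct route you would need to actually establish the base change identity with its quadratic multiplicities for MW pushforwards (a stable analogue is carried out in \cite{Fel18}, but the paper deliberately does not redo it in this effective setting); otherwise the efficient path is the paper's: exhibit $\tilde{\Gamma}^*(M)$ as a contraction via cancellation and quote \ref{UnicityTransfers} and \ref{R1c_fort}.
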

\begin{proof}
The functoriality property \ref{itm:eR1b} results from Proposition \ref{FunctorialityMWPushforward}.

According to Proposition \ref{R2bMoinsUn} and Theorem \ref{R1c_fort}, the projection formulas \ref{itm:eR2} and the base change rule \ref{itm:eR1c} are true for any contracted homotopy sheaf hence we only have to prove that $\tilde{\Gamma}^*(M)$ is a contracted homotopy sheaf:
\begin{Lem} \label{LemHIMW}
Let $M\in \HIMW(k)$ a MW-homotopy sheaf. Then there is a canonical isomorphism
\begin{center}
$\tilde{\Gamma}^*(M)\simeq (\Gm \otimes_{\HIMW} \tilde{\Gamma}^*(M))_{-1}$
\end{center}
of homotopy sheaves which is compatible with the generalized transfers structure in the sense that the diagram
\begin{center}

$\xymatrix{
\tilde{\Gamma}^*(M)(E(x))
\ar[r]^{\psi^*}
\ar[d]^{\simeq}
&
\tilde{\Gamma}^*(M)(E)
\ar[d]^{\simeq}
\\
(\Gm \otimes_{\HIMW} \tilde{\Gamma}^*(M))_{-1}(E(x))
\ar[r]^{\tr_{x/E}}
&
(\Gm \otimes_{\HIMW} \tilde{\Gamma}^*(M))_{-1}(E)
}$
\end{center}
is commutative for any simple extension $\psi:E\to E(x)$ of fields, where $\tr_{x/E}$ is the Bass-Tate transfer map defined in \ref{BassTateTransfersDefinition}.
\end{Lem}

\begin{proof}
The isomorphism $\tilde{\Gamma}^*(M)\simeq (\Gm \otimes_{\HIMW} \tilde{\Gamma}^*(M))_{-1}$ is an equivalent reformulation of the cancellation theorem \cite[Theorem 4.0.1]{FasOst17}. The second assertion is a corollary of Theorem \ref{UnicityTransfers}.
\end{proof}

Still denoting by $M\in \HIMW(k)$ a MW-homotopy sheaf, we need to check that $\tilde{\Gamma}^*(M)$ satisfies \ref{itm:eR3b} where $M\in \HIMW(k)$, which deduced from the definitions and Proposition \ref{R3bMWtransfers}.

\end{proof}

\begin{Par}
As in \ref{HIgtrFunctor}, we see that $\tilde{\Gamma}^*$ defines a functor 
\begin{center}

$\tilde{\Gamma}^*:\HIMW(k) \to \HIgtr(k)$ \\
$M\mapsto \tilde{\Gamma}^*(M)$
\end{center}
which is conservative.

\end{Par}

\begin{The} \label{EquivalenceHIMWandHIgtr}
Keeping the previous notations, the functors
\begin{center}
$\xymatrix{
\HIMW(k)
\ar@<1ex>[r]^{\tilde{\Gamma}^*}
&
\HIgtr(k)
\ar@<1ex>[l]^{\tilde{\Gamma}^*}
}$
\end{center}
form an equivalence of categories.
\end{The}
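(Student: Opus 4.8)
The plan is to exhibit the two functors as mutually quasi-inverse by constructing natural isomorphisms $\tilde{\Gamma}_* \circ \tilde{\Gamma}^* \simeq \Id_{\HIMW(k)}$ and $\tilde{\Gamma}^* \circ \tilde{\Gamma}_* \simeq \Id_{\HIgtr(k)}$. The structural fact that makes this feasible, already recorded in Theorem \ref{MWModuleFM} and in the construction of $\tilde{\Gamma}^*$ in Subsection \ref{SubsectionStructureGeneralizedTransfers}, is that \emph{both} functors leave the underlying presheaf of abelian groups unchanged up to a canonical isomorphism: for $N \in \HIgtr(k)$ one has $\tilde{\Gamma}_*(N) \simeq N$ as presheaves, and for $M \in \HIMW(k)$ one has $\tilde{\Gamma}^*(M) \simeq M$ as presheaves. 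Consequently each composite is, on underlying presheaves, canonically isomorphic to the identity, and the entire content of the theorem is to verify that these canonical isomorphisms are compatible with the extra structure (MW-transfers on one side, generalized transfers on the other), and natural.

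First I would treat the composite $\tilde{\Gamma}^* \circ \tilde{\Gamma}_*$ on $\HIgtr(k)$. Let $N \in \HIgtr(k)$. The generalized transfer $\Tr_{F/E}$ attached to a finite extension $\psi : E \to F$ is, by Lemma \ref{LemHIgtr}, the colimit over finite models $f : Y \to X$ of the Rost--Schmid pushforwards $f_*$ of \ref{RostSchmidPushforward}. On the other hand, the generalized transfer carried by $\tilde{\Gamma}^*(\tilde{\Gamma}_*(N))$ is reconstructed from the MW-pushforwards $\tilde{\Gamma}_*(N)([{}^t f])$ of the MW-homotopy sheaf $\tilde{\Gamma}_*(N)$, which by the very definition of the correspondence action $\alpha^*(\beta) = (p_X)_*(\alpha \cdot p_Y^*(\beta))$ in Theorem \ref{HIgtrToHIMW} is again the same pushforward $f_*$. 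Since the $\GW$-module structure and the residue maps are preserved tautologically, the canonical presheaf isomorphism upgrades to an isomorphism in $\HIgtr(k)$, and naturality in $N$ is immediate because a morphism of sheaves with generalized transfers commutes with $\Tr$, $\res$ and $\ev{a}$.

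Next I would treat $\tilde{\Gamma}_* \circ \tilde{\Gamma}^*$ on $\HIMW(k)$. Let $M \in \HIMW(k)$. Here the MW-transfer structure on $\tilde{\Gamma}_*(\tilde{\Gamma}^*(M))$ is determined by the Rost--Schmid operations of $\tilde{\Gamma}^*(M)$, that is, by the generalized transfers attached to $M$; and these were \emph{defined} in Subsection \ref{SubsectionStructureGeneralizedTransfers} to be the MW-pushforwards $M([{}^t f])$ of $M$ itself. Because a finite MW-correspondence acts through $(p_X)_*(\alpha \cdot p_Y^*(-))$, which is assembled from pushforwards for finite maps, pullbacks along essentially smooth maps and the $\GW$-action, it suffices to match these three operations; for the finite pushforwards the identification $\Tr_{F/E} = M([{}^t f])$ together with the functoriality of Proposition \ref{FunctorialityMWPushforward} and Lemma \ref{LemHIgtr} closes the loop, while pullbacks and $\GW$-action are preserved by construction. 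Naturality in $M$ again follows since a morphism of MW-homotopy sheaves commutes with all three operations.

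The main obstacle is the verification in this last step that the \emph{entire} MW-correspondence action is recovered, not merely the transfer along a finite field extension: one must check that the canonical presheaf isomorphism intertwines the action of a general $\alpha \in \Cortilde_k(X,Y)$ on both sides, and that it is compatible with composition of correspondences. This reduces, via the factorization $\alpha^*(\beta) = (p_X)_*(\alpha \cdot p_Y^*(\beta))$ and the compatibility of pushforward, pullback and $\GW$-action with the differentials established in Proposition \ref{Prop4.6eff}, to the unicity statement of Theorem \ref{UnicityTransfers}, which guarantees that the transfer manufactured by the round trip coincides with the original one on each simple extension. Conservativity of both functors, recorded in \ref{HIgtrFunctor} and after Lemma \ref{LemHIMW}, then provides a confirmation that no information is lost, and the two natural isomorphisms yield the asserted equivalence.
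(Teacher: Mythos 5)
Your proposal is correct and follows essentially the same route as the paper: both exhibit the functors as mutually quasi-inverse via the canonical presheaf isomorphisms, checking compatibility of the generalized-transfer structure via Lemma \ref{LemHIgtr} on the $\HIgtr$ side, and reducing compatibility with the full MW-correspondence action on the $\HIMW$ side to the finite pushforwards through the factorization $\alpha^*(\beta)=(p_X)_*(\alpha\cdot p_Y^*(\beta))$. The paper's own argument is terser (it does not explicitly invoke Theorem \ref{UnicityTransfers} at this point, that input having already been absorbed into Lemma \ref{LemHIMW}), but the substance is the same.
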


\begin{proof}
\par
First, let $M\in \HIgtr(k)$ be a homotopy sheaf with generalized transfers. For any smooth scheme $X$, we have a canonical isomorphism
\begin{center}
$a_X:\tilde{\Gamma}^*\tilde{\Gamma}_*(M)(X)\to M(X)$
\end{center}
which is compatible with pullback maps and the $\GW$-action. Compatibility with the generalized transfers \ref{itm:eD2} results from Lemma \ref{LemHIgtr}.

\par Second, let $M\in \HIMW(k)$ be a MW-homotopy sheaf. For any smooth scheme $X$, we have a canonical isomorphism
\begin{center}
$b_X:M \to \tilde{\Gamma}_*\tilde{\Gamma}^*(M)(X)$
\end{center}
which is compatible with (smooth) pullbacks and the $\GW$-action. Since pushforward $p_*$ of a finite map $p:Y\to X$ is locally given by the multiplication by the correspondence $\alpha(p,\psi_l,L_l)$ of \ref{MWPushforward}, we see that $b$ commutes with $p_*$. Thus $b$ commutes with the multiplication by any  cycle $\alpha \in \CHt^{d_Y}_T(X\times Y,\LLL_{Y/k})$ (where $X,Y$ are two smooth schemes and $T\in X\times Y$ is an admissible subset) thanks to the identity
\begin{center}
$\alpha^*(\beta)=(p_{X})_*(\alpha\cdot p_Y^{*}(\beta))$
\end{center}
where $p_X:T\to X\times Y$ and $p_Y:X\times Y \to  Y$ are the canonical morphisms.

\end{proof}

\section{Applications} \label{SectionApplications}

\subsection{Infinite suspensions of homotopy sheaves}

 In order to fix notations, we recall that we have the following commutative diagram of categories :
\begin{center}
$\xymatrix{
\Hp(k)
\ar@<1ex>[r]^-{\Sigma^{\infty}_{S^1}}
&
\SHS(k)
\ar@<1ex>[l]^-{\Omega^{\operatorname{\infty}}_{S^1}}
\ar@<1ex>[r]^{\Sigma^{\infty}_{\Gm}}
\ar@<1ex>[d]^N
&
\SH(k)
\ar@<1ex>[l]^-{\Omega^{\operatorname{\infty}}_{\Gm}}
\ar@<1ex>[d]^N
\\
&
\DAeff(k) 
\ar@<1ex>[u]^K
\ar@<1ex>[r]^-{\Sigma^{\infty}}
&
\DA(k)
\ar@<1ex>[l]^-{\Omega^{\operatorname{\infty}}}
\ar@<1ex>[u]^K
}$
\end{center}
where $\Hp(k)$ is the pointed unstable homotopy category, $\SHS(k)$ (resp. $\SH(k)$) is the category obtain after $S^1$-stabilization (resp. $\PP^1$-stabilization) and $\DAeff(k)$ (resp. $\DA(k)$) the (resp. stable) effective $\AAA^1$-derived category (see \cite[§5]{CD12}); all of these triangulated categories are equipped with Morel's homotopy t-structure. Since $\Omega^{\infty}$ and $K$ are t-exact, we have an equivalence (of additive symmetric monoidal categories) $\DA(k)^{\heartsuit}\simeq \SH(k)^{\heartsuit}$ and an adjunction on the respective hearts (see also \cite[§4]{DegBon17}):
\begin{center}

$\xymatrix{
\HI(k)
\ar@<1ex>[r]^{\sigma^{\infty}}
&
\HM(k).
\ar@<1ex>[l]^{\omega^{\infty}}
}$
\end{center}

\begin{Lem} \label{FullyFaithfullHearts}
With the previous notations, if the functor $\Sigma^{\infty}$ is fully faithful, then so is $\sigma^{\infty}$.
\end{Lem}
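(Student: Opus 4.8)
The plan is to deduce full faithfulness of $\sigma^{\infty}$ from that of $\Sigma^{\infty}$ by comparing the units of the two adjunctions, the key input being that $\Omega^{\infty}$ is t-exact. Recall the standard criterion: for an adjunction $L\dashv R$, the left adjoint $L$ is fully faithful if and only if the unit $\Id\to RL$ is an isomorphism. Applied to $\Sigma^{\infty}\dashv\Omega^{\infty}$, the hypothesis says that the unit $\eta\colon \Id_{\DAeff(k)}\to\Omega^{\infty}\Sigma^{\infty}$ is an isomorphism; applied to $\sigma^{\infty}\dashv\omega^{\infty}$, what must be shown is that the unit $\bar\eta\colon \Id_{\HI(k)}\to\omega^{\infty}\sigma^{\infty}$ is an isomorphism.

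First I would recall how the heart adjunction is obtained from the stable one. As $\Omega^{\infty}$ is t-exact, its left adjoint $\Sigma^{\infty}$ is right t-exact; hence for $X\in\HI(k)\subset\DAeff(k)$ the object $\Sigma^{\infty}X$ is connective, $\sigma^{\infty}X=H_0(\Sigma^{\infty}X)=\tau_{\le 0}\Sigma^{\infty}X$, and $\omega^{\infty}$ is simply the restriction of $\Omega^{\infty}$ to the heart. The unit $\bar\eta_X$ is then the composite
\[
X\xrightarrow{\ \eta_X\ }\Omega^{\infty}\Sigma^{\infty}X\xrightarrow{\ \Omega^{\infty}(\tau)\ }\Omega^{\infty}H_0(\Sigma^{\infty}X)=\omega^{\infty}\sigma^{\infty}X,
\]
where $\tau\colon\Sigma^{\infty}X\to H_0(\Sigma^{\infty}X)$ is the truncation map.

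The one real step is to check that $\Omega^{\infty}(\tau)$ is an isomorphism once $\eta_X$ is. By hypothesis $\eta_X$ is an isomorphism, so $\Omega^{\infty}\Sigma^{\infty}X\cong X$ lies in the heart, i.e. is concentrated in degree $0$. Since $\Omega^{\infty}$ is t-exact it commutes with $\tau_{\le 0}$, so $\Omega^{\infty}(\tau)$ is identified with the truncation map $\Omega^{\infty}\Sigma^{\infty}X\to\tau_{\le 0}\Omega^{\infty}\Sigma^{\infty}X$, which is an isomorphism because its source already lies in degree $0$. Equivalently, the same t-exactness identifies $\bar\eta_X$ with $H_0(\eta_X)$, which is an isomorphism because $\eta_X$ is. In either formulation $\bar\eta$ is an isomorphism, whence $\sigma^{\infty}$ is fully faithful.

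I expect no genuine obstacle: the only point needing care is the purely formal identification of the heart-level unit $\bar\eta_X$ with $H_0$ of the stable unit $\eta_X$, which is the standard compatibility of an adjunction with a t-structure whose right adjoint is t-exact (cf. the reference \cite[§4]{DegBon17} cited for the heart adjunction). Everything else is formal manipulation with truncation functors.
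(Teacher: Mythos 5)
Your proof is correct and follows essentially the same route as the paper: the paper's one-line argument is precisely that $\sigma^{\infty}(M)=H_0\Sigma^{\infty}(M)$, $\omega^{\infty}=\Omega^{\infty}$ on the heart, and the heart-level unit $\Id\to\omega^{\infty}\sigma^{\infty}$ is identified with $H_0$ applied to the stable unit $\Id\to\Omega^{\infty}\Sigma^{\infty}$, hence is an isomorphism when $\Sigma^{\infty}$ is fully faithful. You have simply spelled out the formal verification of that identification (via t-exactness of $\Omega^{\infty}$) which the paper leaves implicit.
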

\begin{proof}
For any sheaf $M\in \HI(k)$, we have $\omega^{\infty}(M)=\Omega^{\infty}(M)=H_0\Omega^{\infty}(M)$ and $\sigma^{\infty}(M)=\tau_{\geq 0}\Sigma^{\infty}(M)=H_0\Sigma^{\infty}(M)$ hence the arrow
\begin{center}
$\Id \to \omega^{\infty}\sigma^{\infty}=H_0(\Id \to \Omega^{\infty}\Sigma^{\infty})$
\end{center}
is an isomorphism if $\Sigma^{\infty}$ is fully faithful.
\end{proof}

\begin{Rem}
The converse is not true in general (except maybe for connective spectra).
\end{Rem}
We also have the following commutative diagram of categories
\begin{center}
$\xymatrix{
\DAeff(k) 
\ar[d]
\ar[r]^{\Sigma^{\infty}}
&
\DA(k)
\ar[d]
\\
\widetilde{\mathbf{DM}}^{\operatorname{eff}}(k)
\ar[r]^{\Sigma^{\infty}_{\operatorname{MW}}}
&
\widetilde{\mathbf{DM}}(k)
}$
\end{center}
where $\widetilde{\mathbf{DM}}^{\operatorname{eff}}(k)$ and $\widetilde{\mathbf{DM}}(k)$ are the categories of (effective) MW-motivic complexes (see \cite[Chapter 3]{BCDFO}). Looking at the respective hearts, we thus obtain the following commutative diagram
\begin{center}
$\xymatrix{
\HI(k)
\ar@<1ex>[r]^-{\sigma^{\infty}}
\ar@<1ex>[d]^{\tilde{\gamma}^*}
&
\HM(k)
\ar@<1ex>[l]^-{\omega^{\operatorname{\infty}}}
\ar@<1ex>[d]^{\gamma^*}
\\
\HIMW(k)
\ar@<1ex>[r]^-{\sigma^{\operatorname{\infty}}_{\operatorname{MW}}}
\ar@<1ex>[u]^{\tilde{\gamma}_*}
&
\HM^{\operatorname{MW}}(k).
\ar@<1ex>[l]^-{\omega^{\operatorname{\infty}}_{\operatorname{MW}}}
\ar@<1ex>[u]^{\gamma_*}
}$
\end{center}
Finally, we recall the two following well-known theorems in order to motivate Theorem \ref{EssImageTransfers}.
\begin{The}
With the previous notations, the adjunction
\begin{center}

$\xymatrix{
\HM(k)
\ar@<1ex>[r]^-{\gamma^*}
&
\HM^{\operatorname{MW}}(k).
\ar@<1ex>[l]^-{\gamma_*}
}$
\end{center}
is an equivalence of categories.

\end{The}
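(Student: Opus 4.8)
The plan is to prove that the forgetful functor $\gamma_*$ is already an equivalence of categories; since $\gamma^*$ is its left adjoint, and the quasi-inverse of an equivalence is canonically (both-sided) adjoint to it, uniqueness of adjoints then gives $\gamma^* \cong \gamma_*^{-1}$ and upgrades the adjunction $\gamma^* \dashv \gamma_*$ to an equivalence. Recall that an object of $\HM(k)$ is a homotopy module, i.e. a $\ZZ$-graded family $(M_n)_{n\in\ZZ}$ of homotopy sheaves together with contraction isomorphisms $M_n \xrightarrow{\sim} (M_{n+1})_{-1}$, and likewise an object of $\HM^{\operatorname{MW}}(k)$ is such a family valued in $\HIMW(k)$; the functor $\gamma_*$ simply forgets the MW-transfer structure in each degree. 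It therefore suffices to show that $\gamma_*$ is essentially surjective, full and faithful.

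For essential surjectivity I would start from an arbitrary homotopy module $(M_n)\in \HM(k)$ and observe that each term is a \emph{two-fold} contraction, $M_n \simeq (M_{n+2})_{-2}$, since the tower of contraction isomorphisms extends in both directions. By Morel's theorem that the Bass-Tate transfers on a two-fold contraction are independent of the chosen generating system (the unconditional case underlying Conjecture \ref{ConjectureMoinsUn}), each $M_n$ acquires a canonical structure of generalized transfers in the sense of Definition \ref{DefGeneralizedTransfers}; crucially, no instance of the Morel conjecture is needed here, precisely because in the stable heart every sheaf is a double contraction. Applying Theorem \ref{EquivalenceHIMWandHIgtr} through the functor $\tilde{\Gamma}_*$ then endows each $M_n$ with a canonical MW-transfer structure. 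What remains is to check that these structures are compatible with the contraction isomorphisms $M_n \simeq (M_{n+1})_{-1}$, so that they assemble into a single object of $\HM^{\operatorname{MW}}(k)$; this is exactly the content of Lemma \ref{LemHIMW} together with Theorem \ref{UnicityTransfers}, which identify the generalized transfer on a contraction with the Bass-Tate transfer and show, via the cancellation theorem, that it is preserved under a further contraction. Hence $(M_n)$ lifts canonically to $\HM^{\operatorname{MW}}(k)$ with image itself under $\gamma_*$.

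For fullness and faithfulness, faithfulness is immediate since $\gamma_*$ leaves the underlying graded family of sheaves untouched. For fullness, a morphism in $\HM(k)$ is a degreewise map of homotopy sheaves commuting with the contraction isomorphisms; because the MW-transfer structure constructed above is the \emph{canonical} one attached to the two-fold contraction and is functorial in $M$, any such morphism automatically commutes with the induced transfers and so lifts to $\HM^{\operatorname{MW}}(k)$. Therefore $\gamma_*$ is an equivalence, and consequently so is the adjunction $\gamma^* \dashv \gamma_*$. I expect the genuine obstacle to lie entirely in the compatibility step inside essential surjectivity, namely verifying that the degreewise transfers respect the $\Gm$-structure (the contraction isomorphisms) of the homotopy module: this is where the unicity of transfers (Theorem \ref{UnicityTransfers}) and the cancellation theorem invoked in Lemma \ref{LemHIMW} are indispensable, whereas everything else is a formal consequence of the established equivalence $\HIMW(k)\simeq \HIgtr(k)$.
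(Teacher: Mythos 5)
Your argument is correct in outline, but it takes a genuinely different route from the paper, which does not prove this theorem internally at all: the proof given is a one-line citation to \cite[Theorem 5.2]{Fel19}, where the equivalence is obtained by identifying \emph{both} $\HM(k)$ and $\HM^{\operatorname{MW}}(k)$ with the category of Milnor--Witt cycle modules via the (stable) Rost--Schmid formalism. You instead argue directly on the forgetful functor $\gamma_*$: every term of a homotopy module is a two-fold contraction $M_n\simeq (M_{n+2})_{-2}$, so Morel's unconditional functoriality of Bass--Tate transfers on $M_{-2}$ (the case of Conjecture \ref{ConjectureMoinsUn} that is actually a theorem) equips each $M_n$ with generalized, hence MW-, transfers via Theorem \ref{EquivalenceHIMWandHIgtr}; compatibility with the bonding isomorphisms and fullness then both reduce to the naturality of the Bass--Tate construction together with the unicity statement of Theorem \ref{UnicityTransfers} and Lemma \ref{LemHIMW}. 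This is essentially the same mechanism the paper uses for the \emph{effective} statements (compare the proof of $(iii)\Rightarrow(iv)$ in Theorem \ref{EssImageTransfers}), promoted degreewise to the stable heart, and it has the virtue of being self-contained relative to this paper. What the cited cycle-module route buys in exchange is an explicit functions-on-fields description of the common heart, which is used elsewhere; your direct argument does not provide that. The one place where your write-up stays at the level of an assertion is the claim that the MW-structure on $M_n$ obtained from $(M_{n+2})_{-2}$ agrees with the contraction of the one on $M_{n+1}$ obtained from $(M_{n+3})_{-2}$ -- you correctly identify this as the crux and correctly point to Theorem \ref{UnicityTransfers} (both structures restrict to the Bass--Tate transfers on simple extensions, hence agree on all finite extensions by \ref{itm:eR1b}), so I regard this as a compressed but sound step rather than a gap.
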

\begin{proof}
See \cite[Theorem 5.2]{Fel19}.
\end{proof}
\begin{The}
With the previous notations, the functor ${\sigma^{\operatorname{\infty}}_{\operatorname{MW}}}:\HIMW(k)\to \HM^{\operatorname{MW}}(k)$ is fully faithful.
\par In particular, if $M\in \HIMW(k)$ is a sheaf with MW-transfers, then for any natural number $n$, there exists a sheaf with MW-transfers $N\in \HIMW(k)$ such that $M\simeq N_{-n}$.

\end{The}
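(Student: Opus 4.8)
The plan is to reduce both assertions to the Fasel--Ostvaer cancellation theorem, following the pattern already used for the unoriented statement in Lemma \ref{FullyFaithfullHearts}.

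First I would observe that the cancellation theorem \cite[Theorem 4.0.1]{FasOst17} is precisely the assertion that the $\Gm$-suspension functor $\Sigma^{\infty}_{\operatorname{MW}}:\widetilde{\mathbf{DM}}^{\operatorname{eff}}(k)\to \widetilde{\mathbf{DM}}(k)$ is fully faithful; its sheaf-level shadow is the isomorphism $\tilde{\Gamma}^*(M)\simeq (\Gm\otimes_{\HIMW}\tilde{\Gamma}^*(M))_{-1}$ recorded in Lemma \ref{LemHIMW}. This is the single substantial ingredient, and it is the step I expect to be the main obstacle were one to argue from scratch; here it is imported as a black box.

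Next I would transport the argument of Lemma \ref{FullyFaithfullHearts} verbatim to the MW-motivic setting. Since $\Omega^{\infty}_{\operatorname{MW}}$ is t-exact for Morel's homotopy t-structure, the functor $\omega^{\infty}_{\operatorname{MW}}$ is the restriction of $\Omega^{\infty}_{\operatorname{MW}}$ to the heart, whereas the left adjoint on the hearts is computed as $\sigma^{\infty}_{\operatorname{MW}}=H_0\circ \Sigma^{\infty}_{\operatorname{MW}}$. Consequently the unit $\Id\to \omega^{\infty}_{\operatorname{MW}}\sigma^{\infty}_{\operatorname{MW}}$ is obtained by applying $H_0$ to the derived unit $\Id\to \Omega^{\infty}_{\operatorname{MW}}\Sigma^{\infty}_{\operatorname{MW}}$. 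The latter is an isomorphism by the cancellation theorem, hence so is its $H_0$, and therefore $\sigma^{\infty}_{\operatorname{MW}}$ is fully faithful.

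Finally, for the concrete consequence, fix $M\in\HIMW(k)$ and $n\in\NNN$. Full faithfulness supplies a canonical isomorphism $M\simeq \omega^{\infty}_{\operatorname{MW}}\sigma^{\infty}_{\operatorname{MW}}(M)$. The object $\sigma^{\infty}_{\operatorname{MW}}(M)\in \HM^{\operatorname{MW}}(k)$ is an MW-homotopy module, that is a family $(M_j)_{j\in\ZZ}$ of MW-homotopy sheaves equipped with contraction isomorphisms $M_j\simeq (M_{j+1})_{-1}$, and $\omega^{\infty}_{\operatorname{MW}}$ extracts its weight-zero term, so that $M\simeq M_0$. Iterating the contraction isomorphisms $n$ times gives $M\simeq M_0\simeq (M_n)_{-n}$; setting $N:=M_n\in\HIMW(k)$ then yields $M\simeq N_{-n}$, as desired.
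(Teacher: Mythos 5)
Your proof is correct and follows the same route as the paper, which simply invokes Lemma \ref{FullyFaithfullHearts} together with the MW-cancellation theorem (cited there as \cite[Chapter 3, Cor.\ 3.3.9]{BCDFO}, the motivic-complex form of \cite[Theorem 4.0.1]{FasOst17}). Your explicit unwinding of the ``in particular'' clause via the contraction isomorphisms of the homotopy module $\sigma^{\operatorname{\infty}}_{\operatorname{MW}}(M)$ is exactly the intended argument, which the paper leaves implicit.
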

\begin{proof}
Use Lemma \ref{FullyFaithfullHearts} and \cite[Chapter 3, Cor. 3.3.9]{BCDFO}.
\end{proof}

The previous theorems hint that similar results should hold for the functor ${\tilde{\gamma}_*:\HIMW(k)\to \HI(k)}$ that forgets MW-transfers. This functor is clearly faithful and conservative but cannot be full according to the following counterexample due to Bachmann: 
\par Consider the constant sheaf $\ZZ$, which admits MW-transfers. For any MW-homotopy sheaf $F$, the set of maps $\Hom_{\HIMW}(\ZZ,F)$ injects into the subset of $F(k)$ given by the annihilator of the fundamental ideal $\mathbf{I}$ of $\GW(k)$ acting on $F(k)$ (since $\ZZ = \GW/\mathbf{I}$ and maps with MW-transfers from $\GW$ to $F$ are given by $F(k)$). On the other hand, the set of maps $\Hom_{\HI}(\ZZ,F)$ is given by all of $F(k)$.
\par However, we can still characterize its essential image thanks to the following Theorem.

\begin{The} \label{EssImageTransfers}
Let $M\in \HI(k)$ be a homotopy sheaf. The following assertions are equivalent:
\begin{enumerate}
\item[(i)]  There exists $M'\in \HI(k)$ satisfying Conjecture
 \ref{ConjectureMoinsUn} and such that $M\simeq M'_{-1}$.
\item[(ii)] There exists a structure of generalized transfers on $M$.
\item[(iii)] There exists a structure of MW-transfers on $M$.
\item[(iv)] There exists $M''\in \HI(k)$ such that $M\simeq M''_{-2}$. 
\end{enumerate}
\end{The}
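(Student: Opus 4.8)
The plan is to prove that the four conditions are equivalent by establishing the cycle of implications $(i) \Rightarrow (ii) \Rightarrow (iii) \Rightarrow (iv) \Rightarrow (i)$, so that each step reduces to invoking a single result from the preceding sections together with the compatibility of contraction with the relevant forgetful functors. First I would prove $(i) \Rightarrow (ii)$: if $M \simeq M'_{-1}$ for some $M' \in \HI(k)$ satisfying Conjecture \ref{ConjectureMoinsUn}, then Theorem \ref{HIMoinsUntogtr}(2) says precisely that the contracted sheaf $M'_{-1}$ carries a structure of generalized transfers, and transporting along the isomorphism $M \simeq M'_{-1}$ equips $M$ with one. For $(ii) \Rightarrow (iii)$, I would apply the functor $\tilde{\Gamma}_*$ of Subsection \ref{SubsectionMWTransfersStructure}: by Theorem \ref{MWModuleFM}, if $M \in \HIgtr(k)$ then $\tilde{\Gamma}_*(M)$ is an MW-homotopy sheaf canonically isomorphic to $M$ as presheaves, so the underlying sheaf $M$ inherits a structure of MW-transfers.

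Next, for $(iii) \Rightarrow (iv)$, I would use the theorem recalled just above (relying on \cite[Chapter 3, Cor. 3.3.9]{BCDFO}), which for $n=2$ produces $N \in \HIMW(k)$ with $M \simeq N_{-2}$ in $\HIMW(k)$. Forgetting MW-transfers via $\tilde{\gamma}_*$ and setting $M'' = \tilde{\gamma}_*(N) \in \HI(k)$, one obtains $M \simeq M''_{-2}$ as homotopy sheaves; here I would remark that the forgetful functor commutes with contraction, since $(-)_{-1} = \Hominter(\Gm, -)$ is computed on the underlying sheaf and does not see the transfer structure. Finally, for $(iv) \Rightarrow (i)$, given $M \simeq M''_{-2}$ I would set $M' = M''_{-1}$, which is again a homotopy sheaf by \cite[Lemma 2.32]{Mor12}. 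Then $M'_{-1} = M''_{-2} \simeq M$, and the Bass-Tate transfers on $M'_{-1} = (M'')_{-2}$ are functorial because Morel proved Conjecture \ref{ConjectureMoinsUn} unconditionally for any two-fold contraction (see the discussion in the introduction and \cite[Remark 4.31]{Mor12}). Hence $M'$ satisfies the conjecture and $M \simeq M'_{-1}$, closing the cycle.

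The routine parts are the transport-of-structure arguments along the various canonical isomorphisms; the conceptual content is concentrated in two places. The passage between $(ii)$ and $(iii)$ is where the full force of the equivalence $\HIMW(k) \simeq \HIgtr(k)$ from Theorem \ref{EquivalenceHIMWandHIgtr} is used, even though the cycle only needs one direction at a time. I expect the main obstacle to be the step $(iv) \Rightarrow (i)$: it is essential to invoke Morel's \emph{unconditional} functoriality for two-fold contractions rather than Conjecture \ref{ConjectureMoinsUn} itself, since otherwise the argument would become circular (the conjecture is exactly what $(i)$ asserts for $M'$). The delicate bookkeeping will be to confirm that the several notions of contraction — in $\HI(k)$, in $\HIMW(k)$, and after applying $\tilde{\gamma}_*$ — all agree up to canonical isomorphism, which is precisely what Lemma \ref{LemHIMW} and the cancellation theorem \cite[Chapter 4]{BCDFO} guarantee.
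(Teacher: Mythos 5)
Your proof follows the same cycle $(i)\Rightarrow(ii)\Rightarrow(iii)\Rightarrow(iv)\Rightarrow(i)$ as the paper, invoking the same key inputs at each step: Theorem \ref{HIMoinsUntogtr} for the first implication, the equivalence $\HIgtr(k)\simeq\HIMW(k)$ for the second, full faithfulness of $\sigma^{\infty}_{\operatorname{MW}}$ (via \cite[Chapter 3, Cor.\ 3.3.9]{BCDFO}) for the third, and Morel's unconditional functoriality of transfers on two-fold contractions for the last (which the paper dismisses as ``straightforward''). The argument is correct and essentially identical; your added care about the forgetful functor commuting with contraction and the non-circularity of $(iv)\Rightarrow(i)$ only makes explicit what the paper leaves implicit.
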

\begin{proof}
\par $(i)\Rightarrow (ii)$ See Theorem \ref{HIMoinsUntogtr}.
\par $(ii)\Rightarrow (iii)$ See Theorem \ref{EquivalenceHIMWandHIgtr}.
\par $(iii)\Rightarrow (iv)$ Assume $M\in \HIMW(k)$. Put $M_*=\sigma^{\infty}_{\operatorname{MW}}(M)\in \HM^{\operatorname{MW}}(k)$ so that we have $\omega^{\infty}_{\operatorname{MW}}(M_*)=M_0\simeq (M_2)_{-2}$. Since $\sigma^{\infty}_{\operatorname{MW}}$ is fully faithful, the map $M\to \omega^{\infty}_{\operatorname{MW}} \sigma^{\infty}_{\operatorname{MW}}(M)$ is an isomorphism thus $M\simeq M''_{-2}$ with $M''=M_2$.
\par $(iv)\Rightarrow (i)$ Straightforward.
\end{proof}

\begin{Rem}
Keeping the previous notations, we remark that the equivalence $(i) \Leftrightarrow (ii)$ was conjectured by Morel in \cite[Remark 5.10]{Mor11}.
\end{Rem}

\subsection{Towards conservativity of $\Gm$-stabilization}
\label{SubsectionTowardsConservativity}
We end with a discussion about a conjecture introduced in \cite{BachmannYakerson18}. 
In the classical theory of topological spaces, the functor $\operatorname{Spc}_*\to \mathfrak{D}(\Ab)$, sending a space to its singular chain complex, is conservative on (at least) simply connected spaces. We would like to study a similar question in the motivic context: up to which extent is the functor
\begin{center}
$\Sigma^{\infty}_{\Gm}:\SHS(k) \to \SH(k)$
\end{center}
 conservative ? The conjecture of Bachmann and Yakerson relied on the hope that this is true after $\Gm$-suspension. Precisely, for any natural number $n$, denote by $\SHS(k)(n)$ the localizing subcategory of $\SHS(k)$ generated by $\Sigma^n_{\Gm}\Sigma^{\infty}_{S^1}X_+$ where $X\in \Sm$. The fact that the functor $\Sigma^{\infty}_{\Gm}:\SHS(k)(1) \to \SH(k)$ is conservative on bounded below objects reduces to proving the following statement (see \cite[Conjecture 1.1]{BachmannYakerson18}). 

\begin{Conj}[Bachmann-Yakerson]
If $d\geq 1$ is a natural number, then the canonical functor
\begin{center}
$\Sigma^{\infty -d\heartsuit}_{\Gm}:\SHS(k)(d)^{\heartsuit} \to \SH(k)^{\txt{eff}\heartsuit}$
\end{center}
is an equivalence of abelian categories.
\end{Conj}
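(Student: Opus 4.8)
The plan is to match both hearts with the categories of homotopy sheaves carrying transfer structure studied above, and then to run the comparison through the equivalences of Theorem~\ref{EquivalenceHIMWandHIgtr} and Corollary~\ref{EquivalenceHIMWandHIfr}. On the target side, the reconstruction theorem for framed transfers identifies $\SH(k)^{\txt{eff}\heartsuit}$ with $\HIfr(k)$. On the source side, $t$-exactness of $\Omega^\infty_{S^1}$ identifies $\SHS(k)^\heartsuit$ with $\HI(k)$, and the same reconstruction identifies the heart $\SHS(k)(d)^\heartsuit$ of the localizing subcategory generated by the $d$-fold $\Gm$-suspensions with the homotopy sheaves admitting a $d$-fold $\Gm$-delooping, that is, the $d$-fold contractions $M\simeq N_{-d}$, with morphisms the maps compatible with the induced framed transfers rather than all maps of homotopy sheaves (consistently with $\tilde{\gamma}_*$ being faithful but not full). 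Under these identifications $\Sigma^{\infty-d\heartsuit}_{\Gm}$ is the functor sending such an $M$ to its canonical framed sheaf, and the conjecture asserts this is an equivalence onto $\HIfr(k)$ for every $d\ge 1$.

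First I would show that the source stabilizes from level two on. A $d$-fold contraction with $d\ge 2$ is in particular a double contraction, so by Theorem~\ref{EssImageTransfers} (the implication $(iv)\Rightarrow(iii)$) its underlying homotopy sheaf carries a structure of MW-transfers. Conversely, every MW-homotopy sheaf is infinitely contractible: Lemma~\ref{LemHIMW} gives $M\simeq(\Gm\otimes_{\HIMW}M)_{-1}$, and iterating exhibits $M$ as a $d$-fold contraction $M\simeq(\Gm^{\otimes d}\otimes_{\HIMW}M)_{-d}$ for every $d$. Hence the homotopy sheaves underlying $\SHS(k)(d)^\heartsuit$, for all $d\ge 2$, are exactly the objects of $\HIMW(k)$.

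For $d\ge 2$ I would then conclude as follows. By Corollary~\ref{EquivalenceHIMWandHIfr} one has $\HIMW(k)\simeq\HIgtr(k)\simeq\HIfr(k)$; combined with the two identifications of the first paragraph, this exhibits $\Sigma^{\infty-d\heartsuit}_{\Gm}$ as the composite of these equivalences, hence an equivalence itself. The only point to check is that the transfer structure attached to an object of $\SHS(k)(d)^\heartsuit$ is the \emph{canonical} one, so that the comparison is the identity on the MW side; this is guaranteed by Theorem~\ref{UnicityTransfers}, which shows the generalized transfers on a contraction coincide with the Bass-Tate transfers of Definition~\ref{BassTateTransfersDefinition} and are thus intrinsic to the homotopy sheaf. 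This proves the conjecture integrally for all $d\ge 2$ and recovers the case $d=2$ of Corollary~\ref{RationalBYconjecture}.

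The hard part will be $d=1$. Here the source consists of single contractions, and the argument above makes the equivalence equivalent to the assertion that \emph{every} single contraction $M_{-1}$ is already a double contraction; by Theorem~\ref{EssImageTransfers} this is in turn equivalent to $M_{-1}$ admitting a structure of generalized transfers, equivalently to the Bass-Tate transfers $\tr_{x_1,\dots,x_r/E}$ attached to $M$ being independent of the chosen generating system. That last condition is precisely Morel's Conjecture~\ref{ConjectureMoinsUn}, so the integral case $d=1$ of the Bachmann-Yakerson conjecture is equivalent to it. Rationally the obstruction disappears, since by Theorem~\ref{HIMoinsUntogtr}(1) the $\pm$-decomposition of $M_{-1,\QQ}$ furnishes functorial transfers unconditionally, which is exactly why the rational $d=1$ statement of Corollary~\ref{RationalBYconjecture} is available. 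Proving the integral functoriality of the Bass-Tate transfers, Conjecture~\ref{ConjectureMoinsUn}, is therefore the central remaining obstacle.
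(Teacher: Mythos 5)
Note first that this statement appears in the paper as a \emph{conjecture}: the paper does not prove it, and the integral case $d=1$ remains open. What the paper does establish are the partial cases recorded in Theorem \ref{BachThm4.5} and Corollary \ref{RationalBYconjecture}: the equivalence holds unconditionally for $d>1$, for $d=1$ under Conjecture \ref{ConjectureMoinsUn}, and rationally for $d=1$. Your proposal is honest on exactly this point --- you prove $d\ge 2$, reduce the integral $d=1$ case to Conjecture \ref{ConjectureMoinsUn}, and invoke Theorem \ref{HIMoinsUntogtr} for the rational case --- so the scope of what you establish matches the paper precisely, and the route (a $d$-fold contraction acquires generalized transfers, hence MW-transfers via Theorem \ref{EquivalenceHIMWandHIgtr}, hence framed transfers, and then $\HIgtr(k)\simeq\HIMW(k)\simeq\HIfr(k)$ by Corollary \ref{EquivalenceHIMWandHIfr}, with Theorem \ref{UnicityTransfers} guaranteeing the transfers are intrinsic) is the same as the paper's.

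The one substantive gap relative to the paper concerns the identification of the source heart. You assert that $\SHS(k)(d)^{\heartsuit}$ is identified ``by the same reconstruction'' with the category of $d$-fold contractions equipped with transfer-compatible morphisms. That identification is not established anywhere in the paper and is not formal: \cite[Theorem 5.14]{BachmannYakerson18} handles the target $\SH(k)^{\mathrm{eff}\heartsuit}\simeq\HIfr(k)$, but the description of the heart of the localizing subcategory $\SHS(k)(d)$ --- in particular the claim that its morphisms are exactly the transfer-preserving ones, which is what makes your comparison with $\HIMW(k)$ an equivalence rather than merely essentially surjective --- is precisely the content the paper outsources to \cite[Theorem 4.5]{Bach20} in the proof of Theorem \ref{BachThm4.5}. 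If you want a self-contained argument you must supply that step; otherwise you should cite Bachmann's theorem at that point, as the paper does. With that citation added, your argument for $d\ge 2$ and your reduction of $d=1$ to Morel's conjecture are correct and coincide with the paper's treatment.
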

Recall that $\SH(k)^{\txt{eff}}$ denotes the localizing subcategory generated by the image of $\SHS(k)$ in $\SH(k)$ under $\Sigma^{\infty}_{\Gm}$ and the hearts are taken with respect to homotopy t-structures on these categories. As a reformulation of the conjecture, we remark that the category $\SHS(k)^{\heartsuit}$ is equivalent to the category of homotopy sheaves $\HI(k)$ and that the category $\SH(k)^{\txt{eff}\heartsuit}$ is equivalent to the category $\HIfr(k)$ of homotopy sheaves with {\em framed transfers} (see \cite[Theorem 5.14]{BachmannYakerson18}). In \cite{Bach20}, Bachmann proved the following theorem.
\begin{The} \label{BachThm4.5}
Let $d>0$ be a natural number. If $d>1$ or Conjecture \ref{ConjectureMoinsUn} holds, then for any homotopy sheaf $M\in \HI(k)$, the Bass-Tate transfers on $M_{-d}$ extend to framed transfers and the canonical functor
\begin{center}
$\Sigma^{\infty -d\heartsuit}_{\Gm}:\SHS(k)(d)^{\heartsuit} \to \SH(k)^{\txt{eff}\heartsuit}$
\end{center}
is an equivalence of abelian categories.
\end{The}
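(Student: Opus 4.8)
The plan is to treat the two assertions separately, reducing both to the structure theory assembled above together with the recognition of $\SH(k)^{\txt{eff}\heartsuit}$ as the category $\HIfr(k)$ of homotopy sheaves with framed transfers (see \cite[Theorem 5.14]{BachmannYakerson18}). Under the homotopy $t$-structure one has $\SHS(k)^{\heartsuit}\simeq \HI(k)$, and after these identifications the functor $\Sigma^{\infty -d\heartsuit}_{\Gm}$ becomes the assignment sending a homotopy sheaf $N$ to the $d$-fold contraction $N_{-d}$ endowed with whatever transfer structure it carries; in particular $\SHS(k)(d)^{\heartsuit}$ is identified with the full subcategory of $\HI(k)$ of $d$-fold contracted sheaves. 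Thus the substance of the first assertion — that the Bass--Tate transfers on $M_{-d}$ assemble into framed transfers — is the heart of the matter.

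For the extension claim I would argue as follows. Since homotopy sheaves are determined by their field-level data (Theorem \ref{EquivalenceHomotopySheaves}), a framed transfer structure is detected on sections over finitely generated extensions, where a framed correspondence of relative dimension zero between spectra of fields induces exactly a trace map $M_{-d}(F,\LLL_{F/k})\to M_{-d}(E,\LLL_{E/k})$. The coherences such a system of traces must satisfy — functoriality in towers of finite extensions and the base-change/projection identity attached to a tensor product $F\otimes_E L$ — are precisely the axioms \ref{itm:eR1b} and \ref{itm:eR1c} for the Bass--Tate transfers. Hence it suffices to verify these for $M_{-d}$. For $d\geq 2$ one writes $M_{-d}=(M_{-(d-2)})_{-2}$ and invokes Morel's theorem that the geometric transfers on a two-fold contraction are independent of the chosen generators \cite{Mor12}, giving \ref{itm:eR1b}; the base-change formula \ref{itm:eR1c} is Theorem \ref{R1c_fort}. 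For $d=1$ the only missing input is functoriality \ref{itm:eR1b}, which is exactly Conjecture \ref{ConjectureMoinsUn}. In either case $M_{-d}\in \HIgtr(k)$, so by Theorem \ref{EquivalenceHIMWandHIgtr} the same data equivalently package as MW-transfers; what remains is to promote these field-level traces to a genuine framed transfer structure, which follows from the reconstruction of framed transfers out of trace data carried out by Bachmann in \cite{Bach20}.

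For the equivalence of abelian categories I would check the three conditions on $\Sigma^{\infty -d\heartsuit}_{\Gm}$. Faithfulness and conservativity are formal, since the forgetful functor $\HIfr(k)\to \HI(k)$ is faithful and conservative and our functor is a lift of a contraction along it. Full faithfulness I would deduce from a cancellation theorem for $\Gm$-suspension on the relevant subcategory (cf. the use of \cite[Theorem 4.0.1]{FasOst17} in Lemma \ref{LemHIMW}), which makes $\Sigma^{d}_{\Gm}$ fully faithful after passing to hearts. Essential surjectivity amounts to the statement that every $F\in \HIfr(k)$ is a $d$-fold contraction: forgetting to $\HI(k)$ and using that framed homotopy sheaves are infinitely $\Gm$-deloopable, one writes $F\simeq N_{-d}$ for some $N\in \HI(k)$, after which the transfer structure on $F$ is recovered from the Bass--Tate data by the previous paragraph.

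The main obstacle is the extension step at $d=1$: there the functoriality of the single Bass--Tate transfer is genuinely unavailable in general and must be postulated as Conjecture \ref{ConjectureMoinsUn}, whereas for $d\geq 2$ Morel's two-fold-contraction result supplies it unconditionally. The remaining technical weight lies in matching the coherences of framed correspondences between fields with Morel's axioms \ref{itm:eR1b}--\ref{itm:eR3b} on the contraction, i.e. in the reconstruction of a framed transfer structure from the trace data; this is precisely the content that Bachmann develops in \cite{Bach20} via the theory of framed correspondences, and it is what allows one to conclude both the extension and, with cancellation, the equivalence.
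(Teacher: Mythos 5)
The first half of your argument --- producing the framed transfers --- is essentially the paper's route: for $d\geq 2$ one uses Morel's theorem that the geometric transfers on a two-fold contraction are independent of the generators, for $d=1$ one invokes Conjecture \ref{ConjectureMoinsUn}, one concludes $M_{-d}\in\HIgtr(k)$ (via Theorem \ref{HIMoinsUntogtr} and Theorem \ref{R1c_fort}), passes to MW-transfers by Theorem \ref{EquivalenceHIMWandHIgtr}, and then to framed transfers. Two small remarks: a structure of generalized transfers requires more than \ref{itm:eR1b} and \ref{itm:eR1c} --- you also need the projection formulas \ref{itm:eR2b}, \ref{itm:eR2c} and the residue compatibility \ref{itm:eR3b}, though these are supplied unconditionally by Proposition \ref{ContractionProjectionFormulas} and Morel --- and the passage from MW-transfers to framed transfers is most directly obtained by restricting along the functor from framed correspondences to $\Cortilde_k$ (\cite[Chapter 3, \S 2]{BCDFO}) rather than by a reconstruction from field-level trace data.

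The genuine gap is in your treatment of the second assertion. The implication ``the Bass--Tate transfers on $M_{-d}$ extend to framed transfers for all $M$'' $\Rightarrow$ ``$\Sigma^{\infty-d\heartsuit}_{\Gm}$ is an equivalence'' is precisely \cite[Theorem 4.5]{Bach20}, a substantive theorem that the paper simply cites; your three-condition check does not reproduce it. Concretely, your full-faithfulness step appeals to ``a cancellation theorem for $\Gm$-suspension on the relevant subcategory,'' but the cancellation theorem you point to (\cite[Theorem 4.0.1]{FasOst17}, used in Lemma \ref{LemHIMW}) lives in $\widetilde{\mathbf{DM}}^{\operatorname{eff}}(k)$, i.e. already presupposes MW- or framed transfers; cancellation for $\Gm$-suspension at the level of $\SHS(k)(d)$ is not available as an off-the-shelf input and is close to being the content of the conjecture itself. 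Similarly, your essential-surjectivity argument produces $N\in\HI(k)$ with $N_{-d}\simeq F$, but you still must show that the resulting object lies in $\SHS(k)(d)^{\heartsuit}$ and that the framed structure on $F$ agrees with the one induced from $N$, which is exactly where Bachmann's argument does its work. Either cite \cite[Theorem 4.5]{Bach20} for this step, as the paper does, or supply the actual comparison; as written, this part of the proposal is circular.
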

\begin{proof}
Let $M\in \HI(k)$ be a homotopy sheaf. If $d>1$ (resp. if Conjecture \ref{ConjectureMoinsUn} holds), then $M_{-d}$ (resp. $M_{-1}$) has a structure of generalized transfers hence of Milnor-Witt transfers according to Section \ref{SectionMWHomotopySheaves}. Thus it has a structure of framed transfers (see \cite[Chapter 3, §2]{BCDFO}) and the first result holds. The second one is \cite[Theorem 4.5]{Bach20}.
\end{proof}

As an application of our theorem \ref{HIMoinsUntogtr}, we obtain:
\begin{Cor}\label{RationalBYconjecture} Let $d>0$ be a natural number. The Bachmann-Yakerson conjecture holds (integrally) for $d=2$ and rationally for $d=1$: namely, the canonical functor
 \begin{center}
$\SHS(k)(2) \to \SH(k)$
\end{center}
is conservative on bounded below objects, the canonical functor
 \begin{center}
$\SHS(k)(1) \to \SH(k)$
\end{center}
is conservative on rational bounded below objects, and the canonical functor
\begin{center}
$\HI(k,\QQ)(1)\to \HIfr(k,\QQ)$
\end{center}
is an equivalence of abelian categories.
\par Moreover, let $\mathcal{X}$ be a pointed motivic space. Then the canonical map
\begin{center}
$\underline{\pi}_0\Omega^{d}_{\PP^1}\Sigma^{d}_{\PP^1}\mathcal{X}\to
\underline{\pi}_0\Omega^{d+1}_{\PP^1}\Sigma^{d+1}_{\PP^1}\mathcal{X}$
\end{center}
is an isomorphism for $d=2$.
\end{Cor}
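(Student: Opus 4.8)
The plan is to deduce all four assertions from Bachmann's Theorem \ref{BachThm4.5}, feeding in the rational generalized transfers of Theorem \ref{HIMoinsUntogtr} wherever the integral Morel conjecture is unavailable, and then to translate the resulting equivalences of hearts into conservativity and stabilization statements via the homotopy t-structure. First I would settle the integral case $d=2$: since $2>1$, the hypothesis of Theorem \ref{BachThm4.5} holds unconditionally (Conjecture \ref{ConjectureMoinsUn} is not needed), so $\Sigma^{\infty-2\heartsuit}_{\Gm}:\SHS(k)(2)^{\heartsuit}\to \SH(k)^{\txt{eff}\heartsuit}$ is an equivalence of abelian categories. To pass from this equivalence of hearts to conservativity of $\Sigma^{\infty}_{\Gm}:\SHS(k)(2)\to \SH(k)$ on bounded below objects, I would use that $\Sigma^{\infty}_{\Gm}$ is t-exact for the homotopy t-structures, hence commutes with the truncation functors $\underline{\pi}_n$. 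If $X$ is bounded below with $\Sigma^{\infty}_{\Gm}X\simeq 0$, then $\Sigma^{\infty-2\heartsuit}_{\Gm}\underline{\pi}_n X\simeq 0$ for every $n$; the equivalence of hearts forces $\underline{\pi}_n X=0$ for all $n$, and convergence of the homotopy t-structure on bounded below objects yields $X\simeq 0$.

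Next I would treat the rational case $d=1$. Integrally, Theorem \ref{BachThm4.5} at $d=1$ is conditional on Conjecture \ref{ConjectureMoinsUn}; rationally, however, Theorem \ref{HIMoinsUntogtr}(1) already supplies a structure of generalized transfers on $M_{-1,\QQ}$ for every $M\in \HI(k)$ (recall $2$ is invertible). Through Theorem \ref{EquivalenceHIMWandHIgtr} this upgrades to a structure of MW-transfers, and then, exactly as in the proof of Theorem \ref{BachThm4.5}, to a structure of framed transfers via \cite[Chapter 3]{BCDFO}. Running Bachmann's argument with this rational input in place of the conjecture yields the equivalence of abelian categories $\HI(k,\QQ)(1)\to \HIfr(k,\QQ)$, which is the third assertion; the conservativity of $\SHS(k)(1)\to \SH(k)$ on rational bounded below objects then follows from this rational equivalence of hearts by the same t-structure argument as above, applied after rationalization.

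Finally, for the stabilization statement at $d=2$ I would read off the isomorphism directly from the equivalence of hearts. The sheaf $\underline{\pi}_0\Omega^{d}_{\PP^1}\Sigma^{d}_{\PP^1}\mathcal{X}$ is the $\underline{\pi}_0$ of the $d$-th stage of the $\Gm$-stabilization tower of $\mathcal{X}$, and the canonical map to stage $d+1$ is its image under one further $\Gm$-suspension; under the dictionary of \cite[Theorem 5.14]{BachmannYakerson18} the colimit of this tower computes $\underline{\pi}_0$ of the associated object of $\SH(k)^{\txt{eff}\heartsuit}\simeq \HIfr(k)$. Once the equivalence of hearts holds at level $2$, every transition map of the tower from stage $2$ onward is an isomorphism on $\underline{\pi}_0$, and in particular the map for $d=2$ is an isomorphism.

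The main obstacle I anticipate is the rational step for $d=1$: one must verify that the generalized transfers produced by Theorem \ref{HIMoinsUntogtr}(1) are precisely the transfers that Bachmann's proof of Theorem \ref{BachThm4.5} feeds into the framed-transfer comparison, so that his argument applies verbatim after rationalization without tacitly invoking the integral Morel conjecture. The only other delicate point is the convergence of the homotopy t-structure needed to deduce $X\simeq 0$ from the vanishing of all its homotopy sheaves, which is exactly where the bounded below hypotheses enter.
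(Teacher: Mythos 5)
Your proposal is correct and follows essentially the same route as the paper, which simply invokes Theorem \ref{BachThm4.5} (unconditional for $d=2$, fed with the rational generalized transfers of Theorem \ref{HIMoinsUntogtr} for $d=1$) and defers the passage from the equivalence of hearts to conservativity and the stabilization statement to the proof of \cite[Theorem 1.1]{Bach20}. Your write-up merely makes explicit the t-exactness and convergence arguments that the paper leaves to that citation, and the compatibility of transfers you flag as a potential obstacle is handled by Theorem \ref{UnicityTransfers}.
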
 
\begin{proof}
The last result follows as in the proof of \cite[Theorem 1.1]{Bach20}.
\end{proof}Moreover, we can solve (integrally) a question left open after the work of \cite{BCDFO} and \cite{GarkushaPanin14}:
\begin{Cor} \label{EquivalenceHIMWandHIfr}
The category of homotopy sheaves with generalized transfers, the category of MW-homotopy sheaves and the category of homotopy sheaves with framed transfers are equivalent:
\begin{center}
$\HIgtr(k)\simeq \HIMW(k) \simeq \HIfr(k)$.
\end{center}
\end{Cor}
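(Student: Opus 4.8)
The plan is to obtain the two equivalences separately and then chain them. The first, $\HIgtr(k)\simeq \HIMW(k)$, is nothing but Theorem \ref{EquivalenceHIMWandHIgtr}, so no further work is required there; everything reduces to establishing the second equivalence $\HIMW(k)\simeq \HIfr(k)$. The whole argument can be run \emph{integrally} precisely because we are allowed to take $d=2$, which is $>1$, so that Conjecture \ref{ConjectureMoinsUn} is never invoked.

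Next I would feed this into Bachmann's theorem. Applying Theorem \ref{BachThm4.5} with $d=2$ gives an equivalence of abelian categories $\Sigma^{\infty-2\heartsuit}_{\Gm}\colon \SHS(k)(2)^{\heartsuit}\xrightarrow{\sim}\SH(k)^{\txt{eff}\heartsuit}$, and by \cite[Theorem 5.14]{BachmannYakerson18} the target is precisely $\HIfr(k)$; hence $\SHS(k)(2)^{\heartsuit}\simeq \HIfr(k)$. It then remains to identify $\SHS(k)(2)^{\heartsuit}$ with $\HIMW(k)$. For this I would use the essential-image characterization of Theorem \ref{EssImageTransfers}: the underlying homotopy sheaf of an object of $\SHS(k)(2)^{\heartsuit}$ is a double contraction $M''_{-2}$ (condition (iv)), and by the equivalence (iii)$\Leftrightarrow$(iv) these are exactly the homotopy sheaves carrying a structure of MW-transfers. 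Moreover the framed transfers produced by Theorem \ref{BachThm4.5} are, by the construction used in its proof, induced from the Bass--Tate (equivalently generalized, equivalently MW) transfers; so the equivalence $\SHS(k)(2)^{\heartsuit}\simeq \HIfr(k)$ is compatible with the passage to MW-transfers and factors through $\HIMW(k)$.

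The hard part will be to upgrade this matching on objects to a genuine equivalence of categories, i.e. to check that the comparison functor $\HIMW(k)\to \HIfr(k)$ is fully faithful and not merely essentially surjective. Essential surjectivity is exactly the object-level statement above. For full faithfulness I would argue that both transfer structures are generated by the finite pushforward maps $f_*$ --- for MW-transfers these are the maps attached to the correspondences $\alpha(f,\LL,\psi)$ of \ref{MWPushforward}, and the framed transfers restrict to the same finite transfers --- so that a natural transformation of the underlying homotopy sheaves is compatible with the one structure if and only if it is compatible with the other. Reducing the compatibility of an arbitrary morphism to compatibility with these generating finite transfers is where the real care is needed; once this is done, the three categories are identified and the corollary follows.
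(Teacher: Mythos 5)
Your treatment of the first equivalence $\HIgtr(k)\simeq\HIMW(k)$ matches the paper exactly (both just invoke Theorem \ref{EquivalenceHIMWandHIgtr}), but your route to the second equivalence has a genuine gap. The paper does not derive $\HIMW(k)\simeq\HIfr(k)$ from Theorem \ref{BachThm4.5} and Theorem \ref{EssImageTransfers}; it cites an external result of Bachmann (\cite[Proposition 29]{Bach18bis}, which identifies $\SH(k)^{\txt{eff}\heartsuit}$ with $\HIMW(k)$ as categories) and combines it with \cite[Theorem 5.14]{BachmannYakerson18}. Your substitute for that input is the identification of $\SHS(k)(2)^{\heartsuit}$ with $\HIMW(k)$, and the only tool you bring to bear on it, Theorem \ref{EssImageTransfers}, is purely an \emph{object-level} statement: it characterizes which homotopy sheaves admit some MW-transfer structure, i.e. the essential image of the forgetful functor $\tilde{\gamma}_*:\HIMW(k)\to\HI(k)$. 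It says nothing about morphisms, and this is exactly where the difficulty sits: the paper itself records Bachmann's counterexample (the constant sheaf $\ZZ$) showing that $\tilde{\gamma}_*$ is faithful but \emph{not full}, so a category of "sheaves with extra transfers" is not determined by which underlying sheaves occur. Matching objects therefore does not produce an equivalence of categories.

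You do flag this yourself ("upgrade this matching on objects to a genuine equivalence\dots where the real care is needed"), but the sketch you offer --- that both transfer structures are "generated by the finite pushforwards $f_*$" so compatibility with one forces compatibility with the other --- is precisely the nontrivial comparison of MW-transfers with framed transfers, and it is left unproved. In effect you have reduced the corollary to the assertion $\SHS(k)(2)^{\heartsuit}\simeq\HIMW(k)$, which (given Theorem \ref{BachThm4.5} and \cite[Theorem 5.14]{BachmannYakerson18}) is equivalent in content to the statement being proved, so the reduction is circular unless that assertion is established independently. To close the gap you either need the citation the paper uses, or a direct argument that a morphism of underlying homotopy sheaves commuting with the MW-finite-pushforwards commutes with all framed transfers and conversely.
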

\begin{proof}
The first equivalence is our Theorem \ref{EquivalenceHIMWandHIgtr}; the second one is due to Bachmann (combine \cite[Proposition 29]{Bach18bis} and \cite[Theorem 5.14]{BachmannYakerson18}).
\end{proof}

  \bibliographystyle{alpha}
  \bibliography{BiblioFile3}

\newcommand{\etalchar}[1]{$^{#1}$}
\begin{thebibliography}{BCD{\etalchar{+}}20}

\bibitem[Bac18]{Bach18bis}
T~Bachmann.
\newblock Motivic tambara functors.
\newblock arXiv:1807.02981v1, 2018.

\bibitem[Bac20]{Bach20}
T~Bachmann.
\newblock The zeroth \(\mathbb{P}^1\)-stable homotopy sheaf of a motivic space.
\newblock arXiv:2003.12021v1, 2020.

\bibitem[BCD{\etalchar{+}}20]{BCDFO}
T.~Bachmann, B.~{Calmès}, F.~{Déglise}, J.~Fasel, and P.~{Ostvaer}.
\newblock {Milnor-Witt Motives}.
\newblock arXiv:2004.06634v1, 2020.

\bibitem[BD17]{DegBon17}
Mikhail {Bondarko} and Fr\'ed\'eric {D\'eglise}.
\newblock {Dimensional homotopy t-structures in motivic homotopy theory.}
\newblock {\em {Adv. Math.}}, 311:91--189, 2017.

\bibitem[BT73]{BassTate73}
H.~{Bass} and J.~{Tate}.
\newblock {The Milnor ring of a global field.}
\newblock {Algebr. \(K\)-Theory II, Proc. Conf. Battelle Inst. 1972, Lect.
  Notes Math. 342, 349-446 (1973).}, 1973.

\bibitem[BY18]{BachmannYakerson18}
T.~Bachmann and M.~Yakerson.
\newblock Towards conservativity of $\mathbb{G}_m$-stabilisation.
\newblock arXiv:1811.01541, 2018.

\bibitem[CD12]{CD12}
D.-C. {Cisinski} and F.~{Déglise}.
\newblock Triangulated categories of mixed motives.
\newblock arXiv:0912.2110v3, 2012.

\bibitem[{Dé}11]{Deg10}
F.~{Déglise}.
\newblock {Modules homotopiques.}
\newblock {\em {Doc. Math.}}, 16:411--455, 2011.

\bibitem[{Fas}08]{Fasel13}
J.~{Fasel}.
\newblock {Groups de Chow-Witt.}
\newblock {\em {M\'em. Soc. Math. Fr., Nouv. S\'er.}}, 113:1--197, 2008.

\bibitem[{Fel}18]{Fel18}
N.~{Feld}.
\newblock Milnor-{W}itt cycle modules.
\newblock to be published in Journal of Pure and Applied Algebra,
  https://arxiv.org/abs/1811.12163, 2018.

\bibitem[{Fel}19]{Fel19}
N.~{Feld}.
\newblock Morel homotopy modules and {M}ilnor-{W}itt cycle modules.
\newblock arXiv:1912.12680, 2019.

\bibitem[FO17]{FasOst17}
J.~{Fasel} and P.~{Ostvaer}.
\newblock A cancellation theorem for {M}ilnor-{W}itt correspondences.
\newblock arXiv:1708.06098, 2017.

\bibitem[GP14]{GarkushaPanin14}
G.~{Garkusha} and I.~{Panin}.
\newblock Framed motives of algebraic varieties.
\newblock arXiv:1409.4372, 2014.

\bibitem[GS17]{GS17}
Philippe {Gille} and Tam\'as {Szamuely}.
\newblock {\em {Central simple algebras and Galois cohomology. 2nd revised and
  updated edition.}}, volume 165.
\newblock Cambridge: Cambridge University Press, 2nd revised and updated
  edition edition, 2017.

\bibitem[{Mor}11]{Mor11}
Fabien {Morel}.
\newblock {On the Friedlander-Milnor conjecture for groups of small rank.}
\newblock In {\em {Current developments in mathematics, 2010}}, pages 45--93.
  Somerville, MA: International Press, 2011.

\bibitem[{Mor}12]{Mor12}
F.~{Morel}.
\newblock {\em {\(\mathbb A^1\)-algebraic topology over a field.}}, volume
  2052.
\newblock Berlin: Springer, 2012.

\bibitem[{Ros}96]{Rost96}
M.~{Rost}.
\newblock {Chow groups with coefficients.}
\newblock {\em {Doc. Math.}}, 1:319--393, 1996.

\end{thebibliography}


\end{document}